\documentclass[a4paper,12pt]{article}
\usepackage[hidelinks]{hyperref}
\usepackage{pifont}
\usepackage{amsmath,amsthm,amssymb,amsfonts}
\usepackage{longtable}
\usepackage{multirow}
\usepackage{leftidx}
\usepackage[noend]{algpseudocode}
\usepackage{algorithmicx,algorithm}
\usepackage{bbm,bm}
\usepackage{latexsym}
\usepackage{mathrsfs}
\usepackage{threeparttable}
\usepackage{tabularx}
\usepackage{booktabs}
\usepackage{multicol,graphics}
\usepackage[dvips]{graphicx}
\usepackage{rotating}
\usepackage{epstopdf}
\usepackage{epsfig}
\usepackage{subfigure}
\usepackage{indentfirst}
\usepackage{geometry}
\usepackage{caption}
\usepackage{lineno}
\usepackage{enumerate,mdwlist}
\usepackage[numbers,sort&compress]{natbib}  
\usepackage[utf8]{inputenc}
\usepackage{float}
\theoremstyle{definition}
\newtheorem{The}{Theorem}[section]    
\newtheorem{Lem}[The]{Lemma}
\newtheorem{Cor}[The]{Corollary}
\newtheorem{Pro}[The]{Proposition}

\newtheorem{Cla}{Claim}
\newtheorem{Subcla}{Claim}[subsection]

\newtheorem{Con}[The]{Conjecture}
\newtheorem{Cas}{Case}

\numberwithin{equation}{section}

\begin{document}
\title{A chain theorem for planar PM-compact bricks}
\author{Yaxian Zhang$^1$, Qiuli Li$^2$,  Fuliang Lu$^3$\thanks{Corresponding author.\newline
   {\small E-mails:\ yaxianzhang@sicnu.edu.cn(Y. Zhang), qlli@lzu.edu.cn(Q. Li), flianglu@163.com(F. Lu)}}}
\date{
   {\small 1. School of Mathematical Sciences, Sichuan Normal University, Chengdu, 610068,  China}\\
   {\small 2. School of Mathematics and Statistics, Lanzhou University, Lanzhou, 730000, China}\\
  {\small 3. School of Mathematics and Statistics $\&$ Fujian Key Laboratory of Granular Computing and Applications, Minnan Normal University, Zhangzhou, 363000, China}
}
\maketitle
\begin{abstract}
   A connected graph $G$ is matching covered if it contains at least one edge and every edge belongs to some perfect matching of $G$.
   A matching covered graph is a brick if it is 3-connected and after removing any two vertices, the resulting graph has a perfect matching.   Bricks are the building blocks of matching covered graphs.

A brick $G$ is extremal if the incidence vectors of all perfect matchings are linearly independent over $\mathbb{R}$.
 Zhou et al. [On planar extremal nonsolid bricks, J. Graph Theory, 2026] proved that for every  planar extremal  brick $G$, there exists a sequence of  planar extremal  bricks $G_0,G_1,\dots,G_n$ such that the underlying graph of $G_n$ is an odd wheel, $G_0=G$, and  each $G_i$ $(0\leq i < n)$ has a triangle $T_i$ such that $G_i/T_i = G_{i+1}$, where $G_i/T_i$ means contracting the triangle $T_i$ into a single vertex.
 
 In this paper, the result of Zhou et al. is generalized to planar PM-compact bricks, a larger class of bricks including extremal bricks. We present a chain theorem for planar PM-compact bricks: for every planar PM-compact brick $G$, there exists a sequence of planar PM-compact bricks $G_0,G_1,\dots,G_n$ such that the underlying graph of $G_n$ is an odd wheel, $G_0=G$, and each $G_i$ $(0\leq i < n)$ contains a triangle $T_i$ such that $G_i/T_i = G_{i+1}$.
 \vskip 0.1 in
 {
 \noindent {\bf Keywords:} \ Matching covered graph; PM-compact brick; Extremal brick; Chain theorem
 }\medskip
\end{abstract}
\section{Introduction}
All graphs in this paper are finite and contain no loops (multiple edges are allowed).
Given a graph, its \emph{underlying graph} is the simple graph obtained by replacing each set of multiple edges with a single edge.
Let $G$ be a connected graph with vertex set $V(G)$ and edge set $E(G)$.
An edge subset of $E(G)$ is called a \emph{matching} of a graph $G$ if no two edges in the subset share a common vertex.
A matching $M$ of a graph $G$ is called \emph{perfect} if $|M|=\frac{|V(G)|}{2}$. A graph is \emph{matchable} if it has a perfect matching. In particular, the graph with no vertices is regarded as matchable.
A connected graph is \emph{matching covered} if it contains at least one edge,
and each edge belongs to some perfect matching.
For the terminology that is specific to matching covered graphs,
we follow Lov\'asz and Plummer \cite{Lovasz1986}.

For a nonempty subset $X\subset V(G)$,
we call $\partial_G(X)$ (simply $\partial(X)$ if $G$ is understood) an \emph{edge cut} of $G$, which is the set of edges with one end vertex
in $X$, the other in $\overline{X}=V(G)\setminus X$.
In particular, if $|X|=1$ or $|\overline{X}|=1$, then $\partial(X)$ is called \emph{trivial}; otherwise, it is \emph{nontrivial}.
An edge cut of $G$ is called a \emph{tight cut},
if it intersects each perfect matching of $G$ at exactly one edge.
A matching covered graph without nontrivial tight cuts is a \emph{brick},
if it is nonbipartite, and a \emph{brace} otherwise.

A graph $G$ with at least four vertices is called \emph{bicritical}, if for any two distinct vertices $x$ and $y$ in $G$, $G-x-y$ is matchable.
Edmonds et al. \cite{Edmonds1982} showed that
a graph $G$ is a brick if and only if it is 3-connected and bicritical (see also Szigeti \cite{Szigeti2002} and de Carvalho et al. \cite{Carvalho2018}).
Lov\'asz \cite{Lovasz1987} proved that any matching
covered graph can be decomposed into a unique list of bricks and braces
up to multiple edges by a procedure called the tight cut decomposition.
Precisely, two different tight cut decompositions of a matching covered graph $G$ yield the same number of bricks, which is
called the brick number of $G$ and denoted by $b(G)$.

The \emph{incidence vector} of a perfect matching $M$ in a graph $G$ is a vector in $\{0,1\}^{E(G)}$ with entry $1$ for $e\in M$ and $0$ otherwise.
A matching covered graph $G$ is called \emph{extremal} if the incidence vectors of all perfect matchings are linearly independent over $\mathbb{R}$.
For an odd integer $k\geq 3$, an \emph{odd wheel} $W_k$ is the graph obtained from a cycle $R$ of length $k$ by adding a new vertex $h$ and joining it to all the vertices of $R$. The cycle $R$ is the \emph{rim} of $W_k$,
the vertex $h$ is its \emph{hub}, and the edges incident with $h$ are its \emph{spokes}.
Let $K_n$ be a complete simple graph with $n$ vertices. Then $W_3\cong K_4$.
Note that each vertex of $W_3$ can be regarded as its hub, and the odd wheel $W_k$ with $k\geq 5$ has a unique hub.
Odd wheels are extremal.

\begin{figure}[h]
   \centering
   \includegraphics[scale=0.35]{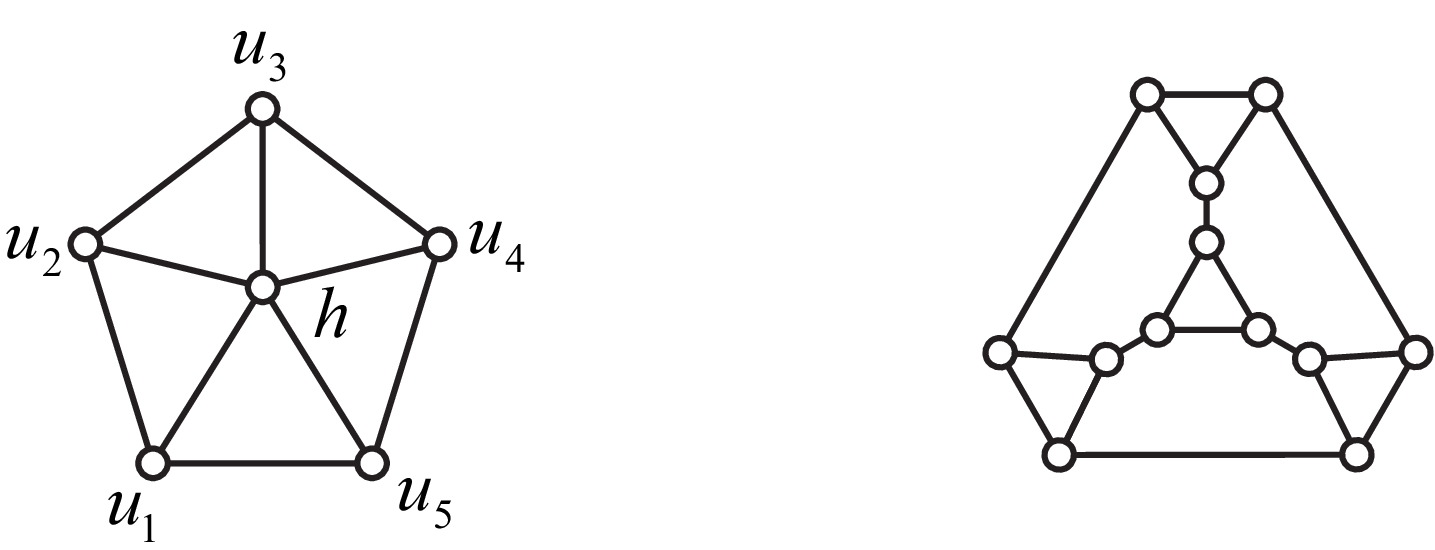}
   \caption{\label{f2}Left: an odd wheel $W_5$; Right: the graph $K_4^t$.}
\end{figure}

When $H$ is a subgraph of a graph $G$, we use $H$ to denote both the subgraph and its vertex set; thus, both $G/V(H)$ and $G/H$ denote the graph obtained by contracting $H$ to a single vertex, deleting any resulting loops while retaining all multiple edges.
De Carvalho et al.  conjectured a chain theorem for  extremal bricks, and showed the conjecture holds for cubic bricks \cite{Carvalho2004}.

\begin{Con}
   \label{Con1}
   (\cite{Carvalho2004})
   For every extremal brick $G$ other than the Petersen graph, there exists a sequence of extremal  bricks $G_0,G_1,\dots,G_n$ such that the underlying graph of $G_n$ is an odd wheel, $G_0=G$, and  each $G_i$ $(0\leq i < n)$ has a triangle $T_i$ such that  $G_i/T_i = G_{i+1}$.
\end{Con}

Feng and Lu \cite{FengLu2021} constructed an infinite family of nonplanar counterexamples to Conjecture \ref{Con1}. Zhou et al. \cite{Zhou2026} showed that Conjecture \ref{Con1} holds for planar extremal bricks (note that the Petersen graph is not planar).
\begin{The}
   \label{Th-extremal}
   (\cite{Zhou2026})
   For every planar extremal brick $G$, there exists a sequence of planar extremal  bricks $G_0,G_1,\dots,G_n$ such that the underlying graph of $G_n$ is an odd wheel, $G_0=G$, and  each $G_i$ $(0\leq i < n)$ has a triangle $T_i$ such that  $G_i/T_i = G_{i+1}$.
\end{The}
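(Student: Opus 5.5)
The plan is an induction on $|V(G)|$, with the whole argument resting on one reduction lemma. The lemma I would aim for is: \emph{every planar extremal brick $G$ whose underlying graph is not an odd wheel contains a triangle $T$ such that $G/T$ is again a planar extremal brick.} Once this holds, the chain in Theorem~\ref{Th-extremal} is built greedily. Contracting a triangle keeps planarity and lowers $|V(G)|$ by two, so the process must terminate. The only place it can stop is at a brick whose underlying graph is an odd wheel; $K_4=W_3$ is the smallest brick, so termination at an odd wheel is forced.

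For the lemma, I would first gather structural facts about extremal bricks. The number of perfect matchings of $G$ is at most $\dim\mathrm{Lin}(G)$. By Edmonds--Lov\'asz--Pulleyblank, that dimension equals $m-n+2-b(G)=m-n+1$ for a brick. Extremality forces equality, and this equality is inherited under suitable operations. The key observation is about a triangle $T$ with $\partial(T)$ consisting of exactly three edges, one at each vertex of $T$ (for instance, when each vertex of $T$ has degree $3$). In that case every perfect matching of $G$ meets $\partial(T)$ in exactly one edge. Hence $\partial(T)$ is a tight cut, the $T$-contraction $G/\overline{T}$ is $K_4$ with multiplicities, and $G/T$ is its other cut-contraction. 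Perfect matchings of $G$ then correspond bijectively to those of $G/T$. Edge counts drop by three and vertex counts by two, so $m-n+1$ drops by one. This is consistent with the extra triangle edge being absorbed, and extremality of $G/T$ follows. Planarity is automatic. The remaining burden is therefore to show that $G/T$ is a brick, i.e.\ $3$-connected and bicritical, for a well-chosen triangle.

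The existence step uses planarity. I would start from the known structure of extremal bricks: they have many vertices of degree $3$, since for extremal graphs $m-n+1$ equals the number of perfect matchings. Using Euler's formula for a planar brick, I would locate a face, or a configuration near a degree-$3$ vertex, that yields a triangle with three independent cut edges. Concretely, I would argue by contradiction via a minimal counterexample. Suppose no triangle contraction works. Then by the removable-edge and ear-decomposition theory (Lov\'asz, de Carvalho--Lucchesi--Murty), $G$ has a removable edge $e$ with $G-e$ a near-brick. Extremality forces $G-e$ to be extremal in a controlled way, and induction on $G-e$ (or on its brick) gives a contractible triangle there. One then checks that this triangle survives in $G$, using planarity to control where $e$ can lie relative to $T$.

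The main obstacle is proving that $G/T$ is a brick, in particular excluding $2$-vertex cuts and barriers that arise after contraction. A barrier $B$ of $G/T$ containing the contracted vertex $t$ would lift to a set $B-t+V(T)$ or $B-t+x$ in $G$. These lifts produce either a nontrivial tight cut in $G$ or violate bicriticality. I would handle this by choosing $T$ to minimize a suitable parameter, such as the size of the side of a potential tight cut. I would also exploit the fact that in planar bricks, nontrivial barriers after contraction correspond to separating cycles in the dual. Those cycles can be rerouted to find a better triangle, contradicting minimality.
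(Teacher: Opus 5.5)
Your outer framework is fine: find a triangle $T$ with $G/T$ a planar extremal brick, then iterate until an odd wheel. But neither ingredient of your reduction lemma holds as written.

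\textbf{The ``key observation'' is false.} In a brick with at least six vertices, $\partial(V(T))$ is a nontrivial cut and hence \emph{not} tight. So when the three cut edges are independent, some perfect matching uses all three of them; in $\overline{C_6}$ the triangle cut is itself a perfect matching. There is therefore no bijection between $\mathcal{M}(G)$ and $\mathcal{M}(G/T)$. Worse, if there were one, then by your own count $G/T$ would have $m-n+1$ perfect matchings whose incidence vectors span a space of dimension only $m-n$, so $G/T$ would \emph{fail} to be extremal.

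This step can be repaired without any degree hypothesis. For an arbitrary triangle $T=t_1t_2t_3$, fix an edge $f_i$ of $T$ joining the two vertices other than $t_i$. For $N\in\mathcal{M}(G/T)$, let $i(N)$ be the index such that the edge of $N$ at the contraction vertex ends at $t_{i(N)}$ in $G$, and lift $N$ to $\hat N=N\cup\{f_{i(N)}\}$. Suppose $\sum_N c_N\chi_N=0$. Then $\sum_N c_N\chi_{\hat N}$ vanishes on every edge other than $f_1,f_2,f_3$. Its coordinate at $f_i$ is $\sum_{N:\,i(N)=i}c_N$, which equals the sum of the coordinates of the original relation over the cut edges at $t_i$, and so is $0$. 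Hence linear independence passes from $\mathcal{M}(G)$ to $\mathcal{M}(G/T)$, and $G/T$ is extremal as soon as it is a brick.

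\textbf{The real gap is existence.} Nothing in the proposal produces a triangle $T$ with $G/T$ a brick, and that is the entire content of the theorem. You derive no triangle from $|\mathcal{M}(G)|=m-n+1$ and Euler's formula, let alone one whose contraction is $3$-connected and bicritical. The removable-edge induction does not transfer. The brick of $G-e$ is obtained by bicontracting the degree-two vertices created by deleting $e$, so a contractible triangle found there need not be a triangle of $G$. Even when it is, you must still show that $G/T$ is a brick, and your ``one then checks'' is exactly that unproved step. The barrier-minimization and dual-cycle rerouting name no parameter and no rerouting rule.

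\textbf{What a working mechanism looks like.} The paper does not reprove this statement; it quotes it from Zhou et al. Its proof of the generalization, Theorem~\ref{MaT}, is the relevant comparison. Lemma~\ref{C2.9} and Lemma~\ref{OW} give a robust cut $\partial(X)$ for which $G/\overline{X}$ is an odd wheel and $G/X$ is a smaller planar brick in the same class (Theorem~\ref{MKCo}, with closure under Lemma~\ref{SC}). Induction then supplies a triangle in $G/X$. That triangle is transferred back to $G$ using Theorem~\ref{2.1} together with PM-compactness and planarity arguments, and the exceptional configurations $G^*$ and $G^{\triangle}$ are handled separately. Since extremal bricks are PM-compact (Proposition~\ref{EAP}), Theorem~\ref{MaT} combined with the lifting argument above yields the statement you want. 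Your proposal contains no substitute for this structural step.
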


A matchable graph $G$ is \emph{PM-compact} if for each pair of vertex-disjoint even cycles $C_1$ and $C_2$, $G-V(C_1)-V(C_2)$ is not matchable. For background on PM-compact graphs, see Balinski and Russakoff \cite{Balinksi1974} and Chv\'atal \cite{Chvatal1975}. Several characterizations of special PM-compact graphs are known.
 Wang et al. \cite{WXM2013} characterized PM-compact graphs that are bipartite (see the following Theorem \ref{3.1}) or near-bipartite, whereas Wang et al. \cite{WXM2014} presented all the PM-compact graphs that are cubic and claw-free.
 Carvalho et al. \cite{Carvalho2020} got the PM-compact Birkhoff-von Neumann graphs (including  solid bricks).
 From the definition of PM-compact graphs, we can directly obtain the following two properties.
\begin{Pro}
   \label{PM-P1}
   Let $H$ be a matchable subgraph of a PM-compact graph $G$. If $G-V(H)$ is matchable, then $H$ is PM-compact.
\end{Pro}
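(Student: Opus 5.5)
We argue directly from the definition of PM-compactness. First note that $H$ is matchable by hypothesis, so it suffices to show that $H$ has no pair of vertex-disjoint even cycles $C_1,C_2$ with $H-V(C_1)-V(C_2)$ matchable. Suppose, to the contrary, that such cycles $C_1$ and $C_2$ exist in $H$.

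Since $C_1$ and $C_2$ are subgraphs of $H$, they are also vertex-disjoint even cycles of $G$. Let $M_1$ be a perfect matching of $H-V(C_1)-V(C_2)$, and let $M_2$ be a perfect matching of $G-V(H)$, which exists by hypothesis. We claim that $M_1\cup M_2$ is a perfect matching of $G-V(C_1)-V(C_2)$. Its vertex set decomposes as the disjoint union of $V(H)\setminus(V(C_1)\cup V(C_2))$ and $V(G)\setminus V(H)$, because $V(C_1)\cup V(C_2)\subseteq V(H)$. The edges of $M_1$ lie within the first part and cover it exactly. The edges of $M_2$ lie within the second part and cover it exactly. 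Hence no two edges of $M_1\cup M_2$ share a vertex, and every vertex of $G-V(C_1)-V(C_2)$ is covered.

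Thus $G-V(C_1)-V(C_2)$ is matchable, which contradicts the assumption that $G$ is PM-compact. Therefore $H$ is PM-compact.

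There is no real obstacle here. The only point requiring care is to record that $V(C_1)\cup V(C_2)\subseteq V(H)$, so that the two matchings live on disjoint vertex sets that together partition $V(G)\setminus(V(C_1)\cup V(C_2))$.
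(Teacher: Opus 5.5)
Your proof is correct and is the direct verification from the definition that the paper has in mind; the paper states this property without proof, as following directly from the definition. The key step is the same: a perfect matching of $H-V(C_1)-V(C_2)$ together with one of $G-V(H)$ gives a perfect matching of $G-V(C_1)-V(C_2)$.
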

Let $M$ be a perfect matching of a graph $G$. An \emph{$M$-alternating path} (resp., \emph{$M$-alternating cycle}) is a path (resp.,  a cycle) whose edges alternate between $M$ and $E(G)\setminus M$.

\begin{Pro}
   \label{PM-P2}
   Let $M$ be a perfect matching of a PM-compact graph $G$. Then $G$ does not contain two vertex-disjoint $M$-alternating cycles.
\end{Pro}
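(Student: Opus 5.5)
The plan is to argue by contradiction and reduce directly to the definition of PM-compactness. Suppose $C_1$ and $C_2$ are vertex-disjoint $M$-alternating cycles in $G$. These two cycles will play the role of the pair of vertex-disjoint even cycles in the definition. It then suffices to show that $G-V(C_1)-V(C_2)$ is matchable.

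First, each $M$-alternating cycle is even, since its edges alternate between $M$ and $E(G)\setminus M$ around the cycle. Hence $C_1$ and $C_2$ are vertex-disjoint even cycles.

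Second, consider a vertex $v\in V(C_i)$. It is incident with exactly one edge of $M$, because $M$ is perfect. It is also incident with exactly one $M$-edge of $C_i$, by the alternation. So the $M$-edge covering $v$ lies on $C_i$.

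Consequently, $M\cap E(C_1)$ is a perfect matching of $C_1$, and $M\cap E(C_2)$ is a perfect matching of $C_2$. Moreover, no edge of $M$ joins $V(C_1)\cup V(C_2)$ to the remaining vertices. Therefore $M\setminus (E(C_1)\cup E(C_2))$ is a perfect matching of $G-V(C_1)-V(C_2)$. This includes the degenerate case where no vertices remain, which the paper regards as matchable.

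This contradicts the PM-compactness of $G$. The only point requiring any care is this restriction step, namely that $M$ splits cleanly along the cycles; I do not expect any real obstacle.
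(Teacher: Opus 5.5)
Your argument is correct, and it is the direct verification the paper intends; the paper gives no proof and calls this property immediate from the definition. $M$-alternating cycles are even, and $M\setminus(E(C_1)\cup E(C_2))$ is a perfect matching of $G-V(C_1)-V(C_2)$, which contradicts PM-compactness.
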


The inclusion relationship between extremal graphs and PM-compact graphs was established in \cite{Deng2021}. We provide a shorter proof here.
\begin{Pro}
   (\cite{Deng2021})
   \label{EAP}
Every extremal graph is PM-compact.
\end{Pro}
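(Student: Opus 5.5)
We argue by contraposition: assuming $G$ is not PM-compact, we exhibit a nontrivial linear dependence among the incidence vectors of its perfect matchings, so $G$ is not extremal. Since extremality is defined only for matchable graphs, we may assume $G$ is matchable.

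By the definition of PM-compact, there are vertex-disjoint even cycles $C_1$ and $C_2$ such that $G-V(C_1)-V(C_2)$ has a perfect matching $N$. Each even cycle $C_i$ splits into two complementary perfect matchings of $C_i$; call them $A_i$ and $B_i$. Set $A_i$ to be the edges of $C_i$ in odd positions and $B_i=E(C_i)\setminus A_i$. These are disjoint and nonempty.

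We then form the four perfect matchings of $G$
\[
M_{11}=N\cup A_1\cup A_2,\quad M_{12}=N\cup A_1\cup B_2,\quad M_{21}=N\cup B_1\cup A_2,\quad M_{22}=N\cup B_1\cup B_2 .
\]
Because $A_i$ and $B_i$ are disjoint and nonempty, these four sets are pairwise distinct. Writing $\chi$ for incidence vectors, we have $\chi^{M_{11}}+\chi^{M_{22}}=2\chi^N+\chi^{A_1}+\chi^{B_1}+\chi^{A_2}+\chi^{B_2}=\chi^{M_{12}}+\chi^{M_{21}}$. Hence $\chi^{M_{11}}-\chi^{M_{12}}-\chi^{M_{21}}+\chi^{M_{22}}=0$. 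This is a nontrivial linear relation among incidence vectors of distinct perfect matchings, so $G$ is not extremal, which contradicts the hypothesis.

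The only point that needs care is the distinctness of the four matchings, which follows immediately from $A_i\cap B_i=\emptyset$ and $A_i,B_i\neq\emptyset$. There is no real obstacle beyond this.
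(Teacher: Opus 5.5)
Your proof is correct and is essentially the paper's argument: from two vertex-disjoint even cycles whose removal leaves a matchable graph, you build the four perfect matchings obtained by combining $N$ with one of the two perfect matchings of each cycle, and you get the dependence $\chi^{M_{11}}+\chi^{M_{22}}=\chi^{M_{12}}+\chi^{M_{21}}$. Your explicit check that the four matchings are pairwise distinct, which is what makes this dependence nontrivial, is a small point the paper leaves implicit.
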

\begin{proof}

    Suppose, to the contrary,  that there exists an extremal graph $G$  that is not PM-compact.
    Then $G$ contains two vertex-disjoint even cycles $C_1$ and $C_2$ such that $G-V(C_1)-V(C_2)$ has a perfect matching, say $M$.
   Assume that $M_1$ and $M_2$ are the two perfect matchings of $C_1$, and $M_3$ and $M_4$ are the two perfect matchings of $C_2$.
    Let
    \[
    N_1 = M \cup M_1 \cup M_3,\quad
    N_2 = M \cup M_2 \cup M_4,\quad
    N_3 = M \cup M_1 \cup M_4,\quad
    N_4 = M \cup M_2 \cup M_3.
    \]
    For each \(i = 1,2,3,4\), let \(\chi_i\in\mathbb{R}^{E(G)}\) be the characteristic vector of \(N_i\). Then
  $\chi_1 + \chi_2 = \chi_3 + \chi_4,$
    which implies that \(\chi_1, \chi_2, \chi_3, \chi_4\) are linearly dependent.
    However, the characteristic vectors of all perfect matchings of an extremal graph are linearly independent.
    This contradiction shows that our assumption is false; hence the result follows.
    \end{proof}

The converse of Proposition \ref{EAP} does not necessarily hold.
Let $K_4^t$  be the cubic graph obtained from $K_4$ by replacing each vertex with a triangle (see  Fig. \ref{f2}). The graph $K_4^t$ is a brick and has 8 perfect matchings.
Also, $K_4^t$ is PM-compact:  it does not contain two vertex-disjoint even cycles.
But $K_4^t$ is not extremal: by Edmonds et al. \cite{Edmonds1982}, the dimension of its perfect matching polytope is $|E(K_4^t)|-|V(K_4^t)|+1-b(K_4^t)$, i.e. six. Since this polytope is the convex hull of the incidence vectors of the eight perfect matchings of $K_4^t$, these eight vectors are affinely, and hence linearly, dependent.

A sequence of bricks linked by successive triangle contractions is called a \emph{chain}. Our main result is the following chain theorem.

\begin{The}
   \label{MaT}
   Let $G$ be a planar PM-compact brick. Then there exists a sequence of planar PM-compact bricks $G_0,G_1,\dots,G_n$ such that $G_0=G$, the underlying graph of $G_n$ is an odd wheel, and each $G_i$ ($0\leq i < n$) has a triangle $T_i$ such that $G_i/T_i = G_{i+1}$.
\end{The}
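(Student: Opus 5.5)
The argument will be by induction on $|V(G)|$. Since $G/T$ has two fewer vertices than $G$, it suffices to prove the following one-step statement: \emph{if $G$ is a planar PM-compact brick whose underlying graph is not an odd wheel, then $G$ contains a triangle $T$ such that $G/T$ is again a planar PM-compact brick.}

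Planarity of $G/T$ is automatic, since contraction preserves planarity. PM-compactness should also be routine once $G/T$ is known to be matchable. Every perfect matching of $G/T$ lifts to a perfect matching of $G$: the contracted vertex $t$ is matched through an edge $uv$ with $v\in T$, and we add the edge of $T$ joining the other two vertices of $T$. Now suppose $C_1,C_2$ are disjoint even cycles of $G/T$ with $G/T-V(C_1)-V(C_2)$ matchable.
\begin{itemize}
\item If $t$ lies outside $C_1\cup C_2$, the same lifting argument gives disjoint even cycles in $G$ whose complement is matchable.
\item If $t$ lies on $C_1$, then $C_1$ lifts to a cycle of $G$ passing through one or two vertices of $T$. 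In the even case it uses two vertices, and the third vertex of $T$ must be absorbed. If $C_1$ expands by one vertex of $T$, the result is an odd cycle; we repair it by routing through the edge of $T$, which changes the parity.
\end{itemize}
I expect this parity bookkeeping to go through by choosing, among the one-vertex and two-vertex expansions, the one that gives an even cycle and leaves a single vertex of $T$ matchable to an exposed neighbour. If it does not close in general, I would fall back on Proposition~\ref{PM-P2}: fix a perfect matching $M$ and compare $M$-alternating cycles in $G$ and $G/T$.

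The real content is finding a triangle $T$ for which $G/T$ is a brick. Here I would use the structure theory of bricks: the triangle contraction $G/T$ is matching covered when $G$ is a brick, and it is a brick unless it has a nontrivial tight cut, or equivalently a $2$-separation or a barrier of size at least $2$. I would proceed in four steps, with the existence of a good triangle as the hard core.
\begin{enumerate}
\item Show that a planar PM-compact brick that is not an odd wheel contains a triangle. In a planar brick (3-connected, bicritical, hence minimum degree at least $3$), Euler's formula forces faces of small length. I would argue that PM-compactness rules out having many disjoint even faces, since two disjoint even facial cycles with matchable complement violate the definition. Bicriticality lets us delete pairs of vertices freely, which makes such complements matchable. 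The goal is that "most" faces are odd, and then, combined with the degree count, that there are triangular faces.
\item Take a triangle $T$ whose contraction is not a brick, so that $G/T$ has a nontrivial barrier $B$ or a $2$-separation. Lift $B$ to $G$. Since $G$ is bicritical, $B$ must contain the contracted vertex $t$, so $T$ together with $B-t$ forms a structured set of vertices. This yields a nontrivial odd cut $\partial(X)$ of $G$ with $T\subseteq X$ that becomes tight after contraction.
\item Choose $T$ and the associated $X$ extremally, with $X$ minimal. Working inside the shore $X$, find another triangle $T'$ contained in $X$. Minimality then gives that $G/T'$ is a brick.
\item Use planarity together with PM-compactness, in the form of Proposition~\ref{PM-P1} applied to the shore $G/\overline{X}$, to show that the shore is again a planar PM-compact brick-like graph containing a triangle.
\end{enumerate}

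This minimality argument inside a shore is the step I expect to be the main obstacle. It mimics the proof strategy of Zhou et al.\ for Theorem~\ref{Th-extremal}, and I would adapt their lemmas, replacing each use of linear independence by Proposition~\ref{PM-P1}/\ref{PM-P2}, as in the proof of Proposition~\ref{EAP}. Finally, the induction terminates only at graphs with no admissible triangle, and the argument above shows that these are exactly the graphs whose underlying graph is an odd wheel.
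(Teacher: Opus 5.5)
\paragraph{Verdict.} There is a genuine gap: the proposal does not establish the existence of a triangle whose contraction is a brick, which is the substance of the theorem.

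\paragraph{What is fine.} Your reduction to a one-step statement is correct. The PM-compactness part also works, and the parity bookkeeping you were unsure about closes:
\begin{itemize}
\item If the two edges of $C_1$ at $t$ lift to distinct vertices $v_1,v_2$ of $T$, route the cycle through $v_1v_3v_2$. This gives an even cycle that absorbs all of $T$.
\item If both edges lift to the same vertex of $T$, the length is unchanged, and the other two vertices of $T$ are matched by their edge.
\end{itemize}
This is a legitimate substitute for the paper's use of Lemma~\ref{SC} on the separating cut $\partial(V(T))$.

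\paragraph{Where it fails.} The gap is in the part you call the real content: finding a triangle $T$ with $G/T$ a brick.
\begin{itemize}
\item Your Steps 2--4 never use the hypothesis that the underlying graph of $G$ is not an odd wheel, and without it the conclusion is false. Take $W_k$ with $k\ge 5$, hub $h$ and rim $u_1\cdots u_k$. Every triangle is $T=hu_iu_{i+1}$, and $\{t,u_{i+3}\}$ is a $2$-vertex cut of $W_k/T$ separating $u_{i+2}$. So the scheme ``choose $X$ minimal, find a triangle $T'$ inside it, and minimality makes $G/T'$ a brick'' cannot be right as stated.
\item You also give no reason why the obstruction for $T'$ would lie inside $X$ rather than cross it.
\item Your final sentence is circular: nothing in the proposal shows that a brick without an admissible triangle has an odd wheel as underlying graph.
\item The same example refutes your assertion that $G/T$ is matching covered whenever $G$ is a brick. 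In $W_5/T$ with $T=hu_1u_2$, the edge $tu_4$ lies in no perfect matching, because $u_3u_5\notin E(W_5)$.
\item Step 1 is unsupported. Bicriticality only guarantees a perfect matching after deleting two vertices, not after deleting the vertex sets of two disjoint even faces, so you get no restriction on face lengths.
\end{itemize}

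\paragraph{The missing idea.} What is missing is the solid/nonsolid dichotomy, which is exactly where odd wheels enter.
\begin{itemize}
\item A planar solid brick has an odd wheel as underlying graph (Lemma~\ref{OW}).
\item A nonsolid one has the robust cut of Lemma~\ref{C2.9}. Its bipartite middle piece collapses by Lemma~\ref{SC}, Theorem~\ref{3.1} and planarity. This yields $\partial(X)$ with $G/\overline{X}$ an odd wheel up to parallel spokes and $G/X$ a smaller planar PM-compact brick (Theorem~\ref{MKCo}).
\item The paper then applies induction to $G/X$, not to $G$. It transfers the resulting triangle back to $G$ using Theorem~\ref{2.1} and the forcing-edge Lemmas~\ref{MF} and~\ref{GF} (Claims 1 and 2).
\item The wheel--wheel and hub-to-$K_4$ splicings are handled separately through the explicit configurations $G^*$ and $G^{\triangle}$ (Lemmas~\ref{W5W5}, \ref{WkPath}, \ref{GstarChain}).
\end{itemize}
Deferring to Zhou et al.\ and ``replacing linear independence by Propositions~\ref{PM-P1}/\ref{PM-P2}'' supplies none of this structure.
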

Theorem \ref{MaT} generalizes the chain theorem for planar extremal bricks (Theorem \ref{Th-extremal}) to the broader class of planar PM-compact bricks.

Section 2 introduces the definitions and preliminary results used throughout the paper. Section 3 develops auxiliary results on graphs with few perfect matchings and on bipartite PM-compact graphs, which are used in the subsequent structural analysis. Based on these results, Section 4 establishes a structure theorem for planar PM-compact bricks. Section 5 then analyzes splicings involving odd wheels and identifies the special configurations needed for the inductive argument. Finally, Section 6 combines the structure theorem with the splicing results to prove Theorem \ref{MaT}.
\section{Preliminaries}

For an edge set $X$ and a vertex set $S$ in a graph $G$, we denote by $G[X]$ and $G[S]$ the subgraphs induced by $X$ and $S$, respectively.
We denote by $E_G[S]$ (or simply $E[S]$) the set of edges with both end vertices in $S$.
We denote by $N_G(S)$ (or simply $N(S)$) the set of vertices in $V(G)\setminus S$ that are adjacent to at least one vertex in $S$. When $S = \{v\}$, we write $N_G(v)$ for $N_G(\{v\})$.  Let $u$ be a vertex of a graph $G$. The \emph{degree} of $u$, denoted by $d_G(u)$, is the number of edges incident with $u$, counted with multiplicity.
Let $\delta(G)$ and $\Delta(G)$ denote the minimum and maximum degrees of $G$, respectively.

For two disjoint nonempty vertex subsets $S_1$ and $S_2$ of $G$, let $E_G[S_1,S_2]$ (or simply $E[S_1,S_2]$) denote the set of edges between $S_1$ and $S_2$.
In particular, when $S_1\cup S_2=V(G)$, the set $E[S_1,S_2]$ is an edge cut of $G$.
For two disjoint subgraphs $H_1$ and $H_2$, we write $E[H_1,H_2]$ for $E[V(H_1),V(H_2)]$.

For a path $P$, the two vertices of degree one in $P$ are its {\em end vertices}, and the other vertices of $P$ are its {\em internal vertices}.
For two different vertices $u $ and $v$ in $V(P)$, $uPv$ denotes the subpath of $P$ with end vertices $u$ and $v$.
Let $C$ be a cycle in a plane graph, $u_i $ and $ u_{i+1}$ be two vertices on $C$.
We write $u_i C u_{i+1}$ for the subpath of $C$ with end vertices $u_i$ and $u_{i+1}$ including all the vertices on $C$ between $u_i$ and $u_{i+1}$  when traversing $C$ in clockwise direction.

Let $G$ and $H$ be two vertex-disjoint graphs and let $u\in V(G)$ and $v\in V(H)$ satisfy
$d_G(u)=d_H(v)$. For a fixed bijection $\theta:\partial_G(u)\rightarrow \partial_H(v)$, define
$(G(u)\odot H(v))_{\theta}$ to be the graph obtained from the union of $G-u$ and $H-v$ by joining,
for each edge $e\in \partial_G(u)$, the end of $e$ in $V(G)\setminus\{u\}$ to the end of
$\theta(e)$ in $V(H)\setminus\{v\}$.
When the bijection is not specified, $G(u)\odot H(v)$ denotes the family of all
graphs $(G(u)\odot H(v))_{\theta}$ obtained as $\theta$ ranges over all bijections
from $\partial_G(u)$ to $\partial_H(v)$.

Let $G$ be a graph and let $C=\partial(X)$ be an edge cut of $G$. We call $G/X$ and
$G/\overline{X}$ the \emph{$C$-contractions} of $G$. If
the contraction vertex in $G/X$ is $x$,  we also write $G/X\rightarrow x =G/X$.
For a matching covered graph $G$, an edge cut $C$ is called
\emph{separating} if both $C$-contractions of $G$ are also matching covered. Note that a tight cut
must be separating.

\begin{Lem}
   (\cite{Carvalho2004})
   \label{SM}
   A splicing of two matching covered graphs is also matching covered.
\end{Lem}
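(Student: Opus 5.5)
We must show that a splicing $G' = (G(u)\odot H(v))_\theta$ of two matching covered graphs $G$ and $H$ is matching covered. Let $X = V(G)\setminus\{u\}$ and $\overline{X} = V(H)\setminus\{v\}$, viewed inside $G'$. Then $C=\partial_{G'}(X)$ is an edge cut of $G'$. The cut edges correspond bijectively to $\partial_G(u)$, and via $\theta$ to $\partial_H(v)$. Moreover, $G'/\overline{X}\cong G$ and $G'/X\cong H$, with the contraction vertices playing the roles of $u$ and $v$. The plan is to build perfect matchings of $G'$ by gluing perfect matchings of $G$ and $H$ that agree on the cut, and to check connectivity separately.

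\textbf{Step 1: connectivity.} Since $G$ is matching covered and connected, each component of $G-u$ contains a neighbour of $u$; otherwise that component would already be a component of $G$. The same holds for $H-v$. Every vertex of $X$ is therefore joined inside $G-u$ to an end of some cut edge, and that cut edge leads into $\overline{X}$. Symmetrically for $\overline{X}$. Since $C\neq\emptyset$ (because $d_G(u)\ge1$), any two vertices of $G'$ are connected, and $G'$ has an edge.

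\textbf{Step 2: gluing.} Let $e\in C$. Its images $e_G\in\partial_G(u)$ and $e_H=\theta(e_G)\in\partial_H(v)$ lie in perfect matchings $M_G$ of $G$ and $M_H$ of $H$, respectively, because $G$ and $H$ are matching covered. Each perfect matching meets a trivial cut in exactly one edge, so $M_G\cap\partial_G(u)=\{e_G\}$ and $M_H\cap\partial_H(v)=\{e_H\}$. Then
\[
M=(M_G\setminus\{e_G\})\cup(M_H\setminus\{e_H\})\cup\{e\}
\]
is a perfect matching of $G'$. Indeed, $M_G\setminus\{e_G\}$ covers $X$ minus the end of $e$ in $X$, $M_H\setminus\{e_H\}$ covers $\overline{X}$ minus the end of $e$ in $\overline{X}$, and $e$ covers those two remaining vertices. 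So every cut edge lies in a perfect matching of $G'$.

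\textbf{Step 3: the other edges.} Take an edge $f$ with both ends in $X$; it is an edge of $G-u$. Choose a perfect matching $M_G$ of $G$ containing $f$. Let $e_G$ be the unique edge of $M_G$ in $\partial_G(u)$, and pick a perfect matching $M_H$ of $H$ containing $\theta(e_G)$. The gluing in Step 2 gives a perfect matching of $G'$ containing $f$. Edges with both ends in $\overline{X}$ are handled symmetrically.

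No step is a real obstacle. The only points needing care are that multiple edges at $u$ or $v$ are tracked through the bijection $\theta$ edge by edge rather than by end vertices, and the connectivity argument in Step 1.
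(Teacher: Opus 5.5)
The paper gives no proof of this lemma; it is quoted from de Carvalho, Lucchesi and Murty. Your Steps 2 and 3 are the standard gluing argument and are correct. There is, however, a genuine gap in Step 1.

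\textbf{The connectivity step does not follow.} The facts that every vertex of $X$ reaches $\overline{X}$, that every vertex of $\overline{X}$ reaches $X$, and that $C\neq\emptyset$ do not imply that $G'$ is connected. Suppose $G-u$ had components $A_1,A_2$ and $H-v$ had components $B_1,B_2$. If $\theta$ sent the edges from $u$ into $A_i$ to edges from $v$ into $B_i$, then $G'$ would split into $A_1\cup B_1$ and $A_2\cup B_2$.

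Your argument uses only that $G$ and $H$ are connected, and that is genuinely not enough. Take $G$ to be the path $a_1a_2ub$ and $H$ the path $c_1c_2vd$, with $\theta(a_2u)=c_2v$ and $\theta(ub)=vd$. Both graphs are connected and matchable, and every component of $G-u$ and of $H-v$ contains a neighbour of the deleted vertex. Your Step 1 reasoning applies word for word. Yet the splicing has edges $a_1a_2,a_2c_2,c_2c_1,bd$ and is disconnected. So the matching covered hypothesis must enter the connectivity argument, and in your Step 1 it never does.

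\textbf{The missing idea.} If $G$ is matching covered, then $G-u$ is connected; equivalently, matching covered graphs on at least four vertices are $2$-connected. Here is the argument.

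Suppose $G-u$ has components $K_1,\dots,K_m$ with $m\geq 2$. In any perfect matching $M$ of $G$, the vertex $u$ is matched into one component $K_j$. Every other component is perfectly matched internally, hence even, and $K_j$ is odd. So $K_j$ is the unique odd component of $G-u$, and it does not depend on $M$.

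Since $G$ is connected, $u$ has a neighbour $w$ in some $K_i$ with $i\neq j$. The edge $uw$ then lies in no perfect matching, contradicting that $G$ is matching covered.

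\textbf{Completing Step 1.} Once $G-u$ is connected, all of $X$ lies in one component of $G'$. Each vertex of $\overline{X}$ is joined inside $H-v$ to some neighbour $w$ of $v$. The edge of $G'$ corresponding to $vw$ via $\theta^{-1}$ joins $w$ to a vertex of $X$. Hence $G'$ is connected, and with this addition your proof is complete.
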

\begin{Pro}
   \label{2.4}
   (\cite{Carvalho2004})
   A splicing of two bicritical graphs is also bicritical.
\end{Pro}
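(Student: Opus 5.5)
We have to prove Proposition \ref{2.4}: if $G$ and $H$ are bicritical, $u\in V(G)$, $v\in V(H)$ with $d_G(u)=d_H(v)$, and $\theta$ is a bijection, then $K=(G(u)\odot H(v))_\theta$ is bicritical. The plan is to take two distinct vertices $x,y$ of $K$, find a perfect matching of $K-x-y$ by combining a near-perfect matching from one side with one from the other, and split into cases by where $x$ and $y$ lie. Note first that $|V(K)|=|V(G)|+|V(H)|-2\geq 6$, since $G$ and $H$ each have at least four vertices. Bicriticality also gives that $G-u$ and $H-v$ are factor-critical: for any $w\neq u$, $G-u-w$ is matchable.

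\textbf{Case 1: both $x$ and $y$ lie in $V(G)\setminus\{u\}$.}
1. Since $G$ is bicritical, $G-x-y$ has a perfect matching $M_G$.
2. $M_G$ covers $u$ by some edge $e\in\partial_G(u)$; write $e=uw$.
3. Let $\theta(e)=vz$. Since $H$ is bicritical it is matchable, and it is matching covered: every edge $vz$ lies in a perfect matching, because $H-v-z$ is matchable. So there is a perfect matching $M_H$ of $H$ containing $vz$.
4. Then $(M_G\setminus\{e\})\cup(M_H\setminus\{vz\})\cup\{wz\}$ is a perfect matching of $K-x-y$, where $wz$ is the edge of $K$ corresponding to $e$.

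The case where both $x$ and $y$ lie in $V(H)\setminus\{v\}$ is symmetric.

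\textbf{Case 2: $x\in V(G)\setminus\{u\}$ and $y\in V(H)\setminus\{v\}$.}
1. Since $G$ is bicritical, $G-u-x$ has a perfect matching $M_1$.
2. Since $H$ is bicritical, $H-v-y$ has a perfect matching $M_2$.
3. $M_1\cup M_2$ is a perfect matching of $K-x-y$, with no cut edges used. The parity works out because $|V(G)|$ is even, so $|V(G)\setminus\{u,x\}|$ is even, and likewise on the $H$ side.

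The only step requiring care is Case 1, where we need that every edge of $H$ at $v$ extends to a perfect matching of $H$. This follows directly from bicriticality as noted in step 3, so I expect no real obstacle. I also note that connectivity of $K$ is not required by the definition of bicritical, so no further argument is needed there.
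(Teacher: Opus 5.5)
Your proof is correct and complete. Case 1 uses bicriticality of $H$ to extend $\theta(e)$ to a perfect matching of $H$, and Case 2 combines perfect matchings of $G-u-x$ and $H-v-y$. The paper gives no proof of its own and only cites de Carvalho et al.; your case analysis is the standard argument for this fact.
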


The splicing operation does not necessarily preserve 3-connectivity. Thus, a splicing of two bricks need not be a brick. For a graph $G$, let $X\subseteq V(G)$ and $F\subseteq E(G)$. We say $X$ \emph{covers} $F$ if each edge of $F$ is incident with at least one vertex in $X$.

\begin{The}
   \label{2.1}
   (\cite{Carvalho2004})
   For $i=1,2$, let $G_i$ be a brick, $x_i\in V(G_i)$ and $G\in G_1(x_1)\odot G_2(x_2)$.
   Then $G$ is a brick if and only if no pair of vertices of $G$, one in $V(G_1)\setminus \{x_1\}$ and the other
   in $V(G_2)\setminus \{x_2\}$, covers the set of edges in $\partial_G(V(G_1)\setminus \{x_1\})$.
\end{The}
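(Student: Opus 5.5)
The plan is to reduce the statement to a question about $3$-connectivity and then settle that by a direct cut argument. Write $X=V(G_1)\setminus\{x_1\}$ and $Y=V(G_2)\setminus\{x_2\}$, so that $C=\partial_G(X)=\partial_G(Y)$ is exactly the set of edges arising from the bijection between $\partial_{G_1}(x_1)$ and $\partial_{G_2}(x_2)$.

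\emph{Reduction.} Since bricks are matching covered and bicritical, Lemma \ref{SM} and Proposition \ref{2.4} show that $G$ is matching covered and bicritical. By the characterization of Edmonds et al.\ (a graph is a brick if and only if it is $3$-connected and bicritical), $G$ is a brick if and only if $G$ is $3$-connected. So the whole task is to show that $G$ is $3$-connected if and only if no pair $u\in X$, $v\in Y$ covers $C$.

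\emph{Necessity.} Suppose $u\in X$ and $v\in Y$ cover $C$. Each brick has at least four vertices, so $|X|,|Y|\ge 3$, and hence $X-u$ and $Y-v$ are both nonempty. Every edge between them would lie in $C$, but every edge of $C$ meets $u$ or $v$. Therefore $G-u-v$ is disconnected and $G$ is not $3$-connected.

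\emph{Sufficiency.} Assume no such covering pair exists, and let $a,b$ be any two vertices of $G$. We show that $G-a-b$ is connected, splitting into cases by where $a$ and $b$ lie.

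If $a\in X$ and $b\in Y$, then $G_1-a$ is $2$-connected, so $G_1-a-x_1=G[X]-a$ is connected. Likewise $G[Y]-b$ is connected. By hypothesis some edge of $C$ avoids both $a$ and $b$, and it joins these two connected pieces.

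If $a,b\in X$ (the case $a,b\in Y$ is symmetric), then $G[Y]=G_2-x_2$ is connected. Moreover $G_1-a-b$ is connected and contains $x_1$ together with at least one vertex of $X$. Hence every component of $G[X]-a-b$ contains a neighbour of $x_1$ in $G_1-a-b$. Such an edge of $G_1$ corresponds to an edge of $C$ that joins that component to $Y$. So all components are attached to the connected set $Y$, and $G-a-b$ is connected.

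In every case $G$ is $3$-connected (it also has at least four vertices), so $G$ is a brick. The only point needing care is the case with both vertices on one side, specifically the use of connectivity of $G_1-a-b$ through $x_1$ to produce cut edges avoiding $a$ and $b$. The remaining steps are immediate from the cited results.
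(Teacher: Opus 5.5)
The paper gives no proof of this theorem; it quotes it from \cite{Carvalho2004}. Your proof is correct and complete, and it is the natural argument.

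Proposition \ref{2.4} shows that $G$ is bicritical. The Edmonds et al.\ characterization (a graph is a brick if and only if it is $3$-connected and bicritical) then reduces the question to whether $G$ is $3$-connected. Your case analysis on where the two deleted vertices lie settles exactly that. In particular, the case $a,b\in X$ correctly uses the connectivity of $G_1-a-b$ through $x_1$ to attach every component of $G[X]-a-b$ to the connected set $Y$.

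Two minor remarks:
\begin{itemize}
\item Lemma \ref{SM} is not needed, because the characterization you use requires only $3$-connectivity and bicriticality.
\item The necessity direction uses only that bricks are $3$-connected, so it does not depend on the reduction at all.
\end{itemize}
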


For planar graphs, we have the following notable result.

\begin{The}[Whitney, \cite{Whitney1932}]
   \label{Wh3}
   A simple 3-connected planar graph has a unique plane embedding, up to composition with a
    homeomorphism of the sphere.
\end{The}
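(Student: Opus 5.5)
The plan is to follow the classical route via peripheral cycles: characterize the face boundaries of any plane embedding purely combinatorially, and then show that the combinatorial face structure determines the embedding up to a homeomorphism of the sphere. Throughout, let $G$ be a simple 3-connected planar graph and fix an arbitrary plane (equivalently, spherical) embedding of $G$. Call a cycle $C$ of $G$ \emph{peripheral} if it is an induced cycle and $G-V(C)$ is connected.

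\textbf{Step 1: face boundaries are cycles.} Since $G$ is 2-connected and has at least four vertices, every face boundary is a cycle. Suppose some face boundary were not a cycle. Then the boundary walk would revisit a vertex $v$. By the Jordan curve theorem, a simple closed curve through $v$ and the face separates the remaining parts of the graph. This makes $v$ a cut vertex, which is a contradiction.

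\textbf{Step 2: every facial cycle is peripheral.} Let $C$ bound a face $f$.
\begin{itemize}
\item \emph{$C$ is induced.} Suppose $C$ had a chord $xy$. Because $f$ is a face, the chord lies in the other disc bounded by $C$. Then the open arc through $f$ joining $x$ and $y$, together with the chord, forms a closed curve that meets $G$ only in $\{x,y\}$. The two arcs of $C-\{x,y\}$ each contain a vertex, since $G$ is simple so $xy$ is not parallel to an edge of $C$. These two arcs lie on different sides of the curve. Hence $\{x,y\}$ is a 2-separator, contradicting 3-connectivity.
\item \emph{$G-V(C)$ is connected.} Suppose $G-V(C)$ had two components $K_1,K_2$. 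Both lie in the disc complementary to $f$. Each $K_i$ attaches to $C$ in at least three vertices, by 3-connectivity. Planarity forces the attachments of $K_1$ to lie within a segment of $C$ between two consecutive attachments of $K_2$, or the reverse. A standard "bridge" argument then produces a 2-vertex cut, namely the ends of that segment, again a contradiction.
\end{itemize}

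\textbf{Step 3: every peripheral cycle is facial.} Let $C$ be peripheral; by Jordan, $C$ bounds two open discs $D_1$ and $D_2$. No edge joins two vertices of $C$ through either disc, since that edge would be a chord. So every edge or vertex in $D_i$ belongs to $G-V(C)$. Because $G-V(C)$ is connected and cannot cross $C$, it lies entirely in one disc, say $D_1$. Then $D_2$ contains no part of $G$, and so $D_2$ is a face bounded by $C$.

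\textbf{Step 4: the embedding is determined.} Steps 2 and 3 show that the set of face boundaries equals the set of peripheral cycles, which is an invariant of the abstract graph. This set also determines the rotation at each vertex $v$ up to reversal: the faces around $v$ are the peripheral cycles through $v$, and consecutive edges at $v$ are exactly the pairs lying on a common peripheral cycle. Given two embeddings, first map $G$ to itself by the identity, which is a homeomorphism on the drawn graph. Then, for each face, both embeddings give a closed disc bounded by the same cycle. The Jordan–Schoenflies theorem lets us extend the boundary homeomorphism across each disc. Gluing these extensions yields a homeomorphism of the sphere carrying one embedding to the other.

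The main obstacle I expect is the connectivity half of Step 2, which needs a careful bridge/overlap argument with the Jordan curve theorem. The topological extension in Step 4 is routine but relies on Schoenflies.
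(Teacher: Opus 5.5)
The paper gives no proof of this statement. It quotes Whitney's theorem as a black box and uses it only through Corollary~\ref{Bi3}, so that it can speak of \emph{the} faces of an (even) subdivision of a prism. Your proposal is a genuine, self-contained proof along the standard Tutte/Diestel route: in a 3-connected plane graph the facial cycles are exactly the induced non-separating cycles, and a Jordan--Schoenflies gluing then produces the homeomorphism. It is correct in outline. What it adds beyond the citation is an intrinsic, combinatorial description of the faces. That description is what the paper actually uses when it identifies particular facial cycles of prism subdivisions.

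The step you flag does go through, but ``the ends of that segment'' needs one more sentence when $G-V(C)$ has more than two components.
\begin{enumerate}
\item Let $u,v$ be consecutive attachments of $K_2$ such that all attachments of $K_1$ lie in the closed segment $C[u,v]$.
\item Both open arcs of $C-\{u,v\}$ are nonempty, because $K_1$ and $K_2$ each have at least three attachments.
\item A path in $G-\{u,v\}$ joining the two open arcs would contain a subpath through a chord (there are none) or through a component of $G-V(C)$ with attachments $a,b$ strictly inside the two arcs.
\item That component is not $K_2$, since $K_2$ has no attachment strictly inside $C[u,v]$. Because $u,a,v,b$ alternate on $C$, the component is skew to $K_2$, which is impossible for two bridges in the same disc.
\end{enumerate}

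Alternatively, Diestel's shortcut avoids bridge theory altogether. For distinct $x,y\in V(G)\setminus V(C)$, take three internally disjoint $x$--$y$ paths. The face $f$ lies in a face of their union that is bounded by two of them, so the third path misses $\overline{f}\supseteq C$.

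Finally, the rotation-system remark in Step~4 is not needed. What the gluing does need is that the faces of the two embeddings correspond bijectively. This holds because each peripheral cycle bounds exactly one face, since its other side contains the nonempty graph $G-V(C)$.
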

A graph $G'$ is called a \emph{subdivision} of $G$ if $G'$ is obtained from $G$ by replacing some edges with internally disjoint paths. If every such path has an even number of vertices (equivalently, odd length), we call it an \emph{even subdivision}.
The following result follows directly from Theorem \ref{Wh3}.
\begin{Cor}
   \label{Bi3}
   Each subdivision of a simple 3-connected planar graph has a unique plane embedding, up to composition with a homeomorphism of the sphere.
\end{Cor}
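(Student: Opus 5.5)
We derive Corollary \ref{Bi3} from Theorem \ref{Wh3} by observing that a subdivision and the graph it comes from have essentially the same plane embeddings. Let $H$ be a simple 3-connected planar graph and let $G$ be a subdivision of $H$. Each edge $e=uv$ of $H$ becomes a path $P_e$ in $G$ whose internal vertices have degree two. Since $H$ is 3-connected, every vertex of $H$ has degree at least three, so the \emph{branch vertices} of $G$ (those of degree at least three) are exactly the vertices of $H$. The paths $P_e$ are the maximal paths of $G$ whose internal vertices all have degree two, so this decomposition is intrinsic to $G$. Hence any homeomorphism of the sphere taking one embedding of $G$ to another also respects the vertex/path structure.

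The plan runs as follows.

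First, we relate embeddings of $G$ to embeddings of $H$. Take any plane embedding $\phi$ of $G$. Each path $P_e$ is drawn as a simple arc joining the images of $u$ and $v$. Distinct arcs $P_e$ and $P_f$ meet only at common end vertices, because the paths are internally disjoint. Forgetting the degree-two vertices therefore gives a plane embedding $\phi_H$ of $H$ whose edge $e$ is drawn as the arc $\phi(P_e)$.

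Conversely, $\phi$ is recovered from $\phi_H$ once we know where the internal vertices sit on each arc. Their cyclic order along the arc is forced by $P_e$; only their exact positions vary.

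Second, we compare two embeddings. Let $\phi_1,\phi_2$ be plane embeddings of $G$. By Theorem \ref{Wh3} there is a homeomorphism $h$ of the sphere with $h\circ \phi_{1,H}=\phi_{2,H}$ as embeddings of $H$. This means $h$ maps vertices to the corresponding vertices and each arc $\phi_1(P_e)$ onto the arc $\phi_2(P_e)$, with the same ends.

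The internal vertices of $P_e$ may land at different points of the arc under $h\circ\phi_1$ and $\phi_2$, but in the same order along it. We fix this by composing $h$ with a homeomorphism $g$ of the sphere that
\begin{enumerate}
\item fixes the image of $H$ setwise,
\item slides points along each arc $\phi_2(P_e)$ so that the images of the internal vertices move to their $\phi_2$-positions, and
\item is the identity away from a small neighbourhood of the arcs.
\end{enumerate}
Such a $g$ exists: along each arc we can take an order-preserving reparametrization fixing the ends, and extend it to a thin tubular neighbourhood of the arc. Then $g\circ h$ carries $\phi_1$ to $\phi_2$.

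The main obstacle is the phrase ``up to composition with a homeomorphism''. The key point is that plane embeddings are determined combinatorially by their face boundaries, equivalently by the rotation systems. In the combinatorial formulation, the faces of $\phi$ are exactly the faces of $\phi_H$ with each edge $e$ replaced by the path $P_e$. So uniqueness of the face structure of $H$, which is Theorem \ref{Wh3}, transfers immediately to $G$, and the topological statement follows from the standard fact that embeddings with the same facial walks are equivalent by a homeomorphism of the sphere.

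If $G$ has multiple edges this argument does not apply directly. However, a subdivision of a simple graph $H$ is again simple, so this case does not arise.
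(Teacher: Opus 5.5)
Your proposal is correct and follows the paper's route; the paper gives no proof and only remarks that Corollary \ref{Bi3} follows directly from Theorem \ref{Wh3}. Your argument is exactly the routine verification behind that remark: suppress the degree-two vertices to obtain an embedding of $H$, apply Whitney's theorem, and transfer the unique face structure (or the homeomorphism, adjusted by sliding along the arcs) back to the subdivision.
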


\section{Graphs with few perfect matchings}

Let $\mathcal{M}(G)$ be the set of perfect matchings of a graph $G$.
An edge of a matchable graph $G$ is called a \emph{forcing} edge if it belongs to a unique perfect matching of $G$. For the graph with unique perfect matching, we have the following result.

\begin{The}
   (\cite{Kotzig1959})
   \label{BE}
   If a connected graph $G$ has a unique perfect matching $M$, then $M$ contains an edge $e$ such that $G-e$ is disconnected.
\end{The}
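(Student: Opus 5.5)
The statement is Kotzig's theorem. I plan to argue by a minimal counterexample: choose a connected graph $G$ with a unique perfect matching $M$ such that no edge of $M$ is a bridge, and with $|E(G)|$ as small as possible. A uniqueness fact will be used throughout. If $G$ contains an $M$-alternating cycle $C$, then $M\triangle E(C)$ is a second perfect matching. So $G$ has no $M$-alternating cycle; in particular no edge of $M$ has a parallel edge.

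\emph{Step 1: every edge outside $M$ can be deleted without disconnecting $G$.} Suppose first that every edge $e\notin M$ disconnects $G$, so every such edge is a bridge. Every edge of $M$ is not a bridge, so it lies on some cycle $C$. Every edge of $C$ lies on $C$, so no edge of $C$ is a bridge, and hence $C$ uses only edges of $M$. A cycle needs two consecutive edges sharing a vertex, and two edges of the matching $M$ cannot share a vertex. This is a contradiction, so some edge $e=xy\notin M$ has $G-e$ connected. Then $G-e$ is connected, has fewer edges, and still has $M$ as its unique perfect matching. By minimality, some $f=ab\in M$ is a bridge of $G-e$. Since $f$ is not a bridge of $G$, the edge $e$ must join the two components $A\ni a$ and $B\ni b$ of $G-e-f$, say $x\in A$ and $y\in B$. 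Because $M-f$ perfectly matches $A-a$ and $B-b$, both $|A|$ and $|B|$ are odd.

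\emph{Step 2: build an $M$-alternating cycle through $e$ and $f$.} The cycle should be $a\to x$ in $G[A]$, then $e$, then $y\to b$ in $G[B]$, then $f$. For it to alternate, I need an $M$-alternating path in $G[A]$ from $a$ to $x$ that starts with a non-$M$ edge at $a$ and ends with an $M$-edge at $x$, and symmetrically in $B$. The case $x=a$ is allowed, with an empty path on that side. If $x=a$ and $y=b$, then $e$ is parallel to $f$, which is excluded above. Existence of such a path is equivalent to $G[A]-a-x$ having a perfect matching when $x\neq a$. So I want: in the graph $G[A]$, which has the vertex $a$ "free" and is perfectly matched elsewhere by $M$, every vertex $x$ is reachable from $a$ by an alternating path of that type. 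I would prove this by forming $G_A$ from $G[A]$ by adding a new vertex $a'$ joined only to $a$ and putting $aa'$ in the matching.

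\emph{Main obstacle.} The difficulty is that the needed path need not exist for an arbitrary $x$. So I do not expect to get the path for a fixed edge $e$; instead I will exploit the choice of $e$. Let $R_A\subseteq A$ be the set of vertices reachable from $a$ by such alternating paths, and define $R_B\subseteq B$ similarly. Suppose no edge of $G$ other than $f$ joins $R_A$ to $R_B$. The standard alternating-reachability structure, as in the Gallai--Edmonds/Tutte argument, should then produce a bridge of $G$ lying in $M$: the $M$-edge at the "boundary" of the reachable region, for instance $f$ itself when $R_A=\{a\}$. That contradicts the hypothesis. Otherwise some edge $e'\neq f$ joins $R_A$ to $R_B$; it is not in $M$, and together with $f$ it closes an $M$-alternating cycle, contradicting uniqueness.

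Making the reachable-set structure argument precise is the step I expect to take real work. I would first try a second induction on $|A|$: apply the theorem to the smaller connected graph $G_A$, which has the unique perfect matching $(M\cap E(A))\cup\{aa'\}$.
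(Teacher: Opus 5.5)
Since the paper only cites Kotzig and gives no proof of this statement, your argument has to stand on its own. As written it does not: the set-up is correct, but the case that carries the real content is left open.

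\textbf{What works.} Step 1 is fine. You take a minimal counterexample, a non-bridge $e=xy\notin M$, an $M$-edge $f=ab$ that is a bridge of $G-e$, and the odd components $A\ni a,x$ and $B\ni b,y$ of $G-e-f$. It is also correct that an $M$-alternating cycle through $e$ and $f$ finishes the proof when it exists.

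There is one slip. Since $M\cap E(G[A])$ perfectly matches $G[A]-a$, the required $a$--$x$ path exists if and only if $G[A]-x$ is matchable: take the $a$--$x$ path of the symmetric difference. By contrast, $G[A]-a-x$ has odd order and is never matchable.

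\textbf{The gap.} The unhandled case is when $G[A]-x$ (or $G[B]-y$) is not matchable, and your reachable-set discussion does not resolve it.
\begin{itemize}
\item No choice of $e$ is actually exploited. Since $e$ and $f$ are the only edges between $A$ and $B$, ``some $e'\neq f$ joins $R_A$ to $R_B$'' just means $x\in R_A$ and $y\in R_B$.
\item In the complementary case the promised $M$-bridge is asserted, not derived.
\item Your sample instance is false. $R_A=\{a\}$ forces $A=\{a\}$, because any neighbour $z\in A$ of $a$ together with its $M$-partner $z'$ puts $z'\in R_A$. Hence $x=a$, and then $f$ is not a bridge of $G$ because of $e$.
\item The fallback induction is vacuous. $aa'$ is a pendant edge of the unique perfect matching of $G_A$, so $G_A$ satisfies the theorem trivially and tells you nothing about $G[A]$.
\end{itemize}

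\textbf{The missing idea.} Apply minimality to a contraction of the \emph{other} side. Suppose $G[A]-x$ is not matchable; then $x\neq a$.

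\emph{Case $|B|\ge 3$.} Contract $B$ to a vertex $b'$. The graph $G/B$ is connected and has fewer edges than $G$. Each of its perfect matchings contains $ab'$ or $xb'$.
\begin{itemize}
\item Using $xb'$ would give a perfect matching of $G[A]-x$, which does not exist.
\item $G[A]-a$ has only the perfect matching $M\cap E(G[A])$, since two of them would give two perfect matchings of $G$.
\end{itemize}
So $G/B$ has a unique perfect matching, and by minimality one of its matching edges is a bridge of $G/B$. It is not $ab'$, because $xb'$ and the connected $G[A]$ keep $b'$ attached. So it is some $g\in M$ inside $A$. Because $G[B]$ is connected, $g$ is also a bridge of $G$, a contradiction.

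\emph{Case $|B|=1$.} Then $b=y$ has degree two with neighbours $a\neq x$. Bicontracting $b$ (contracting $\{a,b,x\}$) gives a connected graph with fewer edges whose perfect matchings correspond bijectively to those of $G$. Its unique perfect matching is the image of $M\setminus\{f\}$. A bridge in that matching is a bridge of $G$ lying in $M$, because $G[\{a,b,x\}]$ is connected and contains no edge of $M\setminus\{f\}$.

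\textbf{Finishing.} Swapping $A$ and $B$ handles the case where $G[B]-y$ is not matchable. If both $G[A]-x$ and $G[B]-y$ are matchable, your cycle $P_A+e+P_B+f$ is $M$-alternating and gives a second perfect matching. With this step added, your proof is complete.
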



\begin{Lem}
   \label{MF}
   Let $G$ be a PM-compact brick with a multiple edge $e$.
   Then $e$ is a forcing edge of $G$.
\end{Lem}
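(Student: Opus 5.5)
Let $e$ and $e'$ be two parallel edges joining vertices $u$ and $v$; the statement concerns $e$, one copy of the multiple edge. The plan is to use Proposition \ref{PM-P2} together with the bicriticality of bricks to show that $G-u-v$ has exactly one perfect matching. Every perfect matching of $G$ containing $e$ has the form $\{e\}\cup N$, where $N$ is a perfect matching of $G-u-v$, and the correspondence is bijective. So $e$ is forcing if and only if $G-u-v$ has a unique perfect matching. Since $G$ is a brick, hence bicritical, $G-u-v$ has at least one perfect matching.

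Suppose, to the contrary, that $G-u-v$ has two distinct perfect matchings $N_1$ and $N_2$. Their symmetric difference $N_1\triangle N_2$ is a nonempty disjoint union of even cycles in $G-u-v$, each alternating with respect to $N_1$. Choose one such cycle $C$. Set $M=N_1\cup\{e\}$, a perfect matching of $G$. The cycle $C$ is $M$-alternating, since it avoids $u$ and $v$ and alternates between $N_1\subseteq M$ and edges not in $M$. The digon $D$ formed by $e$ and $e'$ is a cycle of length two with $e\in M$ and $e'\notin M$, so $D$ is also an $M$-alternating cycle. Because $C$ lies in $G-u-v$, the cycles $C$ and $D$ are vertex-disjoint. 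This contradicts Proposition \ref{PM-P2}. Hence $G-u-v$ has a unique perfect matching, and $e$ is forcing.

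The only delicate point is whether a $2$-cycle formed by parallel edges counts as an even cycle in the definition of PM-compactness, since the paper allows multiple edges. If one prefers to avoid this, the same argument can be run directly from the definition. Take $C_1=D$ and $C_2=C$. Then $G-V(C_1)-V(C_2)$ is matchable, because $N_1\setminus E(C)$ is a perfect matching of it. This again contradicts PM-compactness, under the natural convention that digons are even cycles. I expect this convention issue to be the only obstacle; the rest is immediate from bicriticality and the symmetric-difference argument.
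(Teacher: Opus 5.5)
Your proof is correct and follows the paper's approach: the paper also treats the parallel pair $e,e'$ as an even cycle of length two and uses PM-compactness to conclude that $G-u-v$ has a unique perfect matching, and your symmetric-difference argument just spells out the step the paper leaves implicit. The paper adopts the same convention that a digon counts as an even cycle, so the point you flag is not an obstacle.
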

\begin{proof}
   Let $e=uv$. Since $e$ is a multiple edge, there exists another edge $e'$, distinct from $e$, that is also incident with $u$ and $v$. Then $e$ and $e'$ form a cycle of length two.
   Since $G$ is PM-compact, $G-u-v$ contains a unique perfect matching, and thus $e$ is a forcing edge of $G$.
\end{proof}

Since all even cycles of an odd wheel contain its hub, every odd wheel contains no pair of vertex-disjoint even cycles, and thus is PM-compact.
For $W_k$ with $k\geq 5$, the forcing edges are precisely its spokes. Every edge of $W_3\cong K_4$ is forcing.
Let $G$ be a PM-compact brick. If the underlying graph of $G$ is $W_k$ with $k\geq 5$, then its multiple edges are parallel to spokes of $W_k$ by Lemma \ref{MF}.

A \emph{prism} is formed from two vertex-disjoint cycles $C_k=u_1u_2\dots u_ku_1$ and $C'_k=v_1v_2\dots v_kv_1$ by adding the edges $u_iv_i$ for $i=1,2,\dots,k$, where $k\geq 3$. When $k$ is odd, it is an \emph{odd prism}. The odd prism with $k=3$ is the \emph{triangular prism}, i.e., $\overline{C_6}$. Every prism is a simple 3-connected planar graph.

An edge $e$ of a matching covered graph $G$ is \emph{removable} if $G-e$ is matching covered.

\begin{The}[Theorem 9.14, \cite{Lucchesi2022}]
\label{NRE}
Every brick $G$ distinct from $K_4$ and $\overline{C_6}$ has at least
$\Delta(G)-2$ removable edges.
\end{The}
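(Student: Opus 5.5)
Since Theorem~\ref{NRE} is quoted from \cite{Lucchesi2022}, we only sketch how we would prove it. Fix a vertex $v$ of maximum degree $\Delta(G)$. We aim to show that at most two edges of $\partial(v)$ are non-removable, unless $G\in\{K_4,\overline{C_6}\}$. This gives at least $\Delta(G)-2$ removable edges directly.

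The basic tool is the barrier characterization of non-removability. Let $e=vw$. Since $G$ is a brick, hence bicritical, $G-e$ is still matchable. If $e$ is not removable, then $G-e$ has an edge $f$ lying in no perfect matching of $G-e$. By Tutte's theorem this yields a maximal barrier $B_e$ of $G-e$ such that:
\begin{itemize}
\item $|B_e|\geq 2$;
\item $v$ and $w$ lie in distinct odd components of $(G-e)-B_e$;
\item $f$ has both ends in $B_e$.
\end{itemize}
Bicriticality of $G$ rules out $|B_e|\le 1$, and $3$-connectivity of $G$ shows that the odd components of $(G-e)-B_e$ containing $v$ and $w$ are not joined to the rest except through $B_e$ and $e$. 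Equivalently, every perfect matching of $G$ using $f$ must use $e$; we say $e$ depends on $f$.

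Next, suppose three edges $e_1=vw_1$, $e_2=vw_2$, $e_3=vw_3$ at $v$ are all non-removable, with barriers $B_1,B_2,B_3$. Each $B_i$ is a barrier of $G-e_i$ but not of $G$, so $B_i$ is a barrier of $G$ "up to the single edge $e_i$". We plan to compare two such barriers $B_i,B_j$ using submodularity of the deficiency function $X\mapsto o(G-X)-|X|$ in the graphs $G-e_i-e_j$. The vertex $v$ belongs to no $B_i$, because $v$ is an end of $e_i$ lying in an odd component. Also, $w_j\in B_i$ for $j\neq i$ unless $e_j$ also crosses between odd components of $(G-e_i)-B_i$. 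From this we aim to derive that the odd components containing $v$ must be singletons $\{v\}$, and that the barriers interact so tightly that $|V(G)|$ is forced to be $4$ or $6$. Counting edges through the barriers then identifies $G$ as $K_4$ or $\overline{C_6}$, using that all degrees are at least $3$.

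The main obstacle is this three-barrier interaction. A single non-removable edge is easy to describe, but ruling out a third one requires controlling how the maximal barriers $B_1,B_2,B_3$ overlap. We would first try to show that $B_i\cup B_j$ or $B_i\cap B_j$ is again a barrier of $G-e_i-e_j$ (uncrossing). Then bicriticality forces the shores $S_i=V(G)\setminus(B_i\cup\{\text{odd component of } v\})$ to be pairwise disjoint and each of odd size. Parity and $3$-connectivity applied to the cuts $\partial(S_i)$ should then pin down the exceptional graphs. If the uncrossing fails, our fallback is the ear-decomposition route of Lov\'asz and Plummer.
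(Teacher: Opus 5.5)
\paragraph{Gap: the claim at a maximum-degree vertex is false.} Your plan rests entirely on the claim that, at a vertex $v$ of maximum degree, at most two edges of $\partial(v)$ are non-removable unless $G\in\{K_4,\overline{C_6}\}$. That claim is false, so no barrier-uncrossing argument can establish it.

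Take the odd wheel $W_5$ with hub $h$ and replace each rim vertex $u_i$ by a triangle $T_i=x_iy_iz_i$, that is, splice a copy of $K_4$ at $u_i$. Here $hx_i$ is the old spoke at $u_i$, and the edges $z_iy_{i+1}$ (indices modulo $5$) are the old rim edges.

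\begin{itemize}
\item Each cut $\partial(V(T_i))$ is a matching of size three. So by Theorem~\ref{2.1} the resulting $16$-vertex graph $G$ is a brick.
\item The hub $h$ is the \emph{unique} vertex of maximum degree, with $\Delta(G)=5$.
\item Every edge at $h$ is non-removable. In $G-hx_i$, every perfect matching meets the $3$-cut $\partial(V(T_i))$ in exactly one edge, located at $y_i$ or $z_i$. Hence $x_i$ is matched inside $T_i$, and $y_iz_i$ lies in no perfect matching of $G-hx_i$.
\end{itemize}

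No choice of $v$ rescues the claim, since $h$ is the only vertex of maximum degree. Consequently, some step of your intended derivation, from three non-removable edges at $v$ to $|V(G)|\in\{4,6\}$, must fail.

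\paragraph{Where the removable edges actually are.} In this example the removable edges are the five edges $y_iz_i$. These are exactly the edges that become inadmissible when the spokes are deleted. They are the edges spanned by the barriers $\{y_i,z_i\}$ of $G-hx_i$, not edges at $h$.

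So a correct proof of the $\Delta(G)-2$ bound has to find removable edges away from $v$. One possible route is to associate with each non-removable $e\in\partial(v)$ a removable edge determined by its barrier $B_e$ (here, the edge inside $B_e$). One would then show that at most two of these associations fail or coincide. That idea is missing from your sketch.

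\paragraph{The rest is not yet an argument.} Beyond this, the proposal is a list of intentions (``we would first try'', ``fallback'') rather than a proof. The uncrossing of $B_1,B_2,B_3$, the disjointness and parity of the shores, and the final identification of $K_4$ and $\overline{C_6}$ are all left unproved.
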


If $v_1$ and $v_2$ are two distinct neighbors of $u$, and $d_G(u)=2$, then the  operation of contracting $\{u,v_1,v_2\}$ to a single vertex is called the \emph{bicontraction} of $u$.
The \emph{retract} of $G$ is the graph obtained from $G$ by repeatedly bicontracting all the vertices of $G$ with degree two and with two distinct neighbors.
Let $K_2^*$ denote any graph obtained from $K_2$ by possibly adding multiple edges between its two vertices.

\begin{Lem}
\label{MG3}
(\cite{ChenDaiLu2026})
Let $G$ be a matching covered graph. If $\delta(G)\geq 3$ and $|\mathcal{M}(G)|=3$, then $G\cong K_4$ or $G$ is $K_2^*$ with exactly three edges.
\end{Lem}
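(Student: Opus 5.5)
If $|V(G)|=2$, every perfect matching of $G$ is a single edge joining the two vertices, and $G$ is matching covered, so $G$ is $K_2^*$ with exactly $|\mathcal{M}(G)|=3$ edges. So I assume $|V(G)|\geq 4$ and let $\mathcal{M}(G)=\{M_1,M_2,M_3\}$.

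\emph{Step 1: $G$ is cubic.} Since $G$ is matching covered, every edge lies in some $M_i$. Each $M_i$ contains exactly one edge at a vertex $v$, so $d_G(v)\le 3$. Together with $\delta(G)\ge 3$ this gives $d_G(v)=3$, and the three edges at $v$ come from distinct $M_i$. Hence $G$ is cubic, the matchings $M_1,M_2,M_3$ are pairwise disjoint, and $E(G)=M_1\cup M_2\cup M_3$.

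\emph{Step 2: each pair of matchings forms a Hamilton cycle.} Fix $i\neq j$. The union $M_i\cup M_j$ is a spanning disjoint union of even cycles, say $k$ of them. Choosing one of the two perfect matchings independently on each cycle gives $2^k$ distinct perfect matchings of $G$. Since $2^k\le 3$, we get $k=1$, so $M_i\cup M_j$ is a Hamilton cycle. In particular $G$ has no $2$-cycle, because a $2$-cycle would be a component of some $M_i\cup M_j$ while $|V(G)|\ge 4$. Thus $G$ is simple.

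\emph{Step 3: reduce to $n=4$.} Write $C=M_1\cup M_2=v_1v_2\dots v_nv_1$ and $n=|V(G)|$, so $M_3$ is a perfect matching of chords of $C$. Any perfect matching that contains a chord differs from $M_1$ and $M_2$. Any perfect matching that contains at least one edge of $C$ differs from $M_3$. So it suffices to show that if $n\ge 6$, some perfect matching uses both chords and edges of $C$. This would be a fourth perfect matching, a contradiction.

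To find one, I would choose a chord $e=v_1v_j$ of $M_3$ minimizing the length of the arc $v_1Cv_j$; here $j\ge 4$ since $G$ is simple and $j$ is not $2$ or $n$.
\begin{itemize}
\item If the interior $v_2,\dots,v_{j-1}$ has an even number of vertices, it is matched by edges of $C$ along that arc. The complementary arc $v_{j+1},\dots,v_n$ is then also even and is matched along $C$ as well. Together with $e$ this gives the required matching.
\item Otherwise the interior has odd size. By minimality, the chord at $v_2$ ends at some $v_k$ outside the arc. Then $e$, the chord $v_2v_k$, and edges of $C$ on the remaining arcs $v_3\dots v_{j-1}$, $v_{j+1}\dots v_{k-1}$ and $v_{k+1}\dots v_n$ should give the required matching. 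A parity check shows these arcs are even, possibly after replacing $v_2$ by $v_{j-1}$ and using the symmetric chord.
\end{itemize}
Once $n=4$, a simple cubic graph on four vertices is $K_4$.

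I expect the main obstacle to be this parity bookkeeping in Step 3. The goal is to guarantee that some chord, or a pair of chords, splits $C$ into even arcs whenever $n\ge 6$. If the minimal-chord choice is not enough, my fallback is to use all three Hamilton cycles $M_1\cup M_2$, $M_1\cup M_3$ and $M_2\cup M_3$ symmetrically, since the roles of the matchings are interchangeable. I would then look for a pair of crossing chords, which always yields such an even split.
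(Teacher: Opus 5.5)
\textbf{There is a gap in the second bullet of Step 3.} Steps 1 and 2 are correct. The paper only quotes this lemma from the literature without proof, so your argument has to carry the whole load, and it breaks at that bullet.

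In the second bullet, $j$ is odd and $v_2$ is matched to some $v_k$ with $j+1\le k\le n$. The arcs $v_{j+1}\dots v_{k-1}$ and $v_{k+1}\dots v_n$ have $k-j-1$ and $n-k$ vertices. Both numbers are odd whenever $k$ is odd, so the claimed ``parity check'' fails.

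Replacing $v_2$ by $v_{j-1}$ does not help in general. Simplicity does not exclude $j=3$, so your assertion $j\ge 4$ is wrong, and when $j=3$ you have $v_{j-1}=v_2$. A concrete instance is the triangular prism: $C=v_1v_2\dots v_6v_1$ with $M_1=\{v_1v_2,v_3v_4,v_5v_6\}$, $M_2=\{v_2v_3,v_4v_5,v_6v_1\}$ and $M_3=\{v_1v_3,v_2v_5,v_4v_6\}$.
\begin{itemize}
\item It satisfies everything you derived in Steps 1 and 2.
\item Every choice of minimal chord and orientation gives $j=3$ and $k=5$.
\item Your construction therefore produces nothing, although the fourth perfect matching $\{v_2v_5,v_3v_4,v_6v_1\}$ exists.
\end{itemize}

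Your fallback is not sound as stated either. Two crossing chords can cut $C$ into four odd arcs, for example $v_1v_5$ and $v_3v_7$ on an $8$-cycle.

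\textbf{The missing observation.} A single chord $v_av_b\in M_3$ with $b-a$ odd already suffices. It splits $C$ into two arcs, each with an even number of vertices, and these can be matched by edges of $C$. Since $n\ge 6$, at least one such edge is used. In your bad subcase ($k$ odd), $v_2v_k$ is exactly such a chord, with arcs $v_3\dots v_{k-1}$ and $v_{k+1}\dots v_nv_1$ of sizes $k-3$ and $n-k+1$.

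Step 3 then closes as follows.
\begin{itemize}
\item If some chord of $M_3$ joins indices of opposite parity, use it alone.
\item Otherwise the minimal chord $v_1v_j$ has $j$ odd. Minimality forces $v_2$ to be matched outside the arc, necessarily to some $v_k$ with $k$ even. Your two-chord matching then works, because $j-3$, $k-j-1$ and $n-k$ are all even.
\end{itemize}
With this repair the proof is complete.
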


\begin{Lem}
\label{MG4}
If $G$ is a brick and $|\mathcal{M}(G)|=4$, then either $G\cong \overline{C_6}$ or $G$ can be obtained from $K_4$ by adding exactly one multiple edge.
\end{Lem}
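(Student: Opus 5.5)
We reduce to a brick with minimum degree at least three and then use removable edges together with Lemma \ref{MG3}.

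\textbf{Setup.} Let $G$ be a brick with $|\mathcal{M}(G)|=4$. Since $G$ is 3-connected, $\delta(G)\geq 3$. If $G$ is simple, the claim is that $G\cong \overline{C_6}$; if $G$ has multiple edges, the claim is that it is $K_4$ plus one parallel edge. Throughout, recall that $K_4$ has exactly three perfect matchings, $\overline{C_6}$ has exactly four, and every brick has a perfect matching avoiding any prescribed edge, since bricks are matching covered and not $K_2$.

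\textbf{Main step.} First dispose of $G\cong \overline{C_6}$ and $G\cong K_4$; the latter is impossible because $K_4$ has only three perfect matchings. Otherwise Theorem \ref{NRE} gives at least $\Delta(G)-2\geq 1$ removable edges; pick one, $e$.

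The graph $G-e$ is matching covered. Every perfect matching of $G-e$ is a perfect matching of $G$ avoiding $e$. Moreover $e$ lies in at least one perfect matching of $G$, because $G$ is matching covered. Hence
\[
|\mathcal{M}(G-e)|\leq 3 .
\]

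\textbf{Degree of $G-e$.} We now need $\delta(G-e)\geq 3$, and we argue this directly. Suppose some end $u$ of $e$ had degree $3$ in $G$, so $d_{G-e}(u)=2$. Write $u$'s remaining edges as $uv_1,uv_2$.
\begin{itemize}
\item If $v_1=v_2$, then every perfect matching of $G-e$ contains one of the two parallel edges $uv_1$. Hence every edge at $v_1$ other than those two lies in no perfect matching, contradicting matching covered.
\item If $v_1\neq v_2$, we instead count perfect matchings of $G$ through $e$ via bicriticality. This is the delicate point, and it may be easier to choose $e$ appropriately among several removable edges. Note that Theorem \ref{NRE} gives at least two removable edges when $\Delta\geq 4$.
\end{itemize}
I expect this degree issue to be the main obstacle.

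\textbf{Counting perfect matchings of $G-e$.} Assume $\delta(G-e)\geq 3$. Because $G$ is bicritical, $G-e$ is not bipartite-trivial, and more precisely one argues $|\mathcal{M}(G-e)|\geq 3$ as follows. The graph $G-e$ is matching covered with $\delta\geq 3$. A matching covered graph with exactly one perfect matching is $K_2$ by Theorem \ref{BE}, since every edge is in a perfect matching and there is no bridge outside the matching. A matching covered graph with exactly two perfect matchings is an even cycle or $K_2^*$ with two edges, and both of these have $\delta\leq 2$.

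Therefore $|\mathcal{M}(G-e)|=3$, and Lemma \ref{MG3} yields $G-e\cong K_4$ or $G-e$ is $K_2^*$ with three edges. The latter is excluded because $G$ is a brick and has at least four vertices. So $G-e\cong K_4$, and $G=K_4+e$.

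\textbf{Conclusion.} If $e$ joins two nonadjacent vertices, this is impossible since $K_4$ is complete. Hence $e$ is parallel to an edge of $K_4$, giving exactly one multiple edge. Conversely, $K_4$ plus a parallel edge indeed has $3+1=4$ perfect matchings, which confirms consistency.

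In the residual case where every removable edge creates a degree-2 vertex with two distinct neighbours, we bicontract. Let $G'$ be the retract of $G-e$. Then $|\mathcal{M}(G')|=|\mathcal{M}(G-e)|$, and a bicontracted vertex corresponds to a subdivided structure. Applying the same counting to $G'$ gives $G'\cong K_4$ or $K_2^*$ with three edges. Expanding back and re-adding $e$ under the brick and 3-connectivity constraints should force $G\cong\overline{C_6}$, which is $K_4$ with one edge bisubdivided plus an edge. We would verify this small finite case analysis by hand.
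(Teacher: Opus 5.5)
Your skeleton matches the paper's: Theorem~\ref{NRE} gives a removable edge $e$, hence $|\mathcal{M}(G-e)|=3$, and then Lemma~\ref{MG3} applies. But there is a genuine gap exactly where you flag it. Lemma~\ref{MG3} requires $\delta(G-e)\ge 3$, and you never rule out an end $u$ of $e$ having $d_{G-e}(u)=2$ with two distinct neighbours $v_1,v_2$. Your ``residual case'' is a plan rather than an argument, and its predicted outcome is wrong. $\overline{C_6}$ was already set aside before you invoked Theorem~\ref{NRE}, so it cannot be what this branch produces; in fact the branch is empty. (Incidentally, $\overline{C_6}$ is $K_4$ with two adjacent edges each subdivided once, plus the edge joining the two new vertices. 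It is not $K_4$ with one edge bisubdivided plus an edge.) Choosing $e$ cleverly among several removable edges is also unnecessary.

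The missing idea is a degree count using $|\mathcal{M}(G-e)|=3$. In a matching covered graph, distinct edges at a vertex lie in distinct perfect matchings. So every vertex of $G-e$ has degree at most $3$, and at a vertex of degree exactly $3$ each edge lies in exactly one of the three perfect matchings.

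Now $d_{G-e}(v_i)=3$ for $i=1,2$. Degree $2$ is possible only if $v_i$ is the other end of $e$, joined to $u$ by a second edge parallel to $e$, with both ends of degree $3$ in $G$. But then $|\partial_G(\{u,v_i\})|=2$, which is impossible in a $3$-connected graph on at least four vertices. Hence $uv_1$ and $uv_2$ each lie in exactly one perfect matching of $G-e$. Yet every perfect matching contains one of them, giving $3\le 2$. So $\delta(G-e)\ge 3$, and your conclusion $G-e\cong K_4$ then goes through.

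The paper encodes the same point via the retract. The retract $H$ of $G-e$ has $\delta(H)\ge 3$ and $|\mathcal{M}(H)|=3$, so $H$ is cubic by Lemma~\ref{MG3}. Bicontracting $u$ would create a vertex of degree $d(v_1)+d(v_2)-2\ge 4$, hence $H=G-e$.

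One small simplification: your lower bound $|\mathcal{M}(G-e)|\ge 3$ needs only one vertex of degree at least $3$ in $G-e$, and any vertex not incident with $e$ supplies it.
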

\begin{proof}
  Since $K_4$ has exactly three perfect matchings, $G\not\cong K_4$. By Theorem \ref{NRE}, either $G\cong \overline{C_6}$ or $G$ has a removable edge $e$; equivalently, $G-e$ is matching covered.
  Since $e$ belongs to at least one perfect matching of $G$, $|\mathcal{M}(G-e)|\leq 3$. Since $G-e$ is matching covered and $\Delta(G-e)\geq 3$, $|\mathcal{M}(G-e)|=3$. Let $H$ be the retract of $G-e$. Then $\delta(H)\geq 3$ and $|\mathcal{M}(H)|=|\mathcal{M}(G-e)|=3$.
  By Lemma \ref{MG3}, $H\cong K_4$ or $H$ is $K_2^*$ with exactly three edges.
  Since both possible graphs $H$ are cubic, the retraction cannot involve a bicontraction, which would produce a vertex of degree at least four. Hence $H=G-e$.
  Moreover, as $G$ is a brick, we have $|V(G)|\geq 4$. Therefore, $G\cong K_4+e$.
 \end{proof}

\begin{The}
   \label{3.1}
   (\cite{WXM2013})
   If $G$ is a bipartite matching covered graph, then $G$ is PM-compact if and only if the retract of $G$ is $K_{3,3}$ or $K_2^*$.
\end{The}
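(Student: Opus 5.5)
The plan is to reduce to minimum degree at least three via bicontractions, and then translate PM-compactness into a statement about disjoint directed cycles in an auxiliary digraph.

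\emph{Step 1: bicontraction preserves PM-compactness in both directions.} Let $u$ have degree two with distinct neighbours $v_1,v_2$, and let $G'$ be obtained by bicontracting $u$ into a vertex $w$. There is the standard bijection $\mathcal{M}(G)\to\mathcal{M}(G')$, and $G'$ is again bipartite and matching covered.

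First I would record a reformulation of PM-compactness. A matchable graph is PM-compact if and only if no perfect matching $M$ admits two vertex-disjoint $M$-alternating cycles. One direction is Proposition~\ref{PM-P2}. For the other, if $C_1,C_2$ are disjoint even cycles and $G-V(C_1)-V(C_2)$ has a perfect matching $N$, then $N$ together with one perfect matching of each $C_i$ is a perfect matching $M$ for which $C_1,C_2$ are $M$-alternating.

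Under the bijection, an $M$-alternating cycle of $G$ through $u$ corresponds to an $M'$-alternating cycle of $G'$ through $w$. An $M$-alternating cycle of $G$ avoiding $u$ uses at most one of $v_1,v_2$ in a way compatible with $M$, and maps to an $M'$-alternating cycle of $G'$. Conversely, an $M'$-alternating cycle through $w$ lifts by inserting the path $v_1uv_2$ or by rerouting through $v_1$ or $v_2$. Disjointness is preserved in both directions, so $G$ is PM-compact if and only if $G'$ is.

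Iterating, $G$ is PM-compact if and only if its retract $H$ is. Here $H$ is a bipartite matching covered graph that either is $K_2^*$ or has $\delta(H)\ge 3$.

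\emph{Step 2: the graphs in the statement are PM-compact.} $K_2^*$ has no two disjoint cycles. In $K_{3,3}$ every even cycle has length $4$ or $6$, so two disjoint ones would need at least $8>6$ vertices. This gives the ``if'' direction.

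\emph{Step 3: the ``only if'' direction.} Assume $\delta(H)\ge 3$, $H$ is bipartite with colour classes $A,B$, and $H$ is PM-compact but not $K_2^*$. Fix a perfect matching $M$ of $H$. Form the digraph $D=D(H,M)$ by contracting each edge of $M$ to a vertex and orienting every other edge from its $A$-end to its $B$-end. An edge parallel to an $M$-edge becomes a loop.

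Then $M$-alternating cycles of $H$ correspond exactly to directed cycles of $D$, with loops corresponding to $2$-cycles of $H$. Disjoint alternating cycles correspond to disjoint directed cycles. Moreover $\delta^+(D)\ge 2$ and $\delta^-(D)\ge 2$, since each vertex of $H$ has degree at least three and only one incident $M$-edge. So PM-compactness of $H$ implies that $D$ has no two vertex-disjoint directed cycles, and this must hold for every choice of $M$.

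I would then invoke Thomassen's theory of digraphs with minimum out-degree two and no two disjoint cycles to conclude that $D$ is the complete symmetric digraph on three vertices. Pulling back, $H\cong K_{3,3}$.

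\emph{Main obstacle.} The main obstacle is Step 3. A loop in $D$ would already force all other cycles to meet that vertex, so loops can occur only in degenerate situations. I would first dispose of these: they should force $H=K_2^*$.

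If the general digraph result is not available in a clean form, I would instead exploit the freedom to change $M$. Take a shortest $M$-alternating cycle $C$, and use $\delta\ge 3$ to find an alternating cycle avoiding $C$ either for $M$ or for $M\triangle C$. This argument should only close up when $|V(H)|=6$, and a direct check then identifies $K_{3,3}$ among bipartite matching covered graphs on six vertices with minimum degree three.
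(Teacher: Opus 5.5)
Your Steps 1 and 2 are sound. Under bicontraction, only one of two disjoint alternating cycles can meet $\{u,v_1,v_2\}$, and a cycle avoiding $u$ also avoids the neighbour matched to $u$. So $G$ and its retract are PM-compact simultaneously, and $K_2^*$ and the simple $K_{3,3}$ are PM-compact.

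The gap is Step 3, which is the whole substance of the theorem; the paper does not prove it but imports it from Wang et al. There is no theorem of Thomassen of the kind you invoke. His result gives two disjoint cycles only under minimum out-degree $3$. Digraphs with $\delta^{+},\delta^{-}\ge 2$ and no two disjoint cycles form an infinite family (McCuaig's intercyclic digraphs).

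Worse, no argument based on a single perfect matching can reach $K_{3,3}$, because such digraphs arise from exactly the graphs in your Step 3. For $n\ge 3$, build $H_n$ as follows:
\begin{itemize}
\item start from the cycle $a_1b_2a_2b_3\cdots a_nb_1a_1$;
\item add two adjacent vertices $a_0$ and $b_0$;
\item join $a_0$ to every $b_i$ and $b_0$ to every $a_i$.
\end{itemize}
Then $H_n$ is simple, bipartite, matching covered, and $\delta(H_n)=3$. For $M=\{a_ib_i: 0\le i\le n\}$, the digraph $D(H_n,M)$ is the directed cycle $1\to 2\to\cdots\to n\to 1$ plus a vertex $0$ joined to each $i$ by arcs in both directions. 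It has $\delta^{+},\delta^{-}\ge 2$ and no two disjoint cycles, since every cycle other than the rim passes through $0$ and meets the rim. Nevertheless $H_n$ is not PM-compact: $a_0b_1a_1b_2a_0$ and $b_0a_2b_3a_3b_0$ are disjoint $4$-cycles, and $\{a_ib_i: 4\le i\le n\}$ matches the remaining vertices.

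So the proof must genuinely use that the condition holds for \emph{every} perfect matching. That is exactly what your fallback leaves as a hope. Saying you will ``switch to $M\triangle C$'' and that this ``should close up only at six vertices'' does not explain how a second disjoint alternating cycle is produced, or why the process fails precisely for $K_{3,3}$.

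Your degenerate case, by contrast, is easy to close. Suppose $a,b$ are joined by parallel edges and $|V(H)|\ge 4$. PM-compactness forces $H-a-b$ to have a unique perfect matching $N$, so $D(H-a-b,N)$ is acyclic. Take a source $c'd'$ and a sink $cd$ reached from it by a directed path. By $\delta\ge 3$ and bipartiteness, $d'$ has at least two edges to $a$ and $c$ has at least two edges to $b$. The directed path gives an $N$-alternating path from $c'$ to $d$, so $H-\{a,b,c,d'\}$ is matchable. Hence the $2$-cycles $\{a,d'\}$ and $\{b,c\}$ contradict PM-compactness. Thus $H$ may be assumed simple, and $|V(H)|=6$ gives $K_{3,3}$ directly.

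What remains missing is a real proof of the following: a simple bipartite matching covered graph with $\delta\ge 3$ and at least eight vertices has some perfect matching admitting two disjoint alternating cycles. That is the content of the cited result, and it needs a structural argument. Possible routes are brace or ear-decomposition structure, or McCuaig's classification followed by a change of perfect matching in each case.
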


\section{A structure theorem for planar PM-compact bricks}

For two sets $A$ and $B$, the \emph{symmetric difference} of $A$ and $B$ is defined by $A\oplus B=(A\cup B)\setminus (A\cap B)$.
If $C_1$ and $C_2$ are two cycles, let $C_1\oplus C_2=E(C_1)\oplus E(C_2)$.

\begin{Lem}
   \label{GF}For $i=1,2$, let $G_i$ be a matching covered graph, $x_i\in V(G_i)$, $G\in G_1(x_1)\odot G_2(x_2)$, and $y_1y_2\in \partial_G( V(G_1)\setminus \{x_1\})$, where $y_i\in V(G_i)\setminus \{x_i\}$.
   If $G$ is PM-compact, then $x_1y_1$ is a forcing edge of $G_1$, or $x_2y_2$ is a forcing edge of $G_2$, or both.
\end{Lem}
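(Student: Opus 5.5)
We argue by contradiction. Suppose neither $x_1y_1$ is forcing in $G_1$ nor $x_2y_2$ is forcing in $G_2$, and let $e=y_1y_2$ be the edge of $G$ in the cut. Write $e_i=x_iy_i$ for the edge of $G_i$ corresponding to $e$. The plan is to build two vertex-disjoint even cycles in $G$, one inside $V(G_1)\setminus\{x_1\}$ and one inside $V(G_2)\setminus\{x_2\}$, whose removal leaves a matchable graph. This contradicts PM-compactness.

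Since $G_1$ is matching covered, $e_1$ lies in some perfect matching of $G_1$. Since $e_1$ is not forcing, it lies in two distinct perfect matchings $M_1$ and $M_1'$ of $G_1$. Their symmetric difference $M_1\oplus M_1'$ is a nonempty disjoint union of even alternating cycles. Neither matching can avoid $e_1$, and every such cycle must avoid $x_1$, because $x_1$ is covered in both matchings by the same edge $e_1$. Choose one such cycle $C_1$; it lies in $G_1-x_1$ and avoids $y_1$, which is also covered by $e_1$ in both matchings. Symmetrically, obtain $M_2\ni e_2$ and an $M_2$-alternating cycle $C_2$ in $G_2-x_2-y_2$. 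Edges of $G_i-x_i$ are edges of $G$, so $C_1$ and $C_2$ are vertex-disjoint even cycles of $G$.

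Next, form $M=(M_1\setminus\{e_1\})\cup(M_2\setminus\{e_2\})\cup\{e\}$. In the splicing, $e_1$ and $e_2$ are identified via $\theta$ into $e$, so $M$ is a perfect matching of $G$. Both $C_1$ and $C_2$ are $M$-alternating, since $M$ agrees with $M_i$ on $G_i-x_i$. Hence $M\setminus(E(C_1)\cup E(C_2))$ is a perfect matching of $G-V(C_1)-V(C_2)$. Equivalently, $G$ has two vertex-disjoint $M$-alternating cycles, which contradicts Proposition~\ref{PM-P2}.

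The main point needing care is that the cycles of $M_i\oplus M_i'$ miss $x_i$. This holds because both matchings use the same edge at $x_i$, so $x_i$ has degree $0$ in the symmetric difference. It is also worth noting that $G_i$ may have multiple edges, and $e_1$ is a specific edge rather than an edge class. This does not affect the argument: if $e_1$ is not forcing, a second perfect matching containing that very edge exists, and everything else is routine.
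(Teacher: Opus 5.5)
Your proof is correct and essentially the paper's argument. The paper likewise takes two distinct perfect matchings of $G_i-x_i-y_i$ (equivalently, two perfect matchings of $G_i$ containing $x_iy_i$) and picks a cycle $C_i$ in their symmetric difference. It then notes that $(M_1\setminus E(C_1))\cup(M_2\setminus E(C_2))\cup\{y_1y_2\}$ contains a perfect matching of $G-V(C_1)-V(C_2)$; your packaging of this as a perfect matching $M$ of $G$ with two disjoint $M$-alternating cycles, citing Proposition~\ref{PM-P2}, is only a cosmetic difference.
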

\begin{proof}{
   Suppose that both $x_1y_1$ and $x_2y_2$ are not forcing edges  of $G_1$ and $G_2$, respectively.}
   Then for $i=1,2$, $G_i-x_i-y_i$ contains at least two different perfect matchings $M_i$ and $M'_i$.
   Let $C_i$ be a cycle in $M_i\oplus M'_i$.
   Then $G-V(C_1)-V(C_2)$ contains a perfect matching, which is a subset of $(M_1\setminus E(C_1))\cup (M_2\setminus E(C_2))\cup \{y_1y_2\}$, a contradiction.
\end{proof}

The contractions of a separating cut preserve PM-compactness, as follows.

\begin{Lem}
   \label{SC}
   Let  $C$ be a separating cut of a matching covered graph $G$.
   If $G$ is PM-compact, then  both $C$-contractions of $G$ are PM-compact.
\end{Lem}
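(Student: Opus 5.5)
By symmetry it suffices to treat $G_1=G/\overline{X}\rightarrow \overline{x}$, where $C=\partial(X)$ and $G_1$ has vertex set $X\cup\{\overline{x}\}$. We argue by contradiction. Suppose $G_1$ is not PM-compact. Then it has two vertex-disjoint even cycles $C_1,C_2$ such that $G_1-V(C_1)-V(C_2)$ has a perfect matching $M_1$. The plan is to lift $C_1$, $C_2$ and $M_1$ back to $G$, producing two vertex-disjoint even cycles of $G$ whose complement is matchable. This would contradict the PM-compactness of $G$.

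\textbf{Case 1: $\overline{x}\notin V(C_1)\cup V(C_2)$.} Then $\overline{x}$ is covered by $M_1$ through a single edge $e\in C$. Since $C$ is separating, $G_2=G/X$ is matching covered, so it has a perfect matching $M_2$ containing $e$. Both cycles lie in $G[X]$, and hence in $G$. The set $M_1\cup M_2$ (with $e$ counted once) is a perfect matching of $G-V(C_1)-V(C_2)$. Here $M_1$ covers $X\setminus(V(C_1)\cup V(C_2))$ together with the end of $e$ in $\overline{X}$, and $M_2-e$ covers the rest of $\overline{X}$. This is the required contradiction.

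\textbf{Case 2: $\overline{x}$ lies on one of the cycles, say $C_1$.} Then $C_1$ uses exactly two edges $e,f\in C$ at $\overline{x}$, and $C_2\subseteq G[X]$. Let $a$ and $b$ be the ends of $e$ and $f$ in $\overline{X}$.

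If $a=b$, the cycle $C_1$ lifts to $G$ as an even cycle of the same length. Choose a perfect matching $M_2$ of $G_2$ containing $e$. Then $M_2-e$ perfectly matches $\overline{X}\setminus\{a\}$. Combining this with $M_1$ gives a perfect matching of $G-V(C_1)-V(C_2)$.

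If $a\neq b$, the main step is to find an even-length $a$–$b$ path inside $G[\overline{X}]$ whose complement in $\overline{X}$ is matchable. Take perfect matchings $N_e\ni e$ and $N_f\ni f$ of $G_2$, which exist because $G_2$ is matching covered. In $N_e\oplus N_f$, the vertex $x$ lies on an alternating cycle through $e$ and $f$. Removing $x$ from this cycle leaves an $a$–$b$ path $P$ in $G[\overline{X}]$. The path $P$ alternates between $N_e$ and $N_f$ and starts and ends with edges not in the matching at its ends, so it has an even number of edges and an odd number of vertices. Moreover, $N_e\setminus E(P)\setminus\{e\}$ (equivalently, the edges of $N_e$ off the cycle) perfectly matches $\overline{X}\setminus V(P)$.

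Replace $\overline{x}$ in $C_1$ by $P$. The resulting cycle of $G$ has length $|C_1|-2+2+|E(P)|$, which is even because $|E(P)|$ is even. It is vertex-disjoint from $C_2$, and its complement is perfectly matched by $M_1\cup(N_e\setminus E(\text{cycle}))$. This again contradicts the PM-compactness of $G$.

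The main obstacle is the parity and matchability bookkeeping for the path $P$ in the subcase $a\ne b$. This is the only place where separating-ness, through the perfect matchings of $G_2$ containing prescribed cut edges, is essential. The alternating-cycle argument above handles it.
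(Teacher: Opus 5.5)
Your proof is correct and follows the paper's argument essentially step for step. If the contraction vertex avoids both cycles, you extend $M_1$ by a perfect matching of the other contraction that contains the matched cut edge. Otherwise, you replace the contraction vertex on $C_1$ by the even path obtained from the alternating cycle through it in the symmetric difference of two perfect matchings of $G/X$. Your separate treatment of the subcase $a=b$ is harmless: the paper's uniform argument already covers it, because that alternating cycle is then a $2$-cycle.
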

\begin{proof}

   Let $G_1$ and $G_2$ denote the two $C$-contractions of $G$, respectively.
     Suppose, to the contrary, that $G_1$ is not PM-compact. Then $G_1$ contains two vertex-disjoint even cycles $C_1$ and $C_2$ such that $G_1-V(C_1)-V(C_2)$ contains a perfect matching, say $M_1$.
   Let $x_1$ and $x_2$ be the contraction vertices of $G_1$ and $G_2$, respectively.

   If $x_1\notin V(C_1)\cup V(C_2)$, then $x_1$ is incident with exactly one edge of $M_1$, say $e_1$.
   Since $e_1$ is incident with the contraction vertex $x_1$, it corresponds to an edge $e\in C$. In $G_2$, the edge $e$ corresponds to an edge $e_2$ incident with the contraction vertex $x_2$. As $C$ is a separating cut, $G_2$ is matching covered. It follows that $e_2$ belongs to some perfect matching of $G_2$, say $M_2$.
  Then $C_1$ and $C_2$ are two vertex-disjoint even cycles of $G$, and $(M_1\setminus \{e_1\})\cup(M_2\setminus \{e_2\})\cup\{e\}$ is a perfect matching of $G-V(C_1)-V(C_2)$, contradicting the PM-compactness of $G$.

   Now assume that $x_1\in V(C_1)\cup V(C_2)$. By symmetry, we may assume that $x_1\in V(C_1)$. Let $\{f_1,f_1'\}= \partial(\{x_1\})\cap E(C_1)$, and let $f_2$ and $f_2'$ be the two edges in $G_2$ corresponding to
   $f_1$ and $f_1'$. As $G_2$ is matching covered, there exist two perfect matchings of $G_2$, say $M_3$ and $M_4$, such that $f_2\in M_3$ and $f_2'\in M_4$. Note that $x_2\in V(f_2)\cap V(f_2')$. The vertex $x_2$ belongs to some (even) cycle in $M_3\oplus M_4$, say $C_3$. Let $C_1'=(C_1-x_1)+(C_3-x_2)+f+f'$, where $f$ and $f'$ are the edges in $C$ corresponding to $f_1$ and $f_1'$, respectively.
   Then $C_1'$ and $C_2$ are two vertex-disjoint even cycles of $G$, and $M_1\cup(M_3\setminus E(C_3))$ is a perfect matching of $G-V(C_1')-V(C_2)$, contradicting the PM-compactness of $G$.

   Similarly, the graph $G_2$ is also PM-compact.
\end{proof}

A brick is \emph{solid} if its separating cuts are trivial.
\begin{Lem}
   (\cite{Carvalho2006})
   \label{OW}
   Every simple planar solid brick is an odd wheel.
\end{Lem}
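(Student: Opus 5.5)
The plan is to argue through the standard characterization of solidity by odd cycles, and then use planarity to locate the hub of the wheel. First I would record the following characterization, due to de Carvalho, Lucchesi and Murty. A brick $G$ has a nontrivial separating cut if and only if it contains two vertex-disjoint odd cycles $C_1,C_2$ such that $G-V(C_1)-V(C_2)$ is matchable.
\begin{itemize}
\item For the "if" direction, take the perfect matching $M$ of $G-V(C_1)-V(C_2)$ and shrink the side containing $C_1$, chosen as a suitable odd set containing $V(C_1)$ and avoiding $V(C_2)$. The near-perfect matchings of the two odd cycles, combined with $M$, witness that both cut-contractions are matching covered.
\item For the "only if" direction, the two shores of a nontrivial separating cut each contain an odd cycle avoiding the other shore. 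The required matching is then obtained by splicing matchings of the two contractions, in the same way as in the proof of Lemma~\ref{SC}.
\end{itemize}
Hence a solid brick contains no such pair of odd cycles. This is the only property of solidity that the rest of the argument uses.

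Next, fix a plane embedding of the simple planar solid brick $G$. Since $G$ is 3-connected, Theorem~\ref{Wh3} makes this embedding essentially unique, and every face is bounded by a cycle. Because $G$ is nonbipartite, there are odd faces, and their number is even. The key claim is that \emph{any two odd faces of $G$ share a vertex}. Suppose instead that $f_1,f_2$ are vertex-disjoint odd faces, bounded by cycles $C_1,C_2$. By the characterization above, $H=G-V(C_1)-V(C_2)$ has no perfect matching. Tutte's theorem then gives a barrier $B$ of $H$ with $o(H-B)>|B|$.

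I would then lift $B\cup V(C_1)\cup V(C_2)$ back to $G$ and count odd components, using bicriticality of $G$ together with the fact that $C_1$ and $C_2$ are facial, so that each can be entered from only one side. The aim is to exhibit either a nontrivial tight cut, which a brick does not have, or a different pair of disjoint odd cycles whose complement is matchable, which solidity forbids. This step is the main obstacle. What I would try first is to replace $C_1$ by an odd cycle $C_1'$ passing through a vertex of a small odd component of $H-B$. The parity of the component and planarity (an odd component adjacent to the face $f_1$) should make $G-V(C_1')-V(C_2)$ matchable, giving the desired contradiction.

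Once the key claim holds, the family of odd faces is pairwise intersecting. The next step uses the planar dual together with 3-connectivity. A bounded-degree counting argument via Euler's formula shows that either all odd faces share a common vertex $h$, or $G\cong K_4$ (the Helly-type exception with four triangles meeting pairwise). In the first case, consider the faces around $h$. Every face not containing $h$ is even, and every face containing $h$ is odd. If some face around $h$ were even, or if $G-h$ had a chord, I would produce two disjoint odd cycles — one through $h$ and one in $G-h$ — with matchable complement, using bicriticality on $G-h-x$. Hence $G-h$ is a single cycle $R$ whose vertices are all adjacent to $h$, and the faces are the triangles at $h$ together with the face bounded by $R$. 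That face must be odd, because the odd faces come in even number. Since $G$ is simple and a brick, $R$ has odd length $k\geq 3$, and therefore $G\cong W_k$.
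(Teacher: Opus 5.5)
Note first that the paper does not prove this lemma; it quotes it from de Carvalho, Lucchesi and Murty. Your argument therefore has to stand on its own, and it has two genuine gaps.

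\textbf{First gap: the key claim is not proved.} The claim that any two odd faces of a simple planar solid brick share a vertex carries essentially the whole content of the theorem, and your sketch does not establish it.
\begin{itemize}
\item The barrier approach gives no contradiction by counting. With $S=B\cup V(C_1)\cup V(C_2)$ you have $G-S=H-B$. Bicriticality only gives $o(H-B)\le |S|-2=|B|+|V(C_1)|+|V(C_2)|-2$, which is compatible with $o(H-B)\ge |B|+2$.
\item Rerouting $C_1$ through one small odd component does not help. Nothing in the sketch controls the barrier $B$ or the remaining odd components of $H-B$. You also do not show that a suitable odd cycle $C_1'$ disjoint from $C_2$ exists. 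So there is no mechanism that makes $G-V(C_1')-V(C_2)$ matchable.
\end{itemize}
Citing the odd-cycle characterization is fine. Note, however, that the direction you use (such a pair of cycles gives a nontrivial separating cut) is not obtained by shrinking an odd set containing $V(C_1)$. The standard proof, via Edmonds' theorem, moves from a strictly positive point of the perfect matching polytope towards $\tfrac12\chi^{E(C_1)\cup E(C_2)}+\chi^{M}$. It then takes the first odd-set constraint that becomes tight, and that odd set need not contain $V(C_1)$.

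\textbf{Second gap: the step after the key claim is false as stated.}
\begin{itemize}
\item In $W_k$ with $k\ge 5$, the odd faces are the $k$ triangles at the hub and the face bounded by the rim. They pairwise intersect, no vertex lies on all of them, and $W_k\not\cong K_4$. So your dichotomy would rule out exactly the graphs you are trying to obtain.
\item In fact its first alternative never occurs in a planar brick. Suppose every face avoiding $h$ were even. In the $2$-connected plane graph $G-h$, the face left by deleting $h$ would then also be even, since the face lengths sum to $2|E(G-h)|$. Hence $G-h$ would be bipartite. Because $|V(G-h)|$ is odd, its colour classes have different sizes. Deleting $h$ together with a vertex of the smaller class then leaves a graph with no perfect matching, contradicting bicriticality.
\item Your final paragraph shows this inconsistency directly. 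You first assert that every face not containing $h$ is even. You then conclude that the face bounded by $R$, which does not contain $h$, is odd.
\end{itemize}
The correct intermediate statement must allow one exceptional odd face, i.e.\ all odd faces but one pass through $h$. Both that statement and the subsequent ``no even face at $h$, no chord of $G-h$'' step still need real arguments. The Euler-formula counting and the bicriticality construction you allude to are not given.
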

Replacing parallel edges by single edges preserves 3-connectivity and bicriticality. Thus the underlying graph of a planar solid brick is a simple planar solid brick, so Lemma \ref{OW} also applies to its underlying graph.
A separating cut $\partial(X)$ of a brick $G$ is \emph{robust} if $b(G/X) = 1$ and $b(G/\overline{X}) = 1$.
\begin{Lem}
   (\cite{Carvalho2004,He2026})
   \label{C2.9}
      Every nonsolid brick $G$ has a robust cut $\partial(X)$ such that there exists a subset $X'$ of $X$ and a subset $X''$ of $\overline{X}$ such that $G/\overline{X'}$ is a solid brick, $G/\overline{X''}$ is a brick and the graph $H$, obtained from $G$ by contracting $X'$ and $X''$ to single vertices $x'$ and $x''$, respectively, is bipartite and matching covered, where $x'$ and $x''$ lie in different color classes of $H$.
   \end{Lem}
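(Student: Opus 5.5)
The approach is to pick a robust cut of $G$ whose shore $X$ is as small as possible. The brick parts of the two near-bricks $G/\overline{X}$ and $G/X$ then give $X'$ and $X''$, and minimality of $X$ forces $G/\overline{X'}$ to be solid.

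\emph{Choosing the cut.} Since $G$ is nonsolid, it has a nontrivial separating cut. I would invoke the theorem of de Carvalho, Lucchesi and Murty that every brick with a nontrivial separating cut has a robust cut; this is the source behind \cite{Carvalho2004}. Among all robust cuts $\partial(X)$ of $G$, I choose one with $|X|$ minimum over all shores, including complements. Both $G/\overline{X}$ and $G/X$ are matching covered with brick number one.

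\emph{Building $X'$ and $X''$.} Consider a tight cut decomposition of the near-brick $G_1:=G/\overline{X}\rightarrow \overline{x}$. Its nontrivial tight cuts can be taken laminar. Exactly one of the resulting pieces is a brick; all the others are braces, hence bipartite. I would order the tight cuts by their shores not containing $\overline{x}$ and identify the brick piece as the contraction $G/\overline{X'}$ for some $X'\subseteq X$, obtained by shrinking everything outside $X'$, including $\overline{x}$. This step needs the standard fact that, in a near-brick, the shores of barrier cuts on the side away from the brick are bipartite. Shrinking $X'$ in $G_1$ then gives a bipartite matching covered graph, because the union of braces glued along tight cuts is bipartite. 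The symmetric argument applied to $G/X\rightarrow x$ gives $X''\subseteq\overline{X}$ with $G/\overline{X''}$ a brick. Gluing the two bipartite pieces along $\partial(X)$ gives $H$, obtained from $G$ by contracting $X'$ to $x'$ and $X''$ to $x''$. $H$ is matching covered by Lemma~\ref{SM}, since it is a splicing of matching covered graphs. It is bipartite because each tight cut of a bipartite graph has all its edges going from one colour class to the other, so the two colourings can be merged consistently. The same parity consideration, applied to the barrier containing the contraction vertex on each side, places $x'$ and $x''$ in different colour classes.

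\emph{Solidity of $G/\overline{X'}$.} This is the step I expect to be the main obstacle. Suppose $\partial(Y)$ is a nontrivial separating cut of the brick $G/\overline{X'}$, with $Y\subseteq X'$ chosen so that $Y$ avoids the contraction vertex. Then $\partial_G(Y)$ is a separating cut of $G$: its $\overline{Y}$-contraction is $G/\overline{X'}$ contracted by the separating cut, hence matching covered. Its $Y$-contraction is a splicing of matching covered graphs, hence matching covered by Lemma~\ref{SM}.

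The cut $\partial_G(Y)$ need not be robust. I would therefore apply the robust-cut theorem again, this time to the brick $G/\overline{X'}$, to replace $\partial(Y)$ by a robust cut of $G/\overline{X'}$ with shore $Y\subsetneq X'$. I would then argue that robustness lifts to $G$. The $\overline{Y}$-side of $\partial_G(Y)$ is obtained from a brick-number-one graph by splicing in the bipartite part $H$ and the brick $G/\overline{X''}$ through tight cuts. Tight cut decompositions are unique, so its brick number is $b(\,\cdot\,)=1$. The $Y$-side coincides with the $Y$-side in $G/\overline{X'}$, so its brick number is also one.

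This produces a robust cut of $G$ with shore $Y\subsetneq X$, contradicting the minimality of $|X|$. The delicate point is checking that the brick-number count survives the splicing with the bipartite part $H$. I would handle it by exhibiting the tight cuts of $G/Y$ explicitly as the lifts of the tight cuts used in building $H$.
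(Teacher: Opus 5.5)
The paper does not prove this lemma; it quotes it from \cite{Carvalho2004,He2026}. On its own terms, three parts of your argument are fine: the choice of the robust cut, the construction of $X'$, $X''$ and $H$ (apart from one omission noted at the end), and the proof that $\partial_G(Y)$ is separating. The solidity step, however, has a genuine gap: you claim that a robust cut $\partial(Y)$ of $G/\overline{X'}$ with $Y\subseteq X'$ stays robust in $G$, and your brick count does not support this.

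The graph $G/Y$ is the splicing of two graphs:
\begin{itemize}
\item $(G/\overline{X'})/Y$, which has brick number one;
\item $G/X'$, which also has brick number one, being $H$ and the brick $G/\overline{X''}$ glued along $\partial(X'')$, a cut that is tight in $G/X'$.
\end{itemize}
Suppose these pieces were glued along tight cuts of $G/Y$, as you assert. Then uniqueness of the tight cut decomposition gives $b(G/Y)=1+0+1=2$, so $\partial_G(Y)$ would \emph{not} be robust. Moreover, $H$ is bipartite with $x'$ and $x''$ in different classes. Hence every perfect matching of $G/Y$ meets $\partial_{G/Y}(\overline{X'})$ and $\partial_{G/Y}(X'')$ equally often. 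So robustness of $\partial_G(Y)$ actually requires these cuts to be non-tight in $G/Y$. In that case brick numbers are not additive across them, and nothing in your argument controls $b(G/Y)$.

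This is not just a gap in the write-up; the lifting claim is false. Let $J$ have vertices $c,d,e,b,x,p,q,r$ and edges $cd,ce,de,cb,eb,bx,cp,dq,xp,xr,pq,pr,qr$.
\begin{itemize}
\item $J$ is a brick.
\item For $Y=\{p,q,r\}$, the cut $\partial(Y)$ is robust in $J$: $J/\overline{Y}$ is $K_4$ with a doubled edge, and $J/Y$ has the barrier $\{y,b\}$ with components $\{c,d,e\}$ and $\{x\}$, so $b(J/Y)=1$.
\item Replace $x$ by a triangle joined to $b,p,r$ by a matching, i.e.\ splice with $K_4$. By Theorem~\ref{2.1} this gives a brick $G$ with robust cut $\partial(X)$, where $X=V(J)\setminus\{x\}$ and $G/\overline{X}=J$.
\item In $G/Y$, however, $\{y,b\}$ is a barrier whose two components are triangles, so $b(G/Y)=2$.
\end{itemize}
Here $X$ is not a minimal shore, but the lifting step as you state it fails.

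The same happens whenever $\overline{x'}$ is a singleton component of $(G/\overline{X'})/Y-B$ for a barrier $B\ni y$. In that case $B$ remains a barrier of $G/Y$ with $\overline{X'}$ as a component. You would then have to switch to another cut, e.g.\ $\partial(K)$ for the component $K$ carrying the brick, and bound $b(G/K)$. That raises the same splicing problem again. So you never produce a robust cut of $G$ with a smaller shore, and the contradiction with the minimality of $|X|$ is not reached. This step is the real content of the cited result.

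A smaller omission concerns the construction of $X'$. You need the brick of $G/\overline{X}$ to be cut off by a \emph{single} tight cut with $\overline{x}$ on the far side. This fails in general near-bricks, where the brick can be bounded by several disjoint tight cuts. It holds here because every barrier of $G/\overline{X}$ with at least two vertices, and every $2$-separation, must contain $\overline{x}$, since $G$ is bicritical and $3$-connected.
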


\begin{The}[The structure theorem]
\label{MKCo}
Every planar PM-compact nonsolid brick $G$ has a cut $C$ such that the underlying graph of one $C$-contraction is an odd wheel, and the other $C$-contraction is a planar PM-compact brick. If the underlying wheel is $W_k$ with $k\geq 5$, then all multiple edges of the former contraction are parallel to spokes.
\end{The}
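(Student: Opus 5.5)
The plan is to apply Lemma \ref{C2.9} to $G$ and show that the bipartite piece $H$ is trivial, so that the robust cut itself does the job. By Lemma \ref{C2.9}, $G$ has a robust cut $\partial(X)$ with $X'\subseteq X$ and $X''\subseteq\overline{X}$. Here $G/\overline{X'}$ is a solid brick, $G/\overline{X''}$ is a brick, and the graph $H$ obtained by contracting $X'$ to $x'$ and $X''$ to $x''$ is bipartite and matching covered, with $x'$ and $x''$ in different color classes. The candidate cut is $C=\partial(X')$, and the goal is to show that $C$ itself has the required properties.

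The first step is to prove that $C$ is separating. Its contractions are $G/\overline{X'}$, which is a brick, and $G/X'$. We have $G/X' = (G/\overline{X''})(x'') \odot H(x'')$ up to identification. This is a splicing of the brick $G/\overline{X''}$ with a bipartite matching covered graph, so it is matching covered by Lemma \ref{SM}. Lemma \ref{SC} then shows that both $C$-contractions are PM-compact. Both are planar, being contractions of a connected side of a cut in a planar graph; we may choose $X'$ so that $G[X']$ and $G[\overline{X'}]$ are connected, as is standard for separating cuts in bricks.

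For the first contraction: $G/\overline{X'}$ is a planar solid brick, so by Lemma \ref{OW} (applied to its underlying graph, as remarked after that lemma) its underlying graph is an odd wheel. The statement about multiple edges follows from Lemma \ref{MF}, since multiple edges are forcing and for $W_k$ with $k\ge 5$ only spokes are forcing.

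The main step is to show that $G/X'$ is a brick. Note that $H$ is a bipartite PM-compact matching covered graph; it is PM-compact because $\partial(X')$ and $\partial(X'')$ are separating, so Lemma \ref{SC} applies twice. By Theorem \ref{3.1}, the retract of $H$ is therefore $K_{3,3}$ or $K_2^*$.

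If the retract is $K_2^*$, then $H$ is a bi-subdivision of a multi-edge between $x'$ and $x''$. Then every other vertex of $H$ has degree $2$ in $H$, hence degree at most $2$ in $G$, which contradicts 3-connectivity unless $H$ has only $x',x''$. In that case $G/X'\cong G/\overline{X''}$ is a brick and we are done.

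If the retract is $K_{3,3}$, I would aim for a contradiction with planarity and PM-compactness. Vertices of degree two are excluded as before, so $H\cong K_{3,3}$, possibly with multiple edges. Then I would use Lemma \ref{GF} on the splicings at $x'$ and $x''$ to force many forcing edges in the odd wheel $G/\overline{X'}$ and in the brick $G/\overline{X''}$. I would then exhibit two disjoint even cycles, one through the wheel side and one inside $H$, with a perfect matching of the rest. Alternatively, I would argue directly that $G$ then contains a $K_{3,3}$-minor obtained by contracting $X'$ and $X''$, which contradicts planarity. This contraction argument is the cleanest route: $G[X']$ and $G[X'']$ are connected, so $H$ is a minor of $G$, and $H\supseteq K_{3,3}$ is nonplanar.

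With the $K_{3,3}$ case excluded, $H$ is trivial, $C=\partial(X')$ separates a solid-brick wheel from the brick $G/X'$, and all required properties hold.

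The main obstacle I expect is justifying the connectivity choice of $X'$ and $X''$ so that the minor argument and planarity transfer apply. If that cannot be arranged directly, I would fall back on the PM-compactness argument via Lemma \ref{GF}.
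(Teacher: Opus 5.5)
Your proposal is correct and follows essentially the same route as the paper: Lemma~\ref{C2.9}; Lemma~\ref{SM} to make $\partial(X')$ separating; Lemma~\ref{SC} (twice) for PM-compactness of $G/\overline{X'}$, $G/X'$ and $H$; Theorem~\ref{3.1} together with $\delta(H)\geq 3$ and planarity to force $H=K_2^*$, hence $G/X'=G/\overline{X''}$; and Lemma~\ref{MF} for the spokes. The connectivity you flag as the main obstacle is automatic, since deleting the contraction vertex from the $3$-connected bricks $G/\overline{X'}$ and $G/\overline{X''}$ leaves $G[X']$ and $G[X'']$ connected, and the $K_2^*$ case is cleaner if you simply note that every vertex of $H$ (including $x'$ and $x''$, as $G$ is $3$-edge-connected) has degree at least $3$, so $H$ is its own retract, rather than describing $H$ as a bi-subdivision.
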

\begin{proof}
   Since $G$ is a nonsolid brick, $G$ has two disjoint nonempty vertex subsets $X'$ and $X''$ such that
   $G_1=G/\overline{X'}\rightarrow \overline{x'}$, $G_2=G/\overline{X''}\rightarrow \overline{x''}$ and $H=(G/X'\rightarrow x')/X''\rightarrow x''$, where $G_1$ is a solid brick, $G_2$ is a brick, and $H$ is bipartite and matching covered  by Lemma \ref{C2.9}.
   Since $G$ is planar, so are $G_1$, $G_2$ and $H$. By Lemma \ref{OW} and the preceding observation, the underlying graph of $G_1$ is an odd wheel.


    Since $G/X'$ is a splicing of two matching covered graphs $G_2$ and $H$, $G/X'$ is matching covered by Lemma \ref{SM}, and thus $\partial_G(X')$ is a separating cut of $G$.
   By Lemma \ref{SC}, both $G_1$ and $G/X'$ are PM-compact. Since $\partial_{G/X'}(X'')$ is a separating cut of $G/X'$, both $G_2$ and $H$ are PM-compact by Lemma \ref{SC}.
   By Theorem \ref{3.1}, the retract of $H$ is $K_{3,3}$ or $K_2^*$. Since $G$ is 3-connected, $\delta(H)\geq 3$ and thus the retract of $H$ is itself.
   Since $H$ is planar, $H$ is $K_2^*$, and thus $G_2=G/X'$.
   Since $G_2$ is a planar PM-compact brick, $\partial_G(X')$ is the robust cut we need.
   If the underlying graph of $G_1$ is $W_k$ with $k\geq 5$, then all its multiple edges are parallel to spokes by Lemma \ref{MF}, since no rim edge is forcing.
\end{proof}


\section{\label{Sec3}Planar PM-compact splicings}

This section presents two fundamental graphs $G^*$ and $G^{\triangle}$ (see Fig. \ref{f2-0}) needed for the proof of Theorem \ref{MaT}.
Throughout this section, whenever a figure or one of its subfigures is cited, the vertex labels are those shown in the cited figure.
Let $H$ be a subgraph of a graph $G$. $H$ is called a \emph{spanning subgraph} of $G$ if $V(H)=V(G)$.
\begin{figure}[h]
   \centering
   \includegraphics[scale=0.3]{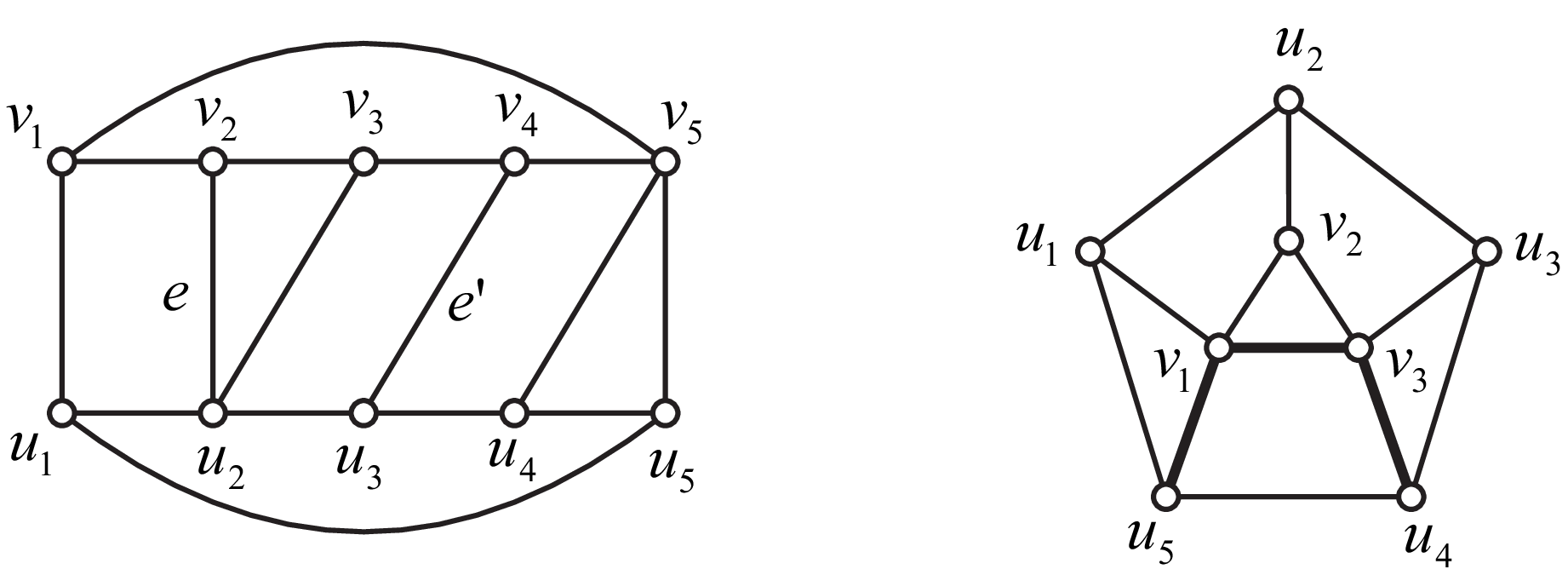}
   \caption{\label{f2-0}Left: the graph $G^*$; Right: the graph $G^{\triangle}$.}
\end{figure}
\subsection{A splicing of two large odd wheels}

A cycle $C$ of a graph $G$ is called a \emph{Hamilton cycle} of $G$ if $V(C)=V(G)$.
The next lemma treats the case in which two graphs obtained from odd wheels by allowing parallel spokes are spliced at their hubs. Lemma \ref{W5W5} will then show that every planar PM-compact splicing of two such graphs is necessarily a hub-to-hub splicing, and hence follows from this lemma.

\begin{Lem}
   \label{W5W5h}
   Let $G$ be obtained by splicing at their hubs two graphs, each formed from an odd wheel with at least six vertices by allowing parallel spokes. If $G$ is a planar PM-compact brick, then $G$ contains a spanning subgraph isomorphic to $G^*$.
\end{Lem}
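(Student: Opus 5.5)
The plan is to turn planarity into a precise combinatorial picture of the cut, and then use PM-compactness (via Proposition \ref{PM-P2} and the definition) and the brick criterion of Theorem \ref{2.1} to force that picture down to the one in $G^*$.

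\textbf{Setup.} Let $G_1$ and $G_2$ be the two wheel-like graphs, with hubs $h_1,h_2$ and rims $R_1=u_1u_2\dots u_ku_1$ and $R_2=v_1v_2\dots v_lv_1$, where $k,l\geq 5$ are odd. Let $C=\partial_G(V(R_1))$ be the splicing cut. Every edge of $C$ has the form $u_iv_j$, and every rim vertex meets at least one edge of $C$; a rim vertex meets several exactly when its spoke is multiple. Note that Lemma \ref{GF} gives nothing here, because every spoke (parallel or not) is forcing in its wheel: $G_1-h_1-u_i$ is a path on $k-1$ vertices.

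\textbf{Step 1: planarity gives an annulus.} I would first use planarity to pin down the embedding. $G$ contains a subdivision of a simple $3$-connected planar graph, so Corollary \ref{Bi3} applies and the embedding is essentially unique. In it, $R_1$ and $R_2$ bound an annulus, and the edges of $C$ are pairwise non-crossing "rungs" across it. So, reading both rims in the same rotational sense, the map $i\mapsto j$ along the rungs is monotone and cyclic. Each face inside the annulus is then a cycle made of a segment $u_aR_1u_b$, two rungs, and a segment $v_cR_2v_d$. Its length is the sum of the two segment lengths plus $2$. Parallel rungs at the same rim vertex give faces in which one segment is trivial.

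\textbf{Step 2: constraints from PM-compactness and the brick condition.} Next I would fix a convenient perfect matching and look for two disjoint alternating cycles, or directly for two disjoint even cycles $D_1,D_2$ with $G-V(D_1)-V(D_2)$ matchable. The natural candidates are even annulus faces, or short even cycles inside one side through a rung pair. The remainder is a union of rim paths joined by rungs, and its matchability is a parity check on the path lengths. This should force the following.
\begin{itemize}
\item Every annulus face has an odd segment-length sum, i.e.\ is an even cycle, only in a very restricted pattern.
\item Two even faces can never be far apart, since otherwise the complementary paths match up.
\item No rim vertex carries more than a bounded number of rungs.
\item Consecutive rungs cannot both skip vertices on the same side.
\end{itemize}
In parallel I would use Theorem \ref{2.1}: no pair $u_i,v_j$ covers $C$. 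This rules out degenerate patterns in which all rungs pass through one vertex on each side, and so guarantees that the rungs are spread around both rims.

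\textbf{Step 3: extracting $G^*$, and the main obstacle.} Combining the constraints, I expect to show that the rung pattern contains a sub-pattern of the specific shape realised by $G^*$, presumably a hub-to-hub splicing of two $W_5$'s with a particular staggered bijection. Deleting surplus rungs (parallel spokes) and contracting the remaining structure back along rim paths of even length should then exhibit $G^*$ as a spanning subgraph. Concretely, I would show that every rung not in the core pattern is redundant, and that the rim paths between consecutive core rungs are compatible with the edges of $G^*$.

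The main obstacle is the case analysis in Step 2. One has to handle arbitrary odd $k,l$ and arbitrary rung multiplicities, and for each bad configuration exhibit two disjoint even cycles with a matchable complement. My first attempt would be induction on the number of rungs. Take a face of the annulus with a long rim segment. Show that it yields a pair of disjoint even faces whose complement splits into even paths. Then reduce to the case where all faces are short, where a finite check should leave only the $G^*$ pattern.
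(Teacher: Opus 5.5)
\textbf{Verdict: genuine gap.} Your proposal is a plan rather than a proof. The constraints in Step~2 are announced (``this should force'') but never derived, the perfect matching you will work with is never specified, and Step~3 is explicitly conjectural.

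\textbf{The missing idea.} What makes the problem finite is that \emph{every perfect matching $M$ of $G$ contains at most three edges of $E[R_1,R_2]$}. The argument runs as follows.
After passing to the underlying simple graph, $E(G)=E(R_1)\cup E(R_2)\cup E[R_1,R_2]$. If $|M\cap E[R_1,R_2]|\ge 4$, then by parity it is at least $5$. Then $G[E(R_1)\cup E(R_2)\cup M]$ is a spanning subgraph, hence PM-compact by Proposition~\ref{PM-P1}, and it is an even subdivision of an odd prism. In it, any two cyclically consecutive $M$-rungs bound an $M$-alternating face, so two disjoint such faces contradict Proposition~\ref{PM-P2}.

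Combine this with the fact that the nontrivial cut $E[R_1,R_2]$ is not tight in a brick. That is far more useful than the covering criterion of Theorem~\ref{2.1} you invoke, and it yields a perfect matching with exactly three rungs $u_1v_1,u_2v_2,u_3v_3$. These cut each rim into three odd paths, and the whole case analysis is carried out relative to this frame:
\begin{itemize}
\item cross edges $a_ib_j$ between corresponding segments, other than the two frame rungs, must have $i-j$ odd;
\item each such vertex has at most one cross neighbour;
\item one pair of corresponding segments has exactly three edges each;
\item the other four segments are single edges.
\end{itemize}

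\textbf{The endgame fails as stated.} Since $|V(G^*)|=10$, a spanning copy of $G^*$ forces both rims to be $5$-cycles, and proving $k=l=5$ is the heart of the lemma.
\begin{itemize}
\item ``Contracting along rim paths of even length'' cannot produce a \emph{spanning} subgraph.
\item ``Reduce to short faces, then a finite check'' is not finite. Odd prisms have all annulus faces of length $4$ and arbitrarily long rims, so face lengths do not bound $k$ and $l$. The rim lengths are bounded by the three-rung fact again. Five consecutive internal vertices of one segment are forced, by $\delta(G)\ge 3$, planarity and the parity constraint, to carry five pairwise disjoint rungs. These extend to a perfect matching, which is a contradiction.
\item Your induction on the number of rungs has no invariant, since deleting a rung need not preserve the brick property.
\end{itemize}

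\textbf{Your guess about $G^*$ is off.} By planarity, a hub-to-hub splicing of two simple $W_5$'s is the pentagonal prism. Its five rungs form a perfect matching, so it is not PM-compact. $G^*$ instead has six rungs, with one rim vertex on each side carrying a doubled spoke.
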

\begin{proof}
   Note that taking the underlying simple graph preserves the hypotheses and the existence of a spanning subgraph $G^*$. Thus we may assume that $G$ is simple.
   Write $G\in G_1(h_1)\odot G_2(h_2)$, where, for $i=1,2$, $G_i$ is obtained from the odd wheel $W_{k_i}$ by allowing parallel spokes, $k_i\geq 5$, and $h_i$ and $R_i$ are its hub and rim, respectively.
   Recall that $E[R_1,R_2]$ denotes the set of edges of $G$ with one end in $V(R_1)$ and the other in $V(R_2)$.
   \begin{Cla}
   \label{nc1}
   Each perfect matching of $G$ contains at most three edges in $E[R_1,R_2]$.
   \end{Cla}
   \begin{proof}
   Suppose that $G$ contains a perfect matching $M$ with $|M\cap E[R_1,R_2]|\geq 4$.
   Since $|V(R_1)|$ is odd, $|M\cap E[R_1,R_2]|$ is an odd integer.
   Let $G'=G[E(R_1)\cup E(R_2)\cup M]$.
   Since $G$ is PM-compact and $G'$ is a spanning subgraph of $G$, $G'$ is PM-compact by Prop. \ref{PM-P1}. Since $G'$ is a subdivision of an odd prism, Corollary \ref{Bi3} implies that $G'$ has a unique plane embedding.
   List the edges of $M\cap E[R_1,R_2]$ in their cyclic order along $R_1$. Every pair of successive edges in this list lies on an $M$-alternating facial cycle of $G'$. Consequently, $G'$ contains two vertex-disjoint $M$-alternating cycles, contradicting Prop. \ref{PM-P2}.
   \end{proof}
   Since $G$ is a brick, the nontrivial edge cut $E[R_1,R_2]$ is not a tight cut of $G$ and thus $G$ contains a perfect matching $M$ such that $|M\cap E[R_1,R_2]|\geq 3$. By Claim \ref{nc1},  $M\cap E[R_1,R_2]$ has exactly three edges $u_1v_1$, $u_2v_2$ and $u_3v_3$.
    Let $P_i=u_iR_1u_{i+1}$ and $P_i'=v_iR_2v_{i+1}$ for $1\leq i\leq 3$, where the subscripts are taken modulo 3. Since all internal vertices of $P_i$ and $P_i'$ are matched by $M\cap (E(P_i)\cup E(P_i'))$, each of $P_i$ and $P_i'$ has an odd number of edges.
   \begin{figure}[h]
      \centering
      \includegraphics[scale=0.24]{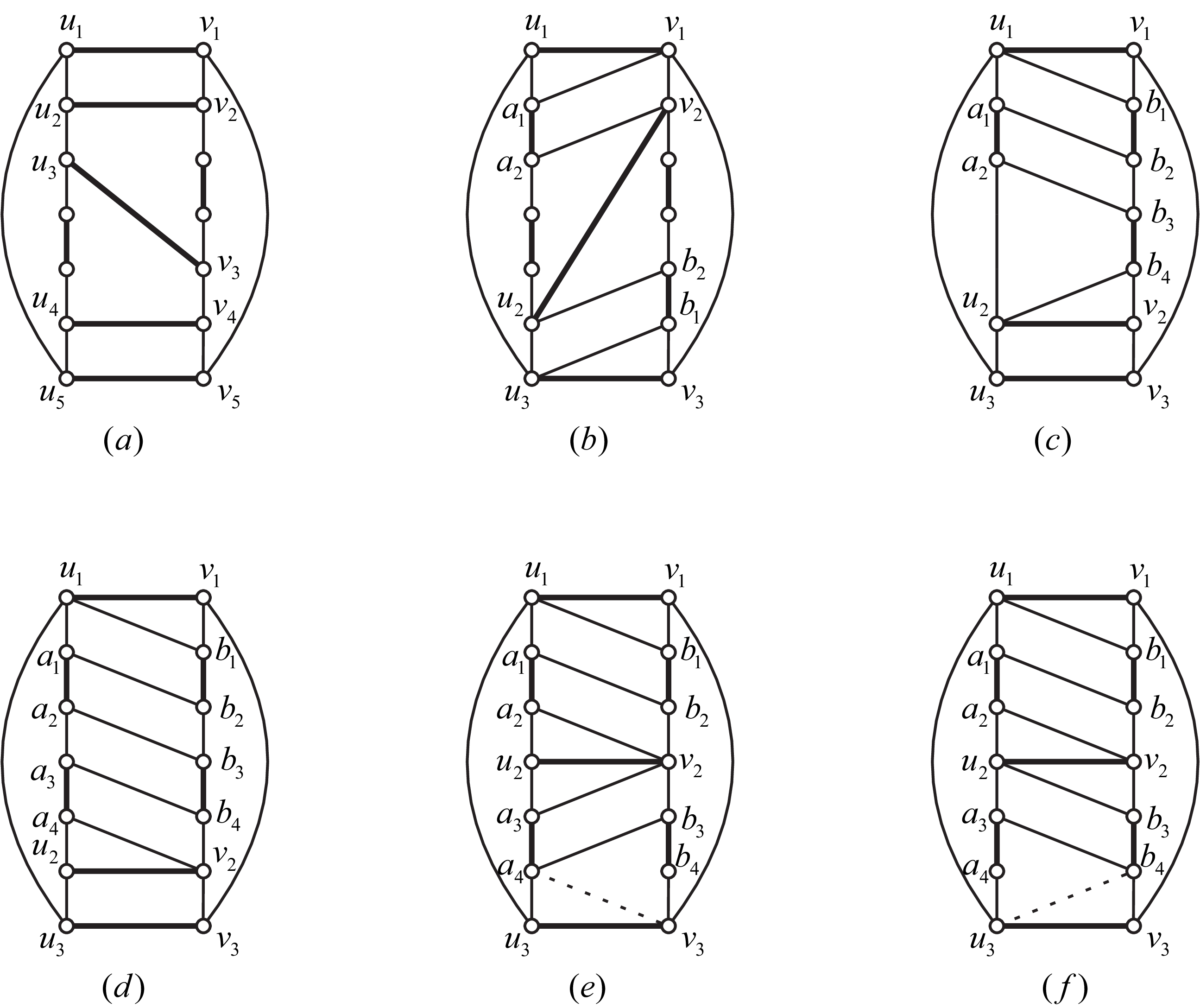}
      \caption{\label{f2-3-1}Illustration for the proof of Lemma \ref{W5W5h}.}
   \end{figure}
\begin{Cla}
   \label{new-cla1}
 $|V(P_i)|\geq 4$ and $|V(P_i')|\geq 4$ for some $i\in \{1,2,3\}$.
\end{Cla}
\begin{proof}
   Suppose that for every $i\in\{1,2,3\}$, either $|V(P_i)|=2$ or $|V(P_i')|=2$. Since $|V(R_i)|\geq 5$ for $i=1,2$, at least one of $P_1$, $P_2$ and $P_3$ has at least four vertices, and similarly for $P_1'$, $P_2'$ and $P_3'$.
   By symmetry, we may relabel the paths so that $|V(P_1)|\geq 4$ and $|V(P_2')|\geq 4$. Consequently, $|V(P_1')|=|V(P_2)|=2$, as illustrated in Fig. \ref{f2-3-1} $(b)$.
   Let $P_1=u_1a_1a_2\dots a_{2p}u_2$ and $P_2'=v_2b_{2q}\dots b_2b_1v_3$, where $p$ and $q$ are positive integers.
   Since $P_1'=v_1v_2$ and $P_2=u_2u_3$, $N_G(a_i)\cap V(R_2)\subseteq \{v_1,v_2\}$ and $N_G(b_i)\cap V(R_1)\subseteq \{u_2,u_3\}$ by the planarity of $G$. If $a_1v_2\in E(G)$, then $a_1v_2u_2a_{2p}a_{2p-1}\dots a_1$ and $u_1v_1+P_3'+v_3u_3+P_3$ are two vertex-disjoint $M$-alternating cycles, a contradiction to Prop. \ref{PM-P2}. So $a_1v_1\in E(G)$. If $a_2v_1\in E(G)$, then $a_2v_1u_1a_1a_2$ and $u_2v_2+P_2'+v_3u_3u_2$ are
   two vertex-disjoint $M$-alternating cycles, a contradiction to Prop. \ref{PM-P2}. So $a_2v_2\in E(G)$. By symmetry,
   $\{b_{1}u_3,b_{2}u_2\}\subseteq E(G)$. Then $u_2b_2b_3\dots b_{2q}v_2a_2a_3\dots a_{2p}u_2$ and $u_1a_1v_1+P_3'+v_3b_1u_3+P_3$ are two vertex-disjoint even cycles containing all the vertices of $G$, a contradiction.
\end{proof}
  By Claim \ref{new-cla1},
  we can assume that $|V(P_1)|\geq 4$ and $|V(P_1')|\geq 4$.
 Label the internal vertices so that $P_1=u_1a_1a_2\dots a_{2p}u_2$ and $P_1'=v_1b_1b_2\dots b_{2q}v_2$, where $p$ and $q$ are positive integers.
  Let $u_1=a_0$, $u_2=a_{2p+1}$, $v_1=b_0$ and $v_2=b_{2q+1}$.
  \begin{Cla}
  \label{nnn4}
   If $a_ib_j\in E(G)\setminus \{a_0b_0,a_{2p+1}b_{2q+1}\}$ ($0\leq i\leq 2p+1$, $0\leq j\leq 2q+1$), then $i-j\equiv 1$ (mod 2). For the graph $G-u_1v_1-u_2v_2$, each vertex of $P_1$ (resp. $P_1'$) has at most one neighbor in $V(P_1')$ (resp. $V(P_1)$).
  \end{Cla}
  \begin{proof}
  Suppose that $i-j\equiv 0$ (mod 2). If $i$ and $j$ are odd, then, as in the argument for $a_1v_2\in E(G)$ in Claim \ref{new-cla1}, $\{a_ib_j,u_2v_2\}$ and $\{u_1v_1,u_3v_3\}$ belong to two vertex-disjoint $M$-alternating cycles, respectively, a contradiction to Prop. \ref{PM-P2}. If $i$ and $j$ are even, then, as in the argument for $a_2v_1\in E(G)$ in Claim \ref{new-cla1}, $\{a_ib_j,u_1v_1\}$ and $\{u_2v_2,u_3v_3\}$ belong to two vertex-disjoint $M$-alternating cycles, respectively, again a contradiction. So $i-j\equiv 1$ (mod 2).

  Next we only consider the edges in $G-u_1v_1-u_2v_2$. If $P_1$ contains a vertex $a_{i'}$ with two different neighbors $b_{i_1}$ and $b_{i_2}$ in $V(P_1')$, then every vertex on the subpath $b_{i_1}P_1'b_{i_2}$ has a neighbor on $R_1$; planarity forces that neighbor to be $a_{i'}$.
  Further, $b_{i_1}P_1'b_{i_2}$ contains a vertex $b_{k}$ with $k-i'\equiv 0$ (mod 2), a contradiction. So each vertex of $P_1$ has at most one neighbor in $V(P_1')$. By symmetry,
  each vertex of $P_1'$ has at most one neighbor in $V(P_1)$.
  \end{proof}

\begin{Cla}
   \label{new-cla3}
   $|V(P_1)|=|V(P_1')|=4$.
\end{Cla}
\begin{proof}
Recall that $|V(P_1)|\geq 4$ and $|V(P_1')|\geq 4$.
 Suppose that $|V(P_1)|>4$ or $|V(P_1')|>4$. By symmetry, we may assume that $|V(P_1')|>4$. Since $P_1'$ is an odd path, $|V(P_1')|\geq 6$. If $|V(P_1')|>6$, then $P_1'$ has a subpath with five inner vertices; let $S$ be the vertex set of these five vertices. By $\delta(G)\geq 3$, planarity, and Claim \ref{nnn4}, $E[S,V(P_1)]$ is a matching $M'$ of size five whose ends on $P_1$ occur in the same cyclic order and alternate in index parity. Deleting $V(M')$ from the two odd rims leaves only even-order paths, so $M'$ extends to a perfect matching of $G$, contradicting Claim \ref{nc1}. Hence $|V(P_1')|=6$.
If $|V(P_1)|>4$, then $|V(P_1)|=6$ by symmetry. By Claim \ref{nnn4}, we can also find a perfect matching of $G$ that contains five edges of $E[P_1,P_1']$ (see Fig. \ref{f2-3-1} ($d$)), contradicting Claim \ref{nc1}. So $|V(P_1)|=4$.
By Claim \ref{nnn4}, $a_ib_{i+1}\in E(G)$ for $0\leq i\leq 3$. As shown in Fig. \ref{f2-3-1} $(c)$,  $a_1a_2b_3b_2a_1$ and $u_1b_1v_1+P_3'+P_2'+v_2b_4u_2+P_2+P_3$ are two vertex-disjoint even cycles containing all the vertices of $G$, a contradiction.
\end{proof}

\begin{Cla}
   \label{new-cla4}
   For $i=2,3$, $|V(P_i)|=|V(P_i')|=2$.
\end{Cla}

\begin{proof}
By symmetry, it suffices to show that $|V(P_2)|=|V(P_2')|=2$.
 By Claims \ref{nnn4} and \ref{new-cla3}, $E[P_1,P_1']\setminus \{u_1v_1,u_2v_2\}$ is a matching $M_1$ of size three, and $M_1=\{a_0b_1,a_1b_2,a_2b_3\}$ or $M_1=\{a_1b_0,a_2b_1,a_3b_2\}$. First we consider the case that $M_1=\{a_0b_1,a_1b_2,a_2b_3\}$.
 If both $P_2$ and $P_2'$ have at least four vertices, then by the argument of Claim \ref{new-cla3}, $|V(P_2)|=|V(P_2')|=4$ and $E[P_2,P_2']\setminus \{u_2v_2,u_3v_3\}$ is a matching of size three, say $M_2$.
 If one of $P_2$ and $P_2'$ has at least four vertices and the other has exactly two vertices, then $E[P_2,P_2']\setminus \{u_2v_2,u_3v_3\}$ is a matching of size two by the argument of Claim \ref{nnn4}, say $M_2'$.

 If $M_1\cup M_2$ (resp. $M_1\cup M_2'$) is a matching, then it contains five edges of a perfect matching of $G$, a contradiction to Claim \ref{nc1}. Otherwise, $a_3v_2\in E(G)$. Deleting the vertex sets of the two vertex-disjoint even cycles $u_1b_1b_2a_1u_1$ and $a_2v_2a_3u_2a_2$ leaves a subgraph with a Hamilton cycle, a contradiction.
 In this case, $|V(P_2)|=|V(P_2')|=2$. For $M_1=\{a_1b_0,a_2b_1,a_3b_2\}$, the preceding argument shows that $|V(P_2)|=|V(P_2')|=2$.
\end{proof}
By Claims \ref{new-cla3} and \ref{new-cla4}, $k_1=k_2=5$, so the underlying graph of each $G_i$ is $W_5$. Recall that $E[P_1,P_1']\setminus \{u_1v_1,u_2v_2\}$ is a matching $\{a_0b_1,a_1b_2,a_2b_3\}$ or $\{a_1b_0,a_2b_1,a_3b_2\}$ (in Claim \ref{new-cla4}), and thus $G^*$ is a spanning subgraph of $G$.
\end{proof}

\begin{Lem}
   \label{W5W5} Let $G$ be a splicing of two graphs, each formed from an odd wheel with at least six vertices by allowing parallel spokes.
    If $G$ is a planar PM-compact brick, then $G$ contains a spanning subgraph isomorphic to $G^*$.
\end{Lem}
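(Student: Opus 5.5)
The plan is to reduce Lemma \ref{W5W5} to Lemma \ref{W5W5h}. I would show that in any planar PM-compact brick $G\in G_1(x_1)\odot G_2(x_2)$ of this kind, both splicing vertices $x_1,x_2$ must be the hubs $h_1,h_2$. Write $G_i$ for the graph obtained from $W_{k_i}$, $k_i\geq 5$, by allowing parallel spokes, with rim $R_i$. The first observation is that the forcing edges of $G_i$ are exactly its spokes, counted with multiplicity. For a rim vertex $v$, $G_i-h_i-v$ is an even path, which has a unique perfect matching. For a rim edge $uv$, $G_i-u-v$ consists of $h_i$ together with an odd path, and it has several perfect matchings because $k_i\geq 5$.

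Next I apply Lemma \ref{GF} to every edge $y_1y_2$ of the cut $\partial_G(V(G_1)\setminus\{x_1\})$. For each such edge, at least one of the two corresponding edges $x_1y_1$ and $x_2y_2$ is a spoke. This rules out the case where both $x_1$ and $x_2$ are rim vertices, as follows.

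\begin{itemize}
\item Each $x_i$ has exactly two rim edges, to $r_i,r_i'$ say. The remaining edges at $x_i$ are spokes to $h_i$.
\item By Lemma \ref{GF}, $\theta$ must send the two rim edges at $x_1$ to spokes at $x_2$. Symmetrically, it must send the two rim edges at $x_2$ to spokes at $x_1$.
\item Every cut edge is then of the form $r_1h_2$, $r_1'h_2$, $h_1r_2$, $h_1r_2'$ or $h_1h_2$. So the pair $\{h_1,h_2\}$ covers the cut, and by Theorem \ref{2.1} $G$ is not a brick, a contradiction.
\end{itemize}

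The remaining case, which I expect to be the main obstacle, is that exactly one splicing vertex is a hub, say $x_1$ is a rim vertex and $x_2=h_2$. Here Lemma \ref{GF} gives nothing, since every edge at $h_2$ is forcing. Theorem \ref{2.1} also gives nothing directly, since it only says that $r_1$ and $r_1'$ are joined to different rim vertices of $G_2$. I would therefore produce two vertex-disjoint even cycles whose removal leaves a matchable graph, contradicting PM-compactness.

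\begin{itemize}
\item Let $y,y'$ be the (distinct) rim vertices of $R_2$ joined to $r_1,r_1'$. The spokes at $x_1$ become edges from $h_1$ to further rim vertices of $R_2$. Since $d_G(x_1)=d(h_2)\geq k_2\geq 5$, there are at least three such edges.
\item By planarity these edges attach in the cyclic order of $R_2$. Hence $R_2$ is split into segments between consecutive attachment points.
\item Take $C_1$ to be an even cycle formed by $h_1$, two consecutive attachment vertices on $R_2$ and the rim segment between them, choosing the segment so that the parity works.
\item Take $C_2$ to be an even cycle inside the path $R_1-x_1$ closed through $y$ and $y'$ along the rest of $R_2$, or, if that is awkward, an even cycle formed by $h_1$ and a rim segment of $R_1-x_1$.
\end{itemize}

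I would check matchability of the remainder by direct parity counting on the remaining rim paths, which all have even order. I would first try the analogue of the argument in Claim \ref{new-cla1} with a perfect matching containing $r_1y$. If the parity bookkeeping fails in some subcase, I would instead bound the number of edges from $R_1$ to $R_2$ in a perfect matching, as in Claim \ref{nc1}, and derive a tight-cut contradiction.

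Once both splicing vertices are shown to be hubs, $G$ is a hub-to-hub splicing. Lemma \ref{W5W5h} then gives that $G$ has a spanning subgraph isomorphic to $G^*$.
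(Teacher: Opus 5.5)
Your reduction to Lemma~\ref{W5W5h}, your description of the forcing edges, and the case where $x_1$ and $x_2$ are both rim vertices are all correct. In that case Lemma~\ref{GF} forbids $\theta$ from pairing a rim edge with a rim edge, so $\{h_1,h_2\}$ covers the cut and Theorem~\ref{2.1} is violated. (The paper gets the same contradiction from a perfect matching with three cut edges, one of which must avoid both hubs.)

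The gap is the mixed case $x_1\in V(R_1)$, $x_2=h_2$, which you rightly call the main obstacle. There you give a menu of tentative constructions rather than a proof, and as stated they do not work:
\begin{itemize}
\item You only record that $h_1$ has at least three neighbours on $R_2$. The bijection gives much more. The rim edges $x_1r_1,x_1r_1'$ are simple and every vertex of $R_2$ carries a spoke at $h_2$, so \emph{every} vertex of $R_2$ other than $y,y'$ is adjacent to $h_1$.
\item Consequently, a cycle formed by $h_1$, two consecutive attachment vertices and the segment between them is typically a triangle $h_1w_iw_{i+1}$, which is odd. An even cycle through $h_1$ must span three consecutive attachment vertices.
\item Your alternative second cycle, through $h_1$ and a segment of $R_1-x_1$, reuses $h_1$, which is already on $C_1$.
\item The fallback via Claim~\ref{nc1} is empty here. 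Since $\{h_1,r_1,r_1'\}$ already covers the cut, at most three cut edges per perfect matching holds automatically.
\item Your use of planarity (``the edges attach in cyclic order'') extracts nothing.
\end{itemize}

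The missing idea is the real planarity consequence. $G[X_1]=G_1-x_1$ is connected, so it lies in one face of the cycle $R_2=G[X_2]$. The path $Q=y\,r_1(R_1-x_1)r_1'\,y'$ splits that face in two, and $h_1$ lies on one side. So all neighbours of $h_1$ on $R_2$ lie in one closed arc between $y$ and $y'$. Combined with the fact above, this forces $yy'\in E(R_2)$.

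This is more than parity bookkeeping. Suppose the arc between $y$ and $y'$ with an odd number of interior vertices had exactly one. Then closing $Q$ along the other arc leaves only $h_1$ and that vertex, with no room for a second cycle.

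With $yy'\in E(R_2)$ the argument closes. Write $R_2=y\,y'\,w_1\cdots w_m\,y$ with $m=k_2-2\ge 3$ odd. Then $Q+y'y$ (of length $k_1+1$) and $h_1w_1w_2w_3h_1$ are disjoint even cycles. Their removal leaves the even path $w_4\cdots w_m$, which is matchable, contradicting PM-compactness.

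This is exactly the paper's argument, phrased there through a perfect matching $M$ with three cut edges and two disjoint $M$-alternating cycles. One of those cycles runs through the hub and a rim segment. The other is the facial cycle through the two cut edges at the rim neighbours of the spliced rim vertex.
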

\begin{proof} Let $G$ be a splicing of $G_1$ and $G_2$ at $x_1$ and $x_2$, respectively. For $i=1,2$, let $G_i$ be obtained from the odd wheel $W_{k_i}$ by allowing parallel spokes, where $k_i\geq 5$, and let $h_i$ and $R_i$ be its hub and rim, respectively.
   By Lemma \ref{W5W5h}, it suffices to show that $x_1$ and $x_2$ are hubs of $G_1$ and $G_2$,
   respectively. Let $X_i=V(G_i)\setminus \{x_i\}$ for $i=1,2$. Since $G$ is a brick, the nontrivial edge cut $E_G[X_1,X_2]$ is not a tight cut of $G$. So $G$ contains a perfect matching $M$ such that $|M\cap E_G[X_1,X_2]|\geq 3$.

   Suppose that one of $x_1$ and $x_2$ is not the hub of its corresponding wheel, say $x_2\neq h_2$.
   Then $x_2$ has exactly three distinct neighbors in $G_2$, which cover the edges in $\partial_G(X_2)$. Thus $M\cap E_G[X_1,X_2]$ contains exactly three edges $e_1$, $e_2$ and $e_3$.
   If $x_1\neq h_1$, then we can assume that $N_{G_i}(x_i)=\{h_i,y_i,z_i\}$.
   Then at least one of $\{e_1,e_2,e_3\}$ is not incident with any vertex in $\{h_1,h_2\}$; let $e_1=y_1y_2$.
   It follows that $y_1x_1$ is not a forcing edge of $G_1$. At the same time,
   $y_2x_2$ is not a forcing edge of $G_2$, a contradiction to Lemma \ref{GF}.
  So $x_1=h_1$.

   \begin{figure}[h]
      \centering
      \includegraphics[scale=0.25]{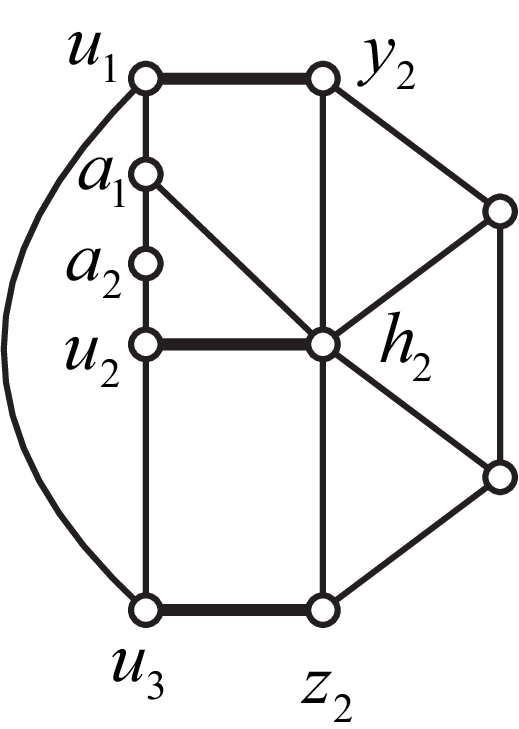}
      \caption{\label{f2-2-1}Illustration for the proof of Lemma \ref{W5W5}.}
   \end{figure}

   Let $R_1$ be the rim of $G_1$. Then $X_1=V(R_1)$.
   Recall that $N_{G_2}(x_2)=\{h_2,y_2,z_2\}$.
   Relabel the edges and vertices so that $e_1=u_1y_2$, $e_2=u_2h_2$ and $e_3=u_3z_2$, where $\{u_1,u_2,u_3\}\subseteq V(R_1)$.
   Index $u_1,u_2,u_3$ in their cyclic order on $R_1$. They divide $R_1$ into three edge-disjoint paths $P_1,P_2,P_3$. Since $e_1,e_2,e_3\in M$, each $P_i$ has an odd number of edges.
   Since $|V(R_1)|=k_1\geq 5$, at least one of $\{P_1,P_2,P_3\}$ has at least three edges, say $P_1$.
   Since $G$ is PM-compact and $E_G[X_1,X_2]$ is a separating cut of $G$, $G_2$ is PM-compact by Lemma \ref{SC}.
   Since neither $x_2y_2$ nor $x_2z_2$ is a forcing edge of $G_2$, neither $x_2y_2$ nor $x_2z_2$ is a multiple edge of $G_2$ by Lemma \ref{MF}.
   Thus, each of $y_2$ and $z_2$ has a unique neighbor in $V(R_1)$, i.e., $u_1$ and $u_3$ respectively. Then for each internal vertex $x$ of $P_1$, $xh_2\in E(G)$. Since $u_2h_2\in E(G)$, $u_2$ must be an end vertex of $P_1$ by the planarity of $G$. Let $P_1=u_1a_1a_2\dots a_{2p}u_2$.
  Since $a_1h_2\in E(G)$, $u_2h_2\in M$ and all $a_i$'s are matched by $M\cap E(P_1)$, $a_1a_2\dots a_{2p}u_2h_2a_1$ is an $M$-alternating cycle, denoted by $C_1$. As shown in Fig. \ref{f2-2-1}, $u_1y_2$ and $u_3z_2$ belong to the same facial cycle $C_2$ (exterior facial cycle) of $G$ and $C_2$ is an $M$-alternating cycle vertex-disjoint from $C_1$, a contradiction to Prop. \ref{PM-P2}.
   So $x_1=h_1$ and $x_2=h_2$. By Lemma \ref{W5W5h}, $G$ contains a spanning subgraph isomorphic to $G^*$.
\end{proof}

\begin{Lem}\label{GstarChain}
   If a planar PM-compact brick $G$ contains a spanning subgraph isomorphic to $G^*$, then $G$ admits the chain in Theorem \ref{MaT}.
\end{Lem}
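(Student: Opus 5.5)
The plan is to turn the spanning copy of $G^*$ into a short explicit chain. From the proof of Lemma \ref{W5W5h}, $G^*$ has ten vertices. It is obtained by splicing two copies of $W_5$ at their hubs. The rims are $R_1=u_1a_1a_2u_2u_3u_1$ and $R_2=v_1b_1b_2v_2v_3v_1$. The edges between the rims are $u_1v_1,u_2v_2,u_3v_3$ together with the three edges $u_1b_1,a_1b_2,a_2v_2$ (the other labelling $\{a_1v_1,a_2b_1,u_2b_2\}$ is symmetric). In particular $G^*$ contains the triangles $T=\{u_2,v_2,a_2\}$ and $\{u_1,v_1,b_1\}$.

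\textbf{Step 1: pin down $G$.} First I would bound which edges $G$ can have beyond those of $G^*$. Because $G^*$ is a subdivision-free simple $3$-connected planar graph, Theorem \ref{Wh3} fixes its embedding. Every extra edge of $G$ must lie inside a face of $G^*$, so only finitely many chords are possible, and each is between two vertices of a common face. For each such chord I would either exhibit two vertex-disjoint even cycles whose deletion leaves a matchable graph, contradicting PM-compactness, or keep it in a short list of admissible chords. The contradictions would be found the same way as in the proofs of Claims \ref{new-cla1}--\ref{new-cla4}. Multiple edges can also occur. By Lemma \ref{MF} they must be forcing edges, which again restricts them to a short list.

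\textbf{Step 2: one contraction step.} Take the triangle $T=\{u_2,v_2,a_2\}$, or the triangle $\{u_1,v_1,b_1\}$ if some admissible chord makes $T$ unsuitable. Contracting a triangle keeps the graph planar. The graph $G/\overline{T}$ is the triangle $T$ together with one vertex joined to it, which is $K_4$ up to multiple edges and hence matching covered. I would check that $G/T$, an eight-vertex graph, is $3$-connected and bicritical, i.e.\ a brick. This is a finite check on the underlying graph obtained from Step 1. It also shows that $\partial(T)$ is a separating cut, so Lemma \ref{SC} gives that $G/T$ is PM-compact. Thus $G_1=G/T$ is a planar PM-compact brick.

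\textbf{Step 3: finish the chain.} Next I would identify $G/T$ concretely; I expect its underlying graph to be close to a prism or a small wheel-like graph on eight vertices. I would then repeat the argument: find a triangle whose contraction gives a six-vertex planar brick. That brick is PM-compact by Lemma \ref{SC} again, and should be $W_5$, $\overline{C_6}$, or $K_4$ with added edges. If needed, one more triangle contraction reaches a graph whose underlying graph is $K_4=W_3$, an odd wheel. Some care is needed at $\overline{C_6}$: contracting either of its triangles gives $K_4$ with multiple edges, and this is fine.

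The main obstacle is Step 1 together with the brick verification in Step 2. Extra chords could in principle destroy $3$-connectivity or bicriticality after contraction, or change which triangle is best to contract. The work lies in a careful case analysis on the faces of $G^*$, using PM-compactness to exclude most chords. For the chords that survive, I would choose the triangle to contract so that Theorem \ref{2.1}-type covering obstructions do not occur.
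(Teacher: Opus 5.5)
Your overall plan matches the paper's: a short explicit chain of triangle contractions ending at a four-vertex graph with underlying graph $K_4$, with planarity preserved by contraction and PM-compactness passed down by Lemma \ref{SC} once each triangle cut is known to be separating. The gap is that the one substantive point, that each contracted graph is a brick, is never established. You defer it to Step 1, a classification of all chords and parallel edges $G$ may have beyond $G^*$, which you do not carry out. The reason you give for needing it is also false: extra edges cannot ``destroy $3$-connectivity or bicriticality after contraction''. Both properties are preserved under adding edges on a fixed vertex set, and each $G_i$ contains the image of $G^*$ as a spanning subgraph, so it suffices to check the images of $G^*$ alone.

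The paper does this via Theorem \ref{2.1}, working backwards from the end of the chain. The last graph has four vertices and contains $K_4$, so it is a brick. Each triangle cut $\partial(V(T_i))$ contains a matching of size three coming from $G^*$, so $G_i/\overline{V(T_i)}$ has underlying graph $K_4$ and no two vertices cover the cut. Hence each $G_i$ is a brick. Additional edges of $G$ only enlarge these cuts, so Step 1, together with the appeals to Theorem \ref{Wh3} and Lemma \ref{MF}, can be dropped entirely.

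Your Step 3 is also left open: you neither name nor show the existence of the triangles after the first contraction. In your labelling the chain is as follows.
\begin{itemize}
\item Contract $\{a_2,u_2,v_2\}$, whose cut contains $\{a_2a_1,u_2u_3,v_2v_3\}$, to a vertex $a$. This creates the triangle $\{a,a_1,b_2\}$.
\item Contract $\{a,a_1,b_2\}$, whose cut contains $\{a_1u_1,b_2b_1,au_3\}$, to a vertex $b$. The result is $\overline{C_6}$ plus the edge $bu_1$.
\item Contract $\{u_1,v_1,b_1\}$, whose cut contains $\{u_1u_3,v_1v_3,b_1b\}$. This leaves $K_4$ with one doubled edge.
\end{itemize}
With this explicit sequence and the monotonicity observation above (or Theorem \ref{2.1}), the proof is complete without any case analysis on the faces of $G^*$.
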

\begin{figure}[ht]
   \centering
   \includegraphics[scale=0.28]{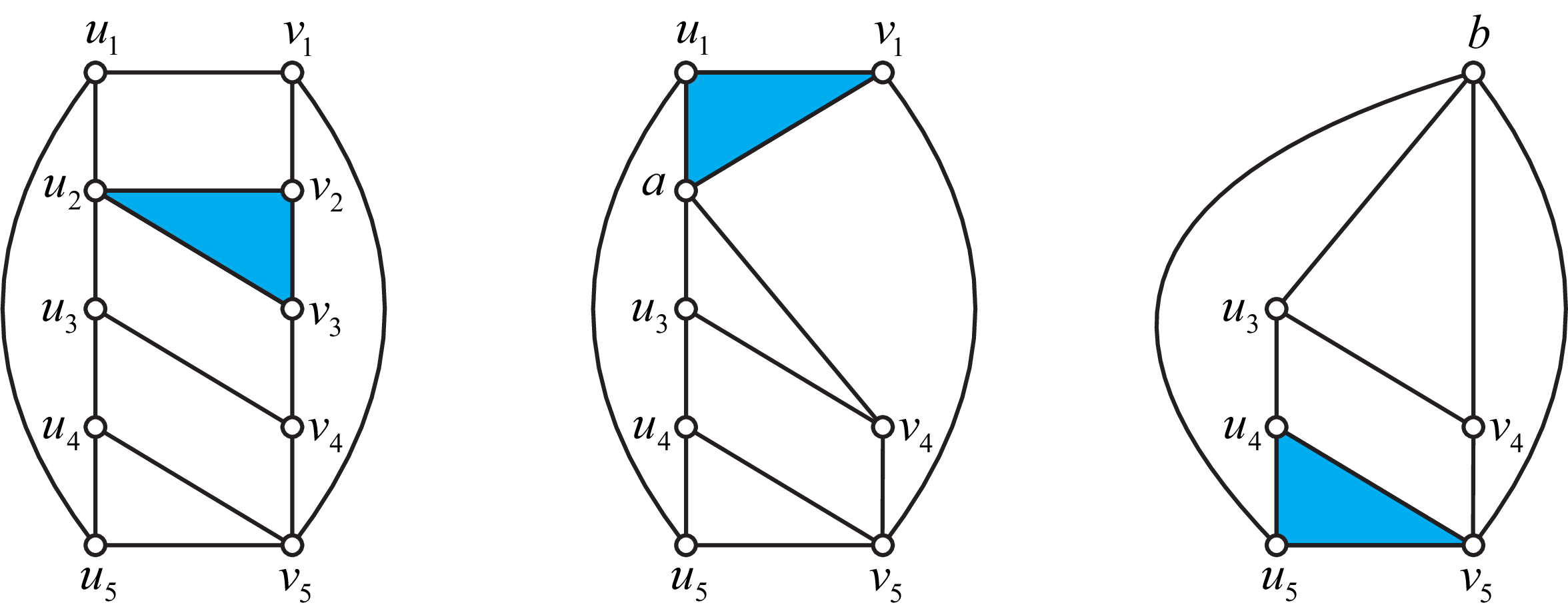}
   \caption{A spanning subgraph of $G_0$ isomorphic to $G^*$ (left) and its images in $G_1$ (middle) and $G_2$ (right). The shaded triangles are contracted.\label{fig:GstarChain}}
\end{figure}
\begin{proof}
   Identify a spanning subgraph of $G$ isomorphic to $G^*$ with the left picture in Fig. \ref{fig:GstarChain}. Set $G_0=G$, $G_1=G_0/\{u_2,v_2,v_3\}\rightarrow a$, $G_2=G_1/\{a,u_1,v_1\}\rightarrow b$, and $G_3=G_2/\{u_4,u_5,v_5\}\rightarrow c$.
   The underlying graph of $G_3$ is $K_4$, so $G_3$ is a brick. For each of the three triangle cuts, the opposite contraction has underlying graph $K_4$, and the cut contains a matching of size three (see Fig. \ref{fig:GstarChain}). Applying Theorem \ref{2.1} from $G_3$ shows that $G_2$ and $G_1$ are bricks.
   The three cuts are therefore separating, and Lemma \ref{SC} gives the PM-compactness of $G_1,G_2,G_3$. Triangle contractions preserve planarity, and $K_4$ is an odd wheel. Thus $G_0,G_1,G_2,G_3$ is the required chain.
\end{proof}

\subsection{A large odd wheel and \texorpdfstring{$K_4$}{K4}}
Lemma \ref{W5W5} identifies a spanning subgraph isomorphic to $G^*$ in every planar PM-compact splicing of two graphs formed from odd wheels with at least six vertices by allowing parallel spokes.
Next we consider a splicing of a wheel with possible parallel spokes and a graph whose underlying graph is $K_4$.

\begin{Lem}
   \label{WkPath}
   Let $G_1$ be obtained from $W_k$ ($k\geq 5$) by allowing parallel spokes, with hub $h$, and let $G_2$ have underlying graph $K_4$. If $G\in G_1(h)\odot G_2(v)$ is a planar PM-compact brick, then $k=5$ and $G$ has a spanning subgraph isomorphic to $G^{\triangle}$ (see Fig. \ref{f2-0}).
\end{Lem}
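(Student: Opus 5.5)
We may assume $G$ is simple, since passing to the underlying graph preserves all hypotheses and the existence of a spanning $G^{\triangle}$ (parallel edges of $G_1$ are spokes, and by Lemma \ref{MF} parallel edges of $G_2$ are forcing). Let $R$ be the rim of $G_1$, and let $N_{G_2}(v)=\{y_1,y_2,y_3\}$, with $G_2-v$ the triangle $y_1y_2y_3$. Then $X_1=V(R)$, $X_2=\{y_1,y_2,y_3\}$, and every edge of $\partial_G(X_1)$ joins a rim vertex to some $y_j$.

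Since $G$ is a brick, the nontrivial cut $\partial(X_1)$ is not tight. So some perfect matching $M$ of $G$ meets it in at least three edges. Because $|X_2|=3$, it meets it in exactly three edges $u_1y_1,u_2y_2,u_3y_3$, with $u_1,u_2,u_3$ in cyclic order on $R$. They cut $R$ into paths $P_1,P_2,P_3$ ($P_i=u_iRu_{i+1}$), each of odd length, whose internal vertices are matched by $M\cap E(R)$. By planarity, every neighbour in $X_2$ of an internal vertex of $P_i$ lies in $\{y_i,y_{i+1}\}$, and the edges between $P_i$ and $\{y_i,y_{i+1}\}$ are non-crossing. Moreover $\delta(G)\ge 3$ forces each internal rim vertex to have at least one neighbour among the $y$'s.

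The core of the proof mimics Claims \ref{new-cla1}--\ref{new-cla4} of Lemma \ref{W5W5h}, using Prop. \ref{PM-P2}. Write $P_1=u_1a_1\dots a_{2p}u_2$. Consider an edge $a_iy_j$ with $j\in\{1,2\}$. If its parity is "wrong" (for example $a_1y_2$, or $a_2y_1$), it closes an $M$-alternating cycle inside $P_1\cup\{y_1,y_2\}$. Meanwhile the facial cycle through $u_3y_3$ and the rest of $R$, together with the triangle, gives a disjoint $M$-alternating cycle, which is a contradiction. This yields a parity rule, analogous to Claim \ref{nnn4}: $a_i$ may be adjacent only to $y_1$ when $i$ is odd and only to $y_2$ when $i$ is even. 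Combined with planarity, this forces each internal vertex of $P_1$ to have exactly one $y$-neighbour, and the neighbours switch from $y_1$ to $y_2$ only once along $P_1$. The parity rule then makes $p\le 1$; otherwise $y_1$ would be adjacent to both $a_1$ and $a_3$, while $a_2$ must be adjacent to $y_2$, which crosses. So $|V(P_i)|\in\{2,4\}$ for each $i$.

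Next we rule out two of the $P_i$ having four vertices. Suppose $|V(P_1)|=|V(P_2)|=4$. Using the forced edges $a_1y_1,a_2y_2$ on $P_1$ and the analogous ones on $P_2$, we exhibit two vertex-disjoint even cycles whose removal leaves a matchable graph. One is a 4-cycle such as $y_1a_1a_2y_2$ with chord adjustments; the other uses the remaining rim and $y_3$. This contradicts PM-compactness. Since $k\ge 5$ is odd and $k=\sum(|V(P_i)|-1)$, exactly one $P_i$ has four vertices and $k=5$. The forced edges then give exactly the spanning subgraph $G^{\triangle}$.

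The main obstacle is the parity/planarity analysis of the edges from internal rim vertices to $\{y_1,y_2\}$: finding, in each configuration, the explicit pair of disjoint $M$-alternating cycles, or disjoint even cycles with a matchable complement. The first step will be to reuse verbatim the cycle constructions from Claims \ref{new-cla1} and \ref{nnn4}, treating the triangle $y_1y_2y_3$ as playing the role of the rim $R_2$ with $|V(P_i')|=2$ throughout.
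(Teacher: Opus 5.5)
Your proof is correct, and for this lemma it takes a different route from the paper's.

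\textbf{The paper's route.} It first deletes cross edges not in $M$ to get a spanning subgraph $G'$ in which every rim vertex has exactly one neighbour in the triangle. By planarity, the neighbourhoods $S_1,S_2,S_3$ of the triangle vertices are then three consecutive arcs partitioning the rim. It rules out every $|S_i|\ge 3$ by a case analysis on how many $|S_i|$ are large and on their parities. Each case exhibits two disjoint even cycles with matchable complement in $G'$, namely $4$-cycles in the fans $\{v_i\}\cup S_i$ or the even Hamilton cycle of such a fan. Finally, $|S_i|\in\{1,2\}$ with odd sum $k\ge 5$ gives $k=5$ and $G^{\triangle}$.

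\textbf{Your route.} You keep the whole graph and the matching $M$, and transplant the mechanism of Claim \ref{nnn4}. A wrong-parity edge $a_iy_2$ ($i$ odd) or $a_iy_1$ ($i$ even) closes an $M$-alternating cycle disjoint from $F_3$ or $F_2$ respectively, contradicting Prop. \ref{PM-P2}. Non-crossing inside $F_1$ then immediately gives $|E(P_i)|\le 3$.

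\textbf{Comparison.} Your argument is more uniform than the paper's case split, parallels Lemma \ref{W5W5h}, and reads $G^{\triangle}$ directly off $M$ and the forced edges. The paper's version avoids alternating-cycle bookkeeping but needs more cases. Your ``by planarity'' step should cite Corollary \ref{Bi3}. It is what fixes the faces $R$, $T$, $F_1,F_2,F_3$ of the prism subdivision formed by $R$, the triangle and the three $M$-edges.

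\textbf{The step to make precise: excluding two long paths.} Here the cycle you suggest, $y_1a_1a_2y_2$, does not work as written. Write $P_1=u_1a_1a_2u_2$ and $P_2=u_2b_1b_2u_3$.
\begin{itemize}
\item By planarity, $u_2$ has no neighbours besides $a_2,b_1,y_2$.
\item So after deleting $y_1a_1a_2y_2$, $u_2$ must be matched to $b_1$.
\item If $|E(P_3)|=1$, the leftover set $\{u_1,b_2,u_3,y_3\}$ contains an even cycle only when $u_1y_3\in E(G)$.
\end{itemize}

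A choice that works uniformly: let $C_1=a_2u_2b_1y_2a_2$, and let $C_2$ consist of the edge $b_2u_3$, the path $P_3$ from $u_3$ to $u_1$, and the path $u_1a_1y_1y_3b_2$. All needed edges are rim edges, triangle edges, or your forced edges $a_1y_1$, $a_2y_2$, $b_1y_2$, $b_2y_3$. The cycles are vertex-disjoint, $C_2$ has length $5+|E(P_3)|$, which is even, and together they cover $V(G)$. This contradicts PM-compactness whatever the value of $|V(P_3)|$.
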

\begin{proof}
   Let $R=u_1u_2\dots u_ku_1$ be the rim of $W_k$, and let $T$ be a triangle in $G-V(R)$, with $V(T)=\{v_1,v_2,v_3\}$.
   Since $G$ is a brick, every nontrivial cut is not a tight cut.
   Since $E[T,R]$ is a nontrivial cut, it is not a tight cut, i.e.
   $G$ has a perfect matching $M$ such that $|M\cap E[T,R]|\geq 3$. Since $|V(T)|=3$, $|M\cap E[T,R]|= 3$.
   Let $H$ be the underlying graph of $G$, chosen so that $M\cup E(T)\subseteq E(H)$. Each vertex of $R$ has a neighbor in $T$, so we can obtain $G'$ from $H$ by deleting edges of $E_H[T,R]\setminus M$ until $d_{G'}(u_i)=3$ for $1\leq i\leq k$. Since $G'$ is a spanning subgraph of $G$, it is PM-compact by Prop. \ref{PM-P1}.
   Since $G'[M\cup E(T)\cup E(R)]$ is a subdivision of an odd prism, Corollary \ref{Bi3} implies that this subgraph has a unique plane embedding. For each $v_i$ ($i=1,2,3$), the vertices of $N_{G'}(v_i)\cap V(R)$ induce a path of $R$. Let $S_i=N_{G'}(v_i)\cap V(R)$ for $i=1,2,3$. Since $M\subseteq E(G')$, $|S_i|\geq 1$. Since $d_{G'}(u_i)=3$, each pair of $S_1$, $S_2$ and $S_3$ is disjoint and $\displaystyle\sum_{i=1}^3|S_i|=|V(R)|=k$ is odd.

   If at least two of $|S_1|$, $|S_2|$ and $|S_3|$ are at least three, say $|S_1|$ and  $|S_2|$, then for $i=1,2$,
   there is a cycle of length four $C_i$ in $G'[\{v_i\}\cup S_i]$, which contains a vertex adjacent to some vertex in $S_3$.
   If $|S_3|$ is odd, then $G'[\{v_3\}\cup S_3]$ has a perfect matching, say $M_3$. Since $|V(C_i)\cap V(R)|=3$, $G'-V(C_1)-V(C_2)-V(M_3)$ is either empty or an odd subpath of $R$.
   So $G'-V(C_1)-V(C_2)$ has a perfect matching, a contradiction to the PM-compactness of $G'$. If $|S_3|$ is even, then either $|S_1|$ or $|S_2|$ is even, say the former. It implies that $|S_1|\geq 4$. Let $s\in S_1\cap N_{G'}(S_3)$. Then
   $G'[\{v_3\}\cup S_3\cup \{s\}]$ has a perfect matching, say $M_3'$.
   In this case, let $C_1'$ be the cycle of length four in $G'[\{v_1\}\cup (S_1\setminus \{s\})]$, which contains a vertex adjacent to $s$. Then $G'-V(C_1')-V(C_2)-V(M_3')$ is either empty or an odd subpath of $R$. So $G'-V(C_1')-V(C_2)$ has a perfect matching, a contradiction.

   If exactly one of $|S_1|$, $|S_2|$ and $|S_3|$ is at least three, say $|S_1|$, then for $i=2,3$, $|S_i|\leq 2$. If $|S_1|$ is odd, then the Hamilton cycle $C_h$ of $G'[\{v_1\}\cup S_1]$ is an even cycle. Since $G'-V(C_h)$ has a Hamilton cycle, a contradiction. If $|S_1|$ is even, then either $|S_2|$ or $|S_3|$ is odd, say the former. It implies that $|S_2|=1$. Let $C_1$ be the cycle of length four in $G'[\{v_1\}\cup S_1]$, which has a vertex adjacent to the unique vertex in $S_2$, and $C_2$ be the cycle of length four containing both $v_2$ and $v_3$. Then $G'-V(C_1)-V(C_2)$ is empty or an odd subpath of $R$, thus it has a perfect matching, a contradiction.

    By the preceding discussion, $1\leq |S_i|\leq 2$ for $i=1,2,3$. Since $\displaystyle\sum_{i=1}^3|S_i|$ is odd, one of $|S_1|$, $|S_2|$ and $|S_3|$ equals 1 and the other two equal 2. Thus $G^{\triangle}$ is a spanning subgraph of $G'$, and hence of $G$.
\end{proof}
\section{Proof of Theorem \ref{MaT}}

\noindent\textbf{Theorem \ref{MaT}} \textit{Let $G$ be a planar PM-compact brick. Then there exists a sequence of planar PM-compact bricks $G_0, G_1, \dots, G_n$ such that $G_0=G$, the underlying graph of $G_n$ is an odd wheel, and each $G_i$ {\rm (}$0\le i<n${\rm )} has a triangle $T_i$ with $G_i/T_i=G_{i+1}$.}
\begin{proof}
   Apply induction on $|V(G)|$.
   If $G$ is solid, then its underlying graph is an odd wheel by Lemma \ref{OW}, and we are done.
   If $G$ is nonsolid, then $|V(G)|\geq 6$.
   By Theorem \ref{MKCo}, $G$ has a robust cut $\partial(X)$ such that the underlying graph of $G/\overline{X}$ is an odd wheel,
   and $G/X$ is a planar PM-compact brick.
   If $|V(G)|=6$, then the underlying graph of $G[X]$ is isomorphic to a triangle. Hence the result follows.
   Next assume that $|V(G)|\geq 8$. If $|V(G/X)|=4$ or $|V(G/\overline{X})|=4$, then we are done.
   If the underlying graphs of both $G/X$ and $G/\overline{X}$ are odd wheels with at least six vertices, then $G$ contains a spanning subgraph isomorphic to $G^*$ by Lemma \ref{W5W5}. Lemma \ref{GstarChain} gives the required chain.
   It remains to consider the case where the underlying graph of $G/\overline{X}\rightarrow \overline{x}$ is an odd wheel with at least six vertices and the underlying graph of $G/X\rightarrow x$ is not an odd wheel. Since $G/X$ is a planar PM-compact brick, $G/X$ is nonsolid by Lemma \ref{OW}.
   By the induction hypothesis, $G/X$ contains a triangle $T$ such that $(G/X)/T\rightarrow t$ is a brick. Let $V(T)=\{t_1,t_2,t_3\}$ and $G'=(G/X)/T$. Then $|V(G')|\geq 4$. Let $Y=V(G)\setminus (X\cup V(T))$. Then $|Y|\geq 2$. We have the following claims, the proofs of which will be provided later.
   \setcounter{Cla}{0}
   \begin{Cla}
      \label{cla8}
      If $x\notin \{t_1,t_2,t_3\}$ in the graph $G/X$, then $G/T$ is a brick.
   \end{Cla}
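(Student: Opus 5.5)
Since $x\notin\{t_1,t_2,t_3\}$, the triangle $T$ of $G/X$ is a triangle of $G$ lying entirely in $\overline{X}$. The cut $\partial_G(X)$ is then also a cut of $G/T$, with shore $X$ untouched. Its two contractions are $(G/T)/\overline{X}=G/\overline{X}$, whose underlying graph is an odd wheel and which is a brick, and $(G/T)/X=(G/X)/T=G'$, which is a brick by the choice of $T$ (the induction hypothesis applied to $G/X$). Hence $G/T$ is a splicing of the two bricks $G/\overline{X}$ and $G'$ at $\overline{x}$ and $x$, and I would apply Theorem \ref{2.1}. It suffices to show that no pair of vertices, one $p\in X$ and one $q\in \overline{X}\setminus V(T)\cup\{t\}$ (that is, a vertex of $G'-x$), covers $\partial_{G/T}(X)$.

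The key observation is that $\partial_{G/T}(X)$ corresponds edge-for-edge to $\partial_G(X)$; only the ends in $V(T)$ are renamed $t$. Suppose a pair $\{p,q\}$ covers $\partial_{G/T}(X)$. If $q\neq t$, then $\{p,q\}$, viewed in $G$, covers $\partial_G(X)$. This contradicts Theorem \ref{2.1} applied to $G$ as the splicing of the bricks $G/\overline{X}$ and $G/X$. (Alternatively, it contradicts 3-connectivity, since $|X|\geq 2$ and $|\overline{X}|\geq 2$ make $\{p,q\}$ a 2-separator, unless one shore minus the cover vertex is empty.) So the only problematic case is $q=t$: every edge of $\partial_G(X)$ is incident either with $p$ or with a vertex of $T$.

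This case is the main obstacle. I would handle it via the bricks $G'$ and $G/X$. In $G/X$, every edge at $x$ other than those coming from $p$ goes to $T$. In $G'$, the contraction vertex $x$ therefore has neighbours only in $\{t\}\cup N$, where $N$ is the set of neighbours reached through $p$. Symmetrically, I would view things from $G/\overline{X}$: the odd wheel side, where $p$ lies on the wheel with hub or rim structure, and all edges of $\partial(X)$ not at $p$ land in $T$.

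The plan is to find two vertices of $G$ forming a 2-vertex cut, or a tight cut, to contradict $G$ being a brick. The cleanest attempt is to consider the set $Z=X\cup V(T)$. We have $\partial_G(Z)\subseteq \partial(T)\setminus\partial(X)\,\cup\,\partial(p)\cap\partial(X)$. Then use the fact that $(G/X)/T$ is a brick: since $x$ and $t$ are adjacent, bricks being 3-connected means $x$ has another neighbour, which must be in $N$. I would then check, using that $x$ has degree at least $3$ in $G'$ and that $T$ has only three vertices, that $\{p\}\cup$ (a vertex of $T$) or the vertex $t$ together with $p$ separates $X\setminus\{p\}$ from $Y$ in $G$. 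Here $Y$ is nonempty with $|Y|\ge 2$, and $X\setminus\{p\}\neq\emptyset$ because $G/\overline{X}$ is a wheel with at least six vertices. This yields a cut of $G$ of size at most $2$ in vertices, or else forces every edge from $X\setminus\{p\}$ to go into $T$. In the latter case I would argue parity: $\partial_G(X\setminus\{p\}\cup V(T))$ would be a nontrivial tight cut, since every perfect matching must match the odd set through a single edge at $p$ or at $Y$. This contradicts $G$ being a brick. Having excluded $q=t$, Theorem \ref{2.1} gives that $G/T$ is a brick.
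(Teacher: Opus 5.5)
Your first step is correct and is the same reduction the paper makes: a covering pair $\{p,q\}$ of $\partial_{G/T}(X)$ with $q\neq t$ would also cover $\partial_G(X)$, which is impossible. Hence $q=t$ and $E_G[X,Y]=E_G[\{p\},Y]$, which is Eq.~(\ref{eq3}). The gap is the remaining case. You try to close it using only that $G$ is a brick, by producing a 2-vertex cut or a nontrivial tight cut of $G$. That cannot succeed, because this case really occurs in planar bricks that meet all the other hypotheses.

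For example, let $G[X]$ be the $5$-cycle $x_1x_2x_3x_4x_5x_1$, let $t_1t_2t_3$ be a triangle, let $Y=\{y_1,y_2\}$, and add the edges $x_2t_1$, $x_3t_2$, $x_4t_2$, $x_5t_3$, $x_1y_1$, $x_1y_2$, $y_1y_2$, $y_1t_1$, $y_2t_3$. This $G$ is planar, and $G/\overline{X}$ is $W_5$ with a doubled spoke at $x_1$. The graph $G/X$ is 3-connected and bicritical (check directly), hence a brick, and $(G/X)/T$ has underlying graph $K_4$. Theorem~\ref{2.1} then shows that $G$ is a brick. Yet $\{x_1,t\}$ separates $\{x_2,\dots,x_5\}$ from $Y$ in $G/T$.

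Both of your concrete steps fail in this example. The edges from $X\setminus\{x_1\}$ reach all three vertices of $T$, so no pair $\{x_1,t_i\}$ separates anything; note also that $t$ is not a vertex of $G$. The set $Z=(X\setminus\{x_1\})\cup V(T)$ is not tight: $\{x_1x_2,t_1y_1,t_3y_2,x_3t_2,x_4x_5\}$ is a perfect matching meeting $\partial(Z)$ in three edges. Your parity argument overlooks that a perfect matching may use one edge between $X\setminus\{p\}$ and $p$, one edge from $X\setminus\{p\}$ into $T$, and two edges from $T$ into $Y$.

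What excludes this configuration is PM-compactness, which your argument never invokes. In the example, $t_2x_3x_2x_1x_5x_4t_2$ and $t_1t_3y_2y_1t_1$ are vertex-disjoint even cycles covering $V(G)$.

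The paper handles the case $q=t$ using PM-compactness together with planarity:
\begin{enumerate}
\item It fixes a perfect matching $M_G^X$ with exactly three edges in $\partial_G(X)$.
\item It uses the odd Hamilton cycle of $G[X]$ and the uniquely embedded even prism subdivision $G_p^j$, whose facial cycles are $M_G^X$-alternating.
\item It shows $|N_G(V(T)\setminus V(P_t))\cap X|\le 1$ (Claim~\ref{subcla-1}).
\item It shows that $\overline{x}$ is the hub, and that exactly one edge of $M_G^X$ joins $X$ to $Y$.
\item It finally finds a vertex of $T$ off $P_t$ with two neighbours in $X$, contradicting Claim~\ref{subcla-1}.
\end{enumerate}
You need some argument of this kind, one that produces two disjoint even cycles with a matchable complement, in place of your connectivity/tight-cut step.
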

      \begin{Cla}
      \label{cla7}
      If $x\in \{t_1,t_2,t_3\}$ in the graph $G/X$, then either $G[X\cup V(T)]$ contains a triangle $T_x$ such that $G/T_x$ is a brick, or $G[Y]$ is a triangle such that $G/Y$ is a brick.
   \end{Cla}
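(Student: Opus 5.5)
The plan is to reduce Claim \ref{cla7} to the splicing results of Section \ref{Sec3}, applied to the cut $\partial_G(Y)$. Write $x=t_1$ in $G/X$. Put $Z=X\cup\{t_2,t_3\}$, so that $\overline{Z}=Y$.

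\textbf{Step 1: the two sides of $\partial_G(Y)$.} Contracting $Z$ gives $G/Z=(G/X)/T=G'$, which is a brick. Contracting $Y$ gives
\[
G/Y\in (G/\overline{X})(\overline{x})\odot K(x),\qquad K=(G/X)/Y .
\]
Here $K$ is the triangle $T$ together with the contraction vertex $y$, and its underlying graph is $K_4$. The graph $K$ is matching covered because it is the other contraction of the cut $\partial_{G/X}(V(T))$, and that cut is separating since $G'$ is a brick.
\begin{itemize}
\item By Lemma \ref{SM}, $G/Y$ is matching covered. Hence $\partial_G(Y)$ is separating.
\item By Lemma \ref{SC}, $G/Y$ is PM-compact, and it is planar.
\item By Proposition \ref{2.4}, $G/Y$ is bicritical.
\end{itemize}
So $G/Y$ is a brick exactly when it is $3$-connected, and Theorem \ref{2.1} gives the test for this.

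\textbf{Step 2: case $G/Y$ is a brick and $\overline{x}$ is the hub of $G/\overline{X}$.} Lemma \ref{WkPath} applies to $G/Y$. It gives $k=5$ and a spanning subgraph isomorphic to $G^{\triangle}$. From the explicit shape of $G^{\triangle}$ (one $|S_i|=1$, two $|S_j|=2$), I would read off a triangle $T_x$ that lies in $G[X\cup V(T)]$ and is disjoint from the contraction vertex $y$. The triangle I have in mind is $v_j$ together with its two rim neighbours in a class with $|S_j|=2$.

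I would then check that $(G/Y)/T_x$ is a brick. Its underlying graph should be $K_4$ or a $K_4$-like graph, and Theorem \ref{2.1} should apply using the three disjoint spokes visible in $G^{\triangle}$.

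Finally I would regard $G/T_x$ as a splicing of $(G/Y)/T_x$ and $G'$ at the vertex coming from $Y$. Here $G'$ is a brick, and the cut $\partial(Y)$ carries a matching of size three in a perfect matching. Hence no two vertices cover it, and Theorem \ref{2.1} makes $G/T_x$ a brick. This gives the first alternative.

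\textbf{Step 3: case $\overline{x}$ is not the hub.} Then $\overline{x}$ has only three distinct neighbours in $G/\overline{X}$: the hub and two rim vertices. I would argue as in the first part of the proof of Lemma \ref{W5W5}. Lemma \ref{GF} applies to the splicing $G/X\in(G/\overline{X})(\overline{x})\odot(G/X)(x)$. Together with Lemma \ref{MF}, it should force a triangle formed by $\overline{x}$'s side. That triangle gives directly a triangle $T_x\subseteq X\cup V(T)$ whose contraction is a brick.

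\textbf{Step 4: case $G/Y$ is not a brick.} By Theorem \ref{2.1}, some pair $\{a,b\}$ covers $\partial(Y)$, with one of $a,b$ in $Z$ and the other in $Y$. Since $G$ is $3$-connected and $\partial(Y)$ is not tight, I would use the covering pair together with $|Y|\ge 2$ and the fact that $G'$ is a brick. The aim is to show that $|Y|=3$ and that $G[Y]$ is a triangle, so that $G/Y$ is a triangle contraction of $G$. Bricklikeness of $G/Y$ in this sub-case would then be rechecked by Theorem \ref{2.1} applied to the other side.

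The main obstacle I expect is exactly this last step, and also the verification in Step 2 that the chosen $T_x$ avoids $y$ and yields a brick. Both depend on the concrete picture of $G^{\triangle}$. If the covering-pair analysis does not close, I would try first to show that the covering pair forces $y$ to have degree $3$ in $G/Z$. That would make $Y$ the vertex set of a triangle-like $3$-cut, and tightness arguments would finish.
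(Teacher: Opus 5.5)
\textbf{There is a genuine gap, and the main one is Step 3.} When $\overline{x}$ is not the hub, the first alternative is in general false, so no combination of Lemmas \ref{GF} and \ref{MF} can produce $T_x$.

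Here is a counterexample. Take $G/\overline{X}\cong W_5$ with hub $h$ and rim $\overline{x}a_1a_2a_3a_4$. Take $G/X\cong\overline{C_6}$ with triangles $xt_2t_3$ and $y_1y_2y_3$ and edges $xy_1,t_2y_2,t_3y_3$. Splice so that $\partial_G(X)=\{a_1t_2,a_4t_3,hy_1\}$.
\begin{itemize}
\item This cut is a matching, so $G$ is a brick by Theorem \ref{2.1}. $G$ is planar.
\item $G$ is PM-compact: its only $4$-cycles are $ha_1a_2a_3$, $ha_2a_3a_4$ and $t_2t_3y_3y_2$, and no two disjoint even cycles leave a matchable remainder.
\item $T=xt_2t_3$ is a legitimate output of the induction hypothesis. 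Here $G/Y$ is a brick and $\overline{x}$ is a rim vertex, so you are in your Step 3.
\end{itemize}
Yet the only triangles of $G[X\cup V(T)]$ are $ha_1a_2$, $ha_2a_3$ and $ha_3a_4$. Contracting any of them leaves $a_3$, $a_1$ or $a_2$, respectively, with only two distinct neighbours, so $G/T_x$ is not $3$-connected.

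This is the general picture: every rim vertex of $G[X]$ has exactly three distinct neighbours, one of them the hub. So in this case you must prove the second alternative, and that is where the paper's real work lies. It uses alternating paths in $G[Y]$, the face of $G[Y]$ containing $G-Y$, and PM-compactness. With these it shows that a perfect matching of $G/Y$ meeting $\partial(X)$ in three edges uses the hub's edge to $y$. Then Lemmas \ref{MF} and \ref{GF} make every edge at $\overline{y}$ in $G'$ forcing. Hence $|\mathcal{M}(G')|=3$, $G'\cong K_4$ by Lemma \ref{MG3}, and $G[Y]$ is a triangle.

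\textbf{Step 4 aims at the wrong alternative.} If $G/Y$ is not a brick, you cannot finish with ``$G[Y]$ is a triangle and $G/Y$ is a brick''.

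You are also testing the wrong cut. For $G/Y\in(G/\overline{X})(\overline{x})\odot K(x)$, Theorem \ref{2.1} concerns $\partial_{G/Y}(X)$ being covered by a vertex of $X$ and a vertex of $\{t_2,t_3,y\}$. Since $G$ is $3$-connected and $G[Y]$ is connected, the second vertex must be $y$. This happens exactly when $N_G(\{t_2,t_3\})\cap X=\{x'\}$ for a single vertex $x'$.

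The correct move is then to contract the triangle $T'=x't_2t_3$.
\begin{itemize}
\item $(G/T')/Y$ has the same underlying wheel as $G/\overline{X}$.
\item Take a perfect matching of $G$ meeting $\partial_G(X)$ in three edges. If it contains an edge $x't_i$, replace it by the matching edge from the other $t_j$ into $Y$. This gives a $3$-matching across $\partial_{G/T'}(Y)$, so Theorem \ref{2.1} applies.
\end{itemize}

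\textbf{The last step of Step 2 is not justified as written.} Three edges of a perfect matching of $G$ in $\partial_G(Y)$ need not remain a matching in $G/T_x$, because two of them may end in $V(T_x)$. The paper instead splits according to which vertex of the triangle $\{t_2,t_3,y\}$ has degree $3$ in $G^{\triangle}$. When that vertex is $y$, it again proves that $G[Y]$ is a triangle rather than contracting $T_x$.
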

   By Claims \ref{cla8} and \ref{cla7}, there is a triangle $S$ such that $G/S$ is a brick. Since $G$ is a brick, the underlying graph of the other contraction of $\partial_G(V(S))$ is $K_4$. Thus $\partial_G(V(S))$ is a separating cut, and Lemma \ref{SC} makes $G/S$ PM-compact. Triangle contraction preserves planarity and reduces the number of vertices, so induction completes the chain.
\end{proof}

   We now prove Claims \ref{cla8} and \ref{cla7}.
   Since $G/X$ is a brick, every tight cut of $G/X$ is trivial.
   Since $\partial_{G/X}(V(T))$ is not a trivial edge cut of $G/X$,
   it is not a tight cut of $G/X$. So $G/X$ contains a perfect matching $M_{G/X}^T$ such that $|M_{G/X}^T\cap \partial_{G/X}(V(T))|\geq 3$. As $|V(T)|=3$,
   \begin{equation}
   \label{eq1}
       |M_{G/X}^T\cap \partial_{G/X}(V(T))|=3.
   \end{equation}

   Similarly, by the fact that $G$ is a brick, $G$ contains a perfect matching $M_G^X$ such that

   \begin{equation}
   \label{eq2}
       |M_G^X\cap \partial_G(X)|\geq 3.
   \end{equation}

\subsection{Proof of Claim \ref{cla8}}
Suppose to the contrary that $G/T$ is not a brick.
      As $X\cap V(T)=\emptyset$, $V(T)\subseteq \overline{X}$. Note that
      $G/T$ is a splicing of $(G/T)/X$ and $G/\overline{X}$. Since $(G/T)/X=(G/X)/T=G'$, $(G/T)/X$ is a brick. Since $G/\overline{X}$ is a brick whose underlying graph is an odd wheel,
      $G/T$ is a splicing of two bricks $G'$ and $G/\overline{X}$. From the assumption that $G/T$ is not a brick, we get that $E_{G/T}[X, \{t\}\cup Y]$ is covered by a vertex $s$ in $X$ and a vertex $a$ in $\{t\}\cup Y$ by Theorem \ref{2.1}. Since $|X|\geq 5$ and $|\{t\}\cup Y|\geq 3$, $\{s,a\}$ is a 2-vertex cut of $G/T$. Since $G$ is 3-connected but $G/T$ is not, $t\in \{s,a\}$. Since $s\in X$ and $a \in \{t\}\cup Y$, $a=t$. So
      \begin{equation}
          \label{eq3}
          E_G[X,Y]=E_G[\{s\},Y].
      \end{equation}

      It implies that $|M_G^X\cap E_G[X,Y]|\leq 1$.
   Since $|V(T)|=3$, $|M_G^X\cap E_G[X,V(T)]|\leq 3$.
   Thus, $|M_G^X\cap E_G[X,V(T)\cup Y]|\leq 4$.
   Since $|X|$ is odd, $|M_G^X\cap \partial_G(X)|$ is odd.
   Since $|M_G^X\cap E_G[X,V(T)\cup Y]|=
   |M_G^X\cap \partial_G(X)|\geq 3$,
   \begin{equation}
       \label{eq4}
       |M_G^X\cap \partial_G(X)|=3,
   \end{equation}
   and $|M_G^X\cap E_G[X,V(T)]|\geq 2$.
   Let $M_G^X\cap \partial_G(X)=\{e_1,e_2,e_3\}$ and $V(e_i)\cap X=\{x_i\}$ for $i=1,2,3$.
   Relabel the vertices of $T$ so that $V(e_i)\cap \overline{X}\subseteq \{t_i\}\cup Y$ for $i=1,2,3$.

   Note that $G[X]$ contains an odd Hamilton cycle $C$.
   Let $P_i=x_iCx_{i+1}$ for $1\leq i\leq 3$, where the subscripts are taken modulo 3. Then each $P_i$ is an $M_G^X$-alternating odd path, and the edges of $P_i$ incident with its end vertices are not in $M_G^X$.
   As $x\notin V(T)$, $x$ and $t$ are two different vertices of $G'$.
   Since $G[Y]=G'-x-t$ and $G'$ is 3-connected and bicritical, $G[Y]$ is connected and contains a perfect matching, say $M_Y$.

   \begin{figure}[h]
         \centering
         \includegraphics[scale=0.3]{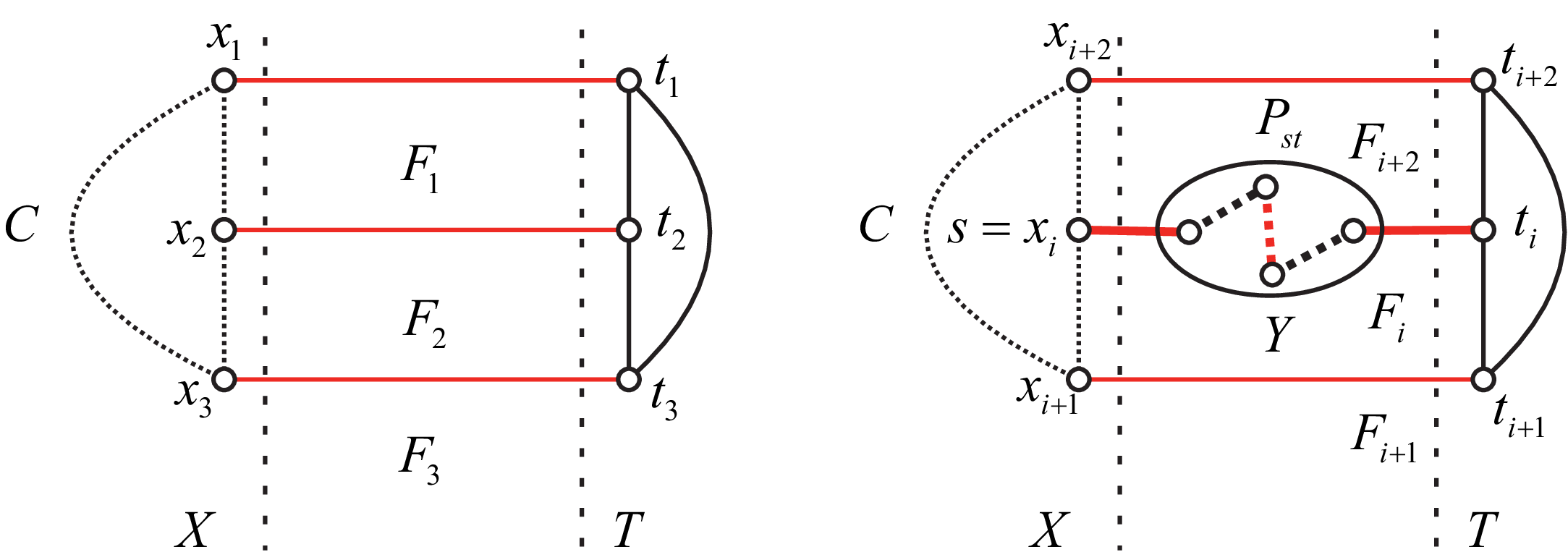}
         \caption{\label{fn9-1}Left: the graph $G_p^1$; Right: the graph $G_p^2$.}
      \end{figure}

      If $M_G^X\cap E_G[X,Y]=\emptyset$, then $e_i=x_it_i$ for $i=1,2,3$ and thus let $$G_p^1=G[E(C)\cup E(T)\cup \{e_1,e_2,e_3\}]$$ (with labels as shown in Fig. \ref{fn9-1}).
      Otherwise, $|M_G^X\cap E_G[X,Y]|=|M_G^X\cap E_G[\{s\},Y]|=1$ (by Eq. (\ref{eq3})). Let $M_G^X\cap E_G[\{s\},Y]=\{sy_s\}$ with $y_s\in Y$. Then $sy_s\in \{e_1,e_2,e_3\}$.
      Moreover, since $|M_G^X\cap E_G[V(T),Y]|=1$, $M_G^X\oplus M_Y$ contains an odd path $P_{st}$, whose end vertices are $s$ and some vertex $t_j$ ($j\in \{1,2,3\}$). In this case, let $$G_p^2=G[E(C)\cup E(T)\cup \{e_1,e_2,e_3\}\cup E(P_{st})]$$ (with labels as shown in Fig. \ref{fn9-1}). In both cases, $M_G^X\cap E(G_p^j)$ ($j=1,2$) is a perfect matching of $G_p^j$. So $G_p^j$ is an even subdivision of a prism.
      Fix a plane embedding of $G$. Since $G_p^j$ is a subgraph of $G$, it inherits this embedding, which is unique by Corollary \ref{Bi3}. Hence we can speak of the faces of $G_p^j$.
      For $i=1,2,3$, let $F_i$ be the facial cycle of $G_p^j$ containing all of $\{x_i,t_i,x_{i+1},t_{i+1}\}$, where the subscripts are taken modulo 3. Then each $F_i$ ($i=1,2,3$) is an $M_G^X$-alternating cycle.

      Since $G/X/T$ is 3-connected, $|N_{G/X/T}(x)\cap Y|\geq 2$ and $|N_{G/X/T}(t)\cap Y|\geq 2$.
      Since $N_G(s)\cap Y=N_{G/X/T}(x)\cap Y$ (deduced from Eq. (\ref{eq3})),
      \begin{equation}
      \label{neq5}
      |N_G(s)\cap Y|\geq 2.
      \end{equation}
            Since $|M_{G/X}^T\cap E_{G/X}[V(T),\{x\}\cup Y]|=3$ (by Eq. (\ref{eq1})),
            \begin{equation}
               \label{neq6}
            |M_{G/X}^T\cap E_G[V(T), Y]|\geq 2.
            \end{equation}
\begin{Subcla}
   \label{subcla-1}
  $M_Y\oplus M_{G/X}^T$ contains a unique path $P_t$ whose two end vertices belong to $V(T)$.
Moreover, $|N_G(V(T)\setminus V(P_t))\cap X|\leq 1$.
\end{Subcla}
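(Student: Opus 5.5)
The plan is to get both statements by comparing the two matchings on $G[Y\cup V(T)]$. For the second statement, a vertex of $T$ with two neighbours in $X$ would produce a forbidden pair of disjoint even cycles.

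\emph{Structure of the symmetric difference.} Let $N$ be the set of edges of $M_{G/X}^T$ lying in $G[Y\cup V(T)]$, viewed as edges of $G$. By Eq. (\ref{eq1}), $M_{G/X}^T$ has exactly three edges leaving $V(T)$ and exactly one edge at $x$, which gives two cases.
\begin{itemize}
\item[(A)] The edge at $x$ goes to some $t_j\in V(T)$. Then $N$ covers $Y\cup (V(T)\setminus\{t_j\})$.
\item[(B)] The edge at $x$ goes to some $y\in Y$. Then $N$ covers $V(T)\cup (Y\setminus\{y\})$, and all three vertices of $T$ are matched into $Y$.
\end{itemize}
$M_Y$ covers exactly $Y$, so in $M_Y\oplus N$ every vertex has degree at most $2$. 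The degree-one vertices are exactly the vertices covered by precisely one of the two matchings.
\begin{itemize}
\item In case (A) these are the two vertices of $V(T)\setminus\{t_j\}$. Hence $M_Y\oplus N$ has exactly one path component $P_t$, and both its ends lie in $V(T)$; here $V(T)\setminus V(P_t)=\{t_j\}$.
\item In case (B) the degree-one vertices are $t_1,t_2,t_3$ and $y$. So there are exactly two path components. Since $y$ is the only end outside $V(T)$, exactly one of them, $P_t$, has both ends in $V(T)$. The other, $Q$, joins the remaining vertex $t_k$ to $y$.
\end{itemize}
Since $M_{G/X}^T\oplus M_Y$ agrees with $N\oplus M_Y$ on the relevant edges, this proves the first statement.

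\emph{Bounding $N_G(t_k)\cap X$.} Let $t_k$ be the unique vertex of $V(T)\setminus V(P_t)$, and suppose it has two distinct neighbours $x',x''\in X$. We build two vertex-disjoint even cycles whose deletion leaves a matchable graph; this contradicts PM-compactness of $G$.

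The first cycle is $C_1=P_t+t_at_b$, where $t_a,t_b$ are the ends of $P_t$. $P_t$ alternates between $N$ and $M_Y$ and starts and ends with $N$-edges, so it has odd length, and therefore $C_1$ is even.

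For the second cycle, use the odd Hamilton cycle $C$ of $G[X]$. The two arcs of $C$ between $x'$ and $x''$ have vertex counts summing to $|X|+2$, which is odd. So one arc $A$ has an odd number of vertices. Then $C_2=A+x''t_k+t_kx'$ is an even cycle contained in $X\cup\{t_k\}$, and $C_2$ is disjoint from $C_1\subseteq Y\cup V(T)$.

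It remains to show that $G-V(C_1)-V(C_2)$ is matchable.
\begin{itemize}
\item $X\setminus V(A)$ induces a subpath of $C$ with $|X|-|A|$ vertices, which is even, so it has a perfect matching.
\item $Y\setminus V(P_t)$ is matched by $M_Y$. This holds because $M_Y$-edges are never split by the components of $M_Y\oplus N$. In case (B), every $Y$-vertex of $Q$ is covered by an $M_Y$-edge of $Q$, since $Q$ starts at $t_k$ with an $N$-edge and ends at $y$ with an $M_Y$-edge.
\end{itemize}
Altogether this gives a perfect matching of $G-V(C_1)-V(C_2)$, a contradiction. Hence $|N_G(V(T)\setminus V(P_t))\cap X|\leq 1$.

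The main obstacle is the bookkeeping in case (B). One has to check that the path $Q$ from $t_k$ to $y$ does not spoil the matching of $Y\setminus V(P_t)$ by $M_Y$, and that $C_2$ avoids $P_t$; the parity counts above are what settle both points.
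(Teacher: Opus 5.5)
Your proof is correct and follows the paper's own argument. You identify $P_t$ from the degree-one vertices of the symmetric difference; the paper does this uniformly by noting that these are $x,t_1,t_2,t_3$, while your split according to where $x$ is matched (via $N$) is just extra bookkeeping for the same count. You then contradict PM-compactness with the even cycles $P_t+t_at_b$ and an odd-vertex arc of $C$ closed through $t_k$, whose removal leaves a graph matched by $M_Y\setminus E(P_t)$ together with a perfect matching of the rest of $C$.
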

\begin{figure}[h]
   \centering
   \includegraphics[scale=0.3]{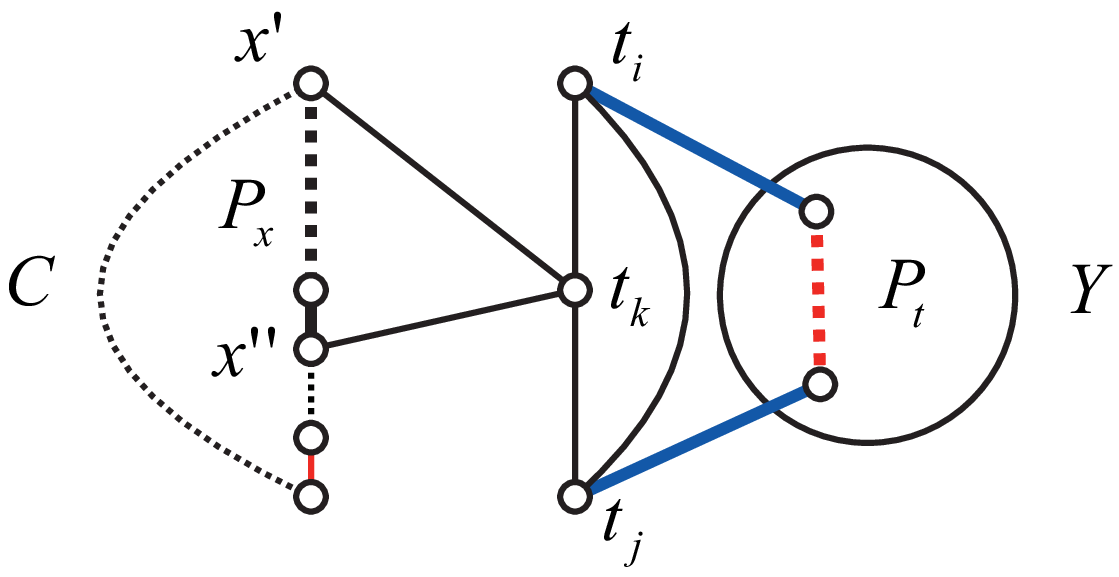}
   \caption{\label{fn9-0}Illustration for the proof of Claim \ref{subcla-1}.}
\end{figure}
\begin{proof}
 The symmetric difference $M_Y\oplus M_{G/X}^T$ has exactly four vertices of degree one, namely $x,t_1,t_2,t_3$, and thus has two path components (and possibly alternating cycles). Exactly one path has both ends in $V(T)$; call it $P_t$. Its first and last edges belong to $M_{G/X}^T$, so it has odd length (see Fig. \ref{fn9-0}).
Let $t_i$ and $t_j$ be two end vertices of $P_t$ and $\{t_k\}=V(T)\setminus V(P_t)$, where $\{i,j,k\}=\{1,2,3\}$. Suppose that $N_G(t_k)\cap X$ contains at least two distinct vertices, say $x'$ and $x''$.
Since $C$ is an odd cycle, it contains an even path $P_x$ with end vertices $x'$ and $x''$.
Thus $C-V(P_x)$ has a perfect matching, say $M_C$ (which may be empty).
Then $P_x+x't_kx''$ and $P_t+t_it_j$ are two vertex-disjoint even cycles.
Removing these two cycles yields a subgraph with a perfect matching $M_C\cup (M_Y\setminus E(P_t))$, contradicting the fact that $G$ is PM-compact.
\end{proof}

  \begin{Subcla}
   \label{NC-2}
   $\overline{x}$ is the hub of the underlying wheel of $G/\overline{X}$.
   \end{Subcla}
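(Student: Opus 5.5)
We argue by contradiction: suppose $\overline{x}$ is not the hub of the underlying wheel of $G/\overline{X}$. Since the wheel has at least six vertices, the hub $h$ is unique and $h\in X$. The rim vertex $\overline{x}$ then has exactly three distinct neighbours in $G/\overline{X}$: $h$ and its two rim neighbours, say $y$ and $z$. In $G$, every edge of $\partial_G(X)$ therefore has its end in $X$ among $\{h,y,z\}$.

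Because the cut has only three ends on the $X$ side, $|M_G^X\cap\partial_G(X)|=3$ (Eq. (\ref{eq4})), and $\{x_1,x_2,x_3\}=\{h,y,z\}$. The cycle $C$ used in the construction of $G_p^j$ cannot be the rim, since it contains $h$. It is an odd Hamilton cycle of $G[X]$, so we take it to be the rim with the segment around $\overline{x}$ rerouted through $h$. Under our assumption, however, $X=\{h\}\cup(V(R)\setminus\{\overline{x}\})$, where $R$ is the rim, and $R-\overline{x}$ is a path from $y$ to $z$. So we may take $C=h\,y\,R\,z\,h$, with $y,z$ adjacent to $h$ via spokes. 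Then $P_1,P_2,P_3$ are the spoke $hy$, the rim path from $y$ to $z$ (of odd length $k-2$), and the spoke $zh$. Indeed, each $P_i$ is odd and its end edges lie outside $M_G^X$, which forces the $x_i$ to be exactly $h,y,z$ in this cyclic order.

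Next we apply Lemma \ref{GF} to the splicing $G\in (G/\overline{X})(\overline{x})\odot(G/X)(x)$. Since $y$ is a rim neighbour of $\overline{x}$ and $k\ge5$, the edge $\overline{x}y$ is a rim edge and hence not forcing in $G/\overline{X}$. The same holds for $\overline{x}z$. By Lemma \ref{GF}, for the matching edges $e_y,e_z$ at $y$ and $z$, the corresponding edges $x t'$ must be forcing in $G/X$; by Lemma \ref{MF} (applied in the other direction) the rim edges at $\overline{x}$ are simple. The real contradiction comes from PM-compactness, via two vertex-disjoint $M_G^X$-alternating cycles, contradicting Prop. \ref{PM-P2}.

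For the first cycle, take $k\ge5$, so the rim path $yRz$ has at least four vertices. The internal vertices of $yRz$ are matched along the path, and each is joined to $h$. Let $a$ be the vertex of $yRz$ adjacent to $y$. Then $a\,(aRz)\,z\,e_z$ is not quite right; instead take the even cycle formed by $h$, the segment of the rim from $a$ to the vertex before $z$, and the spokes back to $h$. Its alternation must be checked against $M_G^X$. The cleanest choice is the facial cycles $F_i$ of $G_p^j$: $F_2$, containing $y,z$ and their partners on the $\overline{X}$ side, is $M_G^X$-alternating. Inside $X$ we also find an $M_G^X$-alternating cycle $h\,a_1\dots a_{2p}\,\cdot\,h$ using two spokes of the path $yRz$, exactly as in Lemma \ref{W5W5}, disjoint from $F_2$ except possibly at $h$. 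I expect this step to be the main obstacle: choosing the two cycles so they are vertex-disjoint, since $F_2$ may pass through $Y$ via $P_{st}$, and we must use that $s\in\{h,y,z\}$ and Claim \ref{subcla-1}. I would split into the cases $s=h$ and $s\in\{y,z\}$, which correspond to $G_p^1$ and $G_p^2$. In each case I would exhibit the pair of disjoint alternating cycles, or alternatively a pair of disjoint even cycles whose complement is matchable (using $M_Y$ and $M_C$), and so reach a contradiction with PM-compactness.
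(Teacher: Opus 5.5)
There is a genuine gap. Your setup is correct. The Hamilton cycle $C$ of $G[X]$ is forced to be $h\,y\,R\,z\,h$, where $y,z$ in your notation are the two rim neighbours of $\overline{x}$. The $X$-ends of $e_1,e_2,e_3$ are exactly $h,y,z$, and your Lemma~\ref{GF} observation is valid, though it is never used. But the contradiction itself is never produced, and the route you sketch cannot produce it.

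Since $h$ is matched across $\partial_G(X)$ by $M_G^X$, an $M_G^X$-alternating cycle inside $G[X]$ would have to avoid $h$, and $G[X]-h$ is a path. So there is \emph{no} $M_G^X$-alternating cycle in $G[X]$. In particular, a cycle $h\,a_1\cdots a_{2p}\,h$ closed by two spokes has two consecutive non-matching edges at $h$. In Lemma~\ref{W5W5} the analogous cycle was closed by the matching edge $u_2h_2$ \emph{across} the cut, and nothing like that is available here. Two of the facial cycles $F_i$ cannot be used either: $F_i$ contains $x_i$ and $x_{i+1}$, so any two of them meet. More fundamentally, looking for alternating cycles of the fixed matching $M_G^X$ is the wrong frame. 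The contradiction comes from even cycles whose complement is matched using $M_Y$. The case split on $s$, which you announce but do not carry out, is where all the work lies.

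The missing steps, which are the paper's route, are these.

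\textbf{First, $s$ is not a rim neighbour of $\overline{x}$.} Suppose $s=z$. Then $G[X]-z$ has an even Hamilton cycle $C_z$, namely the hub plus the rest of the rim path. The graph $G-V(C_z)=G[V(T)\cup\{z\}\cup Y]$ is $2$-connected: $G[\{z\}\cup Y]\cong G'-t$, and by Ineq.~(\ref{neq6}) $T$ sends two independent edges into $Y$. It is also matchable: take a perfect matching of $G/X$ through an edge from $z$ to $Y$, which exists by Ineq.~(\ref{neq5}). By Theorem~\ref{BE} it therefore has two perfect matchings. Hence it contains an even cycle disjoint from $C_z$ whose removal leaves a matchable graph, a contradiction. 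So $s=h$, and by Eq.~(\ref{eq3}) the vertices $y$ and $z$ are matched by $M_G^X$ to two vertices $t_y,t_z$ of $T$.

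\textbf{Second, a planarity argument.} Working in $G_p^1$ or $G_p^2$, one shows that the third vertex of $T$ lies on the path $P_t$ of Claim~\ref{subcla-1}. So $P_t$ misses one of $t_y,t_z$, say $t_z$.

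\textbf{Conclusion.} Take $P_t$ together with the triangle edge joining its ends, and the Hamilton cycle $C_z$ of $G[X]-z$. These are disjoint even cycles, and their complement has the perfect matching $(M_Y\setminus E(P_t))\cup\{zt_z\}$. This contradicts PM-compactness.

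The key move is to use a Hamilton cycle of $G[X]$ minus a rim neighbour of $\overline{x}$, not $C$ itself. That frees the neighbour to be matched into $T$, or into $Y$ in the Kotzig step.
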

   \begin{proof}
          Suppose to the contrary that $\overline{x}$ is not the hub of the underlying wheel of $G/\overline{X}$.
         Then $\overline{x}$ is adjacent to exactly three vertices in $X$, say $x_1$, $x_2$ and $x_3$, and thus $E_G[X,Y]$ is covered by $\{x_1, x_2, x_3\}$.
         Since $E_G[X,Y]=E_G[\{s\},Y]$ (by Eq. (\ref{eq3})), $s\in \{x_1,x_2,x_3\}$.

         First we will show that $s$ is the hub of the underlying wheel of $G/\overline{X}$. If not, then relabel the vertices so that $x_2$ is the hub of the underlying wheel of $G/\overline{X}$ and $s=x_3$ (with labels as shown in Fig. \ref{f4-2} $(a)$).
         Note that both $G[X]-x_1$ and $G[X]-x_3$ contain Hamilton cycles, say $C_{x_1}$ and $C_{x_3}$, respectively.
         Since $G[\{x_3\}\cup Y]\cong G'-t$ and $G'$ is 3-connected, $G[\{x_3\}\cup Y]$ is 2-connected.
         Since $|M_{G/X}^T\cap E_G[V(T), Y]|\geq 2$ (by Ineq. (\ref{neq6})),
         $G[V(T)\cup \{x_3\}\cup Y]$ is 2-connected. Let $e\in E_G[\{x_3\},Y]$ (by Ineq. (\ref{neq5})).
         Since $e\in E(G/X)$ and $G/X$ is matching covered, $G/X$ has a perfect matching $M_e$ containing $e$.
         Since $G-V(C_{x_3})=G[V(T)\cup \{x_3\}\cup Y]$ and $M_e$ is a perfect matching of
         $G[V(T)\cup \{x_3\}\cup Y]$, $G-V(C_{x_3})$ has at least two different perfect matchings by Theorem \ref{BE}, a contradiction to the PM-compactness of $G$. So $s$ is the hub of the underlying wheel of $G/\overline{X}$ and thus $s=x_2$.

         \begin{figure}[h]
            \centering
         \includegraphics[scale=0.3]{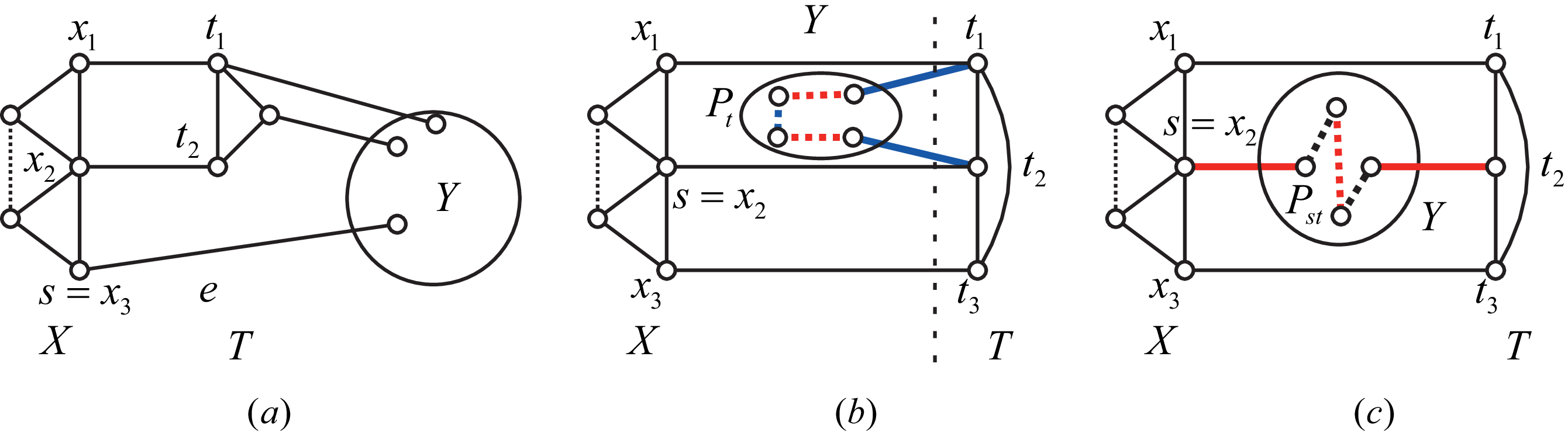}
            \caption{\label{f4-2}Illustration for Claim \ref{NC-2}.}
         \end{figure}

         Since $(N(x_1)\cup N(x_3))\cap \overline{X}\subseteq V(T)$ (by Eq. (\ref{eq3})), $e_i=x_it_i$ for $i=1,3$, and thus $V(e_2)\cap \overline{X}\subseteq \{t_2\}\cup Y$.
         Recall that $M_Y\oplus M_{G/X}^T$ contains an odd path $P_t$, and   two end vertices of $P_t$ belong to $V(T)$ (by Claim \ref{subcla-1}). Next we will show that $t_2\in V(P_t)$.

         If $x_2t_2\in M_G^X$, then we consider the plane embedding of the graph $G_p^1$.
          Since $s=x_2$, $N(x_2)\cap Y\neq \emptyset$ by Ineq. (\ref{neq5}). By planarity and the connectivity of $G[Y]$, it is embedded in the face bounded by $F_1$ or $F_2$ of $G_p^1$ (with labels as shown in Fig. \ref{f4-2} $(b)$). As $V(P_t)\cap Y\neq\emptyset$, the internal vertices of $P_t$ lie in this face and its ends lie on its boundary. Hence $t_2\in V(P_t)$.
         If $x_2y_2\in M_G^X$ for some $y_2\in Y$, then we consider the plane embedding of the graph $G_p^2$.
         In this case, $P_{st}$ is a common boundary of $F_1$ and $F_2$ in the graph $G_p^2$ (see Fig. \ref{f4-2} $(c)$).
         If $t_2\notin V(P_t)$, then $t_1$ and $t_3$ are the two end vertices of $P_t$. Since $G[Y]$ is connected and contains the internal vertices of $P_{st}$, it lies in the union of the closures of $F_1$ and $F_2$. The two boundary arcs of this region from $t_1$ to $t_3$ pass through $X$ and $t_2$, respectively. Thus the cycle $P_t+t_1t_3$ separates $t_2$ from $X$, and $N_G(t_2)\cap X=\emptyset$ by planarity.
         It implies that $xt_2\notin E(G/X)$ and thus $M_Y\oplus M_{G/X}^T$ contains an odd path $P_t'$, whose end vertices are $x$ and $t_2$ in the graph $G/X$.
          Since $P_t$ and $P_t'$ are distinct path components of $M_Y\oplus M_{G/X}^T$, they are vertex-disjoint. By Eq. (\ref{eq3}), this contradicts the separation of $t_2$ from $X$ by $P_t+t_1t_3$.
         So $t_2\in V(P_t)$.

          Relabel the vertices of $T$ so that $t_1$ and $t_2$ are the two end vertices of $P_t$.
          Then deleting the vertex sets of $P_t+t_1t_2$ and $C_{x_3}$ leaves a subgraph with the perfect matching
         $(M_Y\setminus E(P_t))\cup \{x_3t_3\}$, a contradiction to the PM-compactness of $G$. So $\overline{x}$ is the hub of the underlying wheel of $G/\overline{X}$.
      \end{proof}

   Recall that any multiple edges of $G/\overline{X}$ are parallel to spokes. By Claim \ref{NC-2}, $G[X]=C$ and each vertex in $X$ has degree two in $G[X]$.

         \begin{Subcla}
         \label{subcla-3}
         For $1\leq i\leq 3$, if $|E(P_i)|>1$ and $s$ is not an internal vertex of $P_i$, let $P_i=x_ia_1a_2\dots a_{2q}x_{i+1}$, where the subscripts are taken modulo 3, then we have $\{a_1t_i,a_{2q}t_{i+1}\}\subseteq E(G)$.
      Moreover, both $t_i$ and $t_{i+1}$ have neighbors in $Y$.
      \end{Subcla}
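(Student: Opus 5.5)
The plan is to combine the structure already established with planarity and Prop. \ref{PM-P2}. By Claim \ref{NC-2}, $\overline{x}$ is the hub, so $G[X]=C$ and every vertex of $X$ has degree two in $G[X]$.

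\textbf{Step 1: neighbours of the $a_j$.} Since $G$ is 3-connected, every internal vertex $a_j$ of $P_i$ has a neighbour in $\overline{X}=V(T)\cup Y$. Because $s$ is not an internal vertex of $P_i$, Eq. (\ref{eq3}) gives $N_G(a_j)\cap Y=\emptyset$. Hence $N_G(a_j)\cap\overline{X}\subseteq V(T)$.

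\textbf{Step 2: planarity.} Use the unique embedding of $G_p^j$ ($j=1,2$) from Corollary \ref{Bi3}. The internal vertices of $P_i$ lie on the boundary of the face bounded by $F_i$, and the only vertices of $T$ on that boundary are $t_i$ and $t_{i+1}$. So each $a_j$ is adjacent to $t_i$ or $t_{i+1}$ (possibly both).

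\textbf{Step 3: $a_1t_i\in E(G)$.} Suppose not. Then Step 2 gives $a_1t_{i+1}\in E(G)$.
\begin{itemize}
\item If $e_{i+1}=x_{i+1}t_{i+1}$, consider $D=a_1a_2\dots a_{2q}x_{i+1}t_{i+1}a_1$. It is an even cycle, and it is $M_G^X$-alternating: the $a_j$ are matched along $P_i$, $x_{i+1}t_{i+1}\in M_G^X$, and the edges $t_{i+1}a_1$ and $a_{2q}x_{i+1}$ are not in $M_G^X$. Now take the facial cycle $F_{i+2}$, which runs through $x_{i+2},t_{i+2},x_i,t_i$ along $P_{i+2}$. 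It is $M_G^X$-alternating and vertex-disjoint from $D$, contradicting Prop. \ref{PM-P2}.
\item If $e_{i+1}$ goes into $Y$, then we are in $G_p^2$, and $e_{i+1}$ together with $P_{st}$ leads from $x_{i+1}$ to $t_{i+1}$. I would close $D$ through this path instead of the edge $x_{i+1}t_{i+1}$. Again this gives an $M_G^X$-alternating cycle disjoint from the opposite alternating face $F_{i+2}$, and the same contradiction follows.
\end{itemize}
The symmetric argument, reading $P_i$ from $x_{i+1}$, gives $a_{2q}t_{i+1}\in E(G)$.

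\textbf{Step 4: $t_i$ and $t_{i+1}$ have neighbours in $Y$.} Use Claim \ref{subcla-1}: a vertex of $T$ not on $P_t$ has at most one neighbour in $X$. Step 3 gives $t_i$ the neighbour $a_1$. It also has the neighbour $x_i$ whenever $e_i=x_it_i$, and otherwise a second $X$-neighbour obtained from the face structure. So $|N_G(t_i)\cap X|\geq 2$, which forces $t_i\in V(P_t)$, and likewise $t_{i+1}\in V(P_t)$.

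Now let $f$ be the end edge of $P_t$ at $t_i$. It lies in $M_{G/X}^T$ and leaves $T$, so its other end is $x$ or a vertex of $Y$. It cannot be $x$, since $x$ is an end of the other path component of $M_Y\oplus M_{G/X}^T$. Hence $f$ goes into $Y$, and $t_i$ has a neighbour in $Y$. The same holds for $t_{i+1}$.

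\textbf{Main obstacle.} The delicate part is the bookkeeping when some $e_k$ goes into $Y$ via $s$. One must check that the alternating cycle built along $P_{st}$ really avoids $F_{i+2}$. One must also justify the second $X$-neighbour of $t_i$ in Step 4 when $e_i\neq x_it_i$. This is where I expect to fall back on a more careful case analysis of the faces of $G_p^2$.
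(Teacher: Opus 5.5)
Your Step 3 is the paper's argument. The cycle $D$, or its variant through $P_{st}$, is exactly the facial cycle $F_0$ of $G_p^j+a_1t_{i+1}$ through $a_1,t_{i+1},x_{i+1}$. The disjointness you were unsure about does hold. When $s=x_{i+1}$, the path $P_{st}$ begins with $e_{i+1}$ and, by the labelling convention, ends precisely at $t_{i+1}$. Meanwhile $F_{i+2}$ uses only $P_{i+2}$, $t_i$, $t_{i+2}$ and the rungs $x_it_i$, $x_{i+2}t_{i+2}$. Step 4 in the case $e_i=x_it_i$ is the contrapositive of the paper's argument via Claim~\ref{subcla-1}, and it is correct.

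The gap is Step 4 when $e_i\neq x_it_i$. There you assert a second neighbour of $t_i$ in $X$ ``from the face structure'', but nothing you have established forces one. The vertex $t_i$ may well have $a_1$ as its only neighbour in $X$. In that case Claim~\ref{subcla-1} yields nothing, and you cannot conclude $t_i\in V(P_t)$. Proving $t_i\in V(P_t)$ in this configuration is in fact a separate matter: the paper only does it later, in Claim~\ref{subcla-5}, using the separation argument of Claim~\ref{NC-2}.

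The missing observation is that this case does not need $P_t$ at all. If $e_i$ does not end at $t_i$, it ends in $Y$. By the convention $V(e_k)\cap\overline{X}\subseteq\{t_k\}\cup Y$, the other two edges of $M_G^X\cap\partial_G(X)$ then cover $V(T)\setminus\{t_i\}$. So the $M_G^X$-edge at $t_i$ must go into $Y$; it is the last edge of $P_{st}$. Hence $t_i$ has a neighbour in $Y$ directly. This is precisely the implicit step ``Then $e_i=x_it_i$'' in the paper's proof by contradiction. With it, your argument is complete.
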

      \begin{proof}
      We first show that $a_1t_i\in E(G)$.
      Since $d_G(a_1)\geq 3$ and $d_C(a_1)=2$, $E_G[\{a_1\},V(T)\cup Y]\neq \emptyset$.
      Since $s$ is not an internal vertex of $P_i$, $s\neq a_1$.
      Since $E_G[X\setminus \{s\}, Y]=\emptyset$ (by Eq. (\ref{eq3})),
      $E_G[\{a_1\},V(T)\cup Y]=E_G[\{a_1\},V(T)]$ and thus
      $E_G[\{a_1\},V(T)]\neq \emptyset$. Since $P_i$ and $t_it_{i+1}$ lie on the facial cycle $F_i$ of $G_p^j$, $j\in \{1,2\}$, either $a_1t_i\in E(G)$ or $a_1t_{i+1}\in E(G)$ by the planarity of $G$. If $a_1t_{i+1}\in E(G)$, then
      all the vertices in $\{a_1,t_{i+1},x_{i+1}\}$ belong to a common facial cycle $F_0$ of $G_p^j+a_1t_{i+1}$ and $F_0$ is an $M_G^X$-alternating cycle (see Fig. \ref{fn9-2}). The facial cycle $F_{i+2}$, the subscript $i+2$ is taken modulo 3, is an $M_G^X$-alternating cycle vertex-disjoint from $F_0$, a contradiction to Prop. \ref{PM-P2}. So $a_1t_i\in E(G)$. Similarly, we can get that $a_{2q}t_{i+1}\in E(G)$.
\begin{figure}[h]
         \centering
         \includegraphics[scale=0.3]{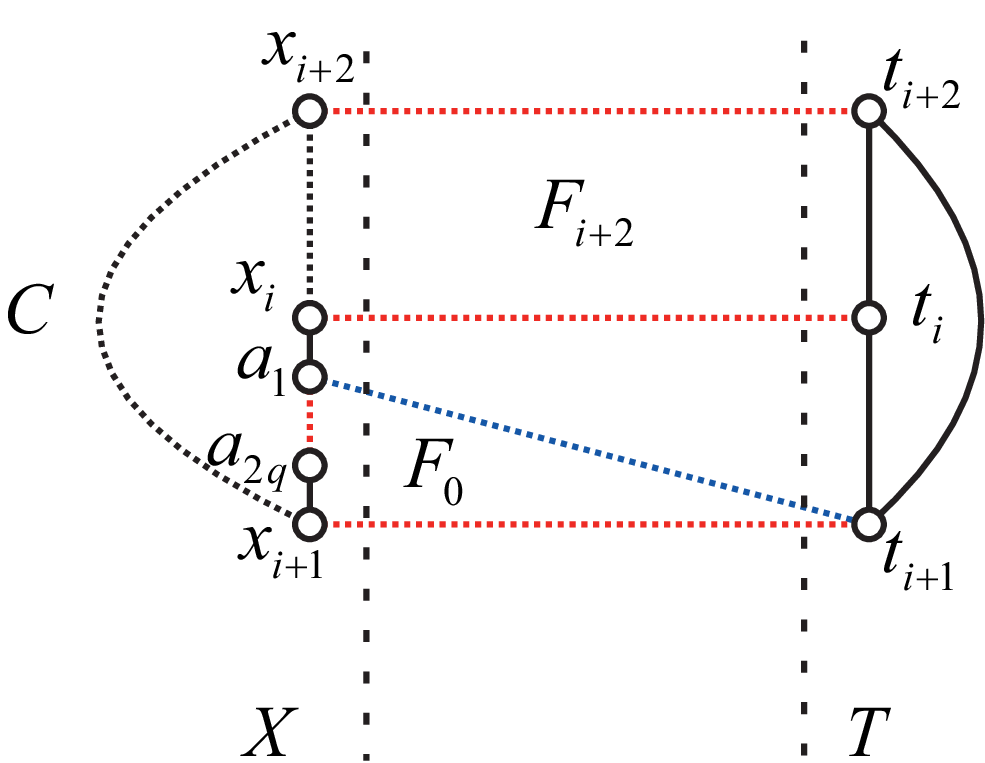}
         \caption{\label{fn9-2}Illustration for the case that $a_1t_{i+1}\in E(G)$ in Claim \ref{subcla-3}.}
      \end{figure}

      Suppose that at least one of $\{t_i,t_{i+1}\}$ has no neighbors in $Y$, say $t_i$.
      Then $e_i=x_it_i$.
      Recall that $M_Y\oplus M_{G/X}^T$ contains an odd path $P_t$, and the end vertices of $P_t$  belong to $V(T)$.
      Then $t_i\notin V(P_t)$.
      By Claim \ref{subcla-1}, $|N_G(t_i)\cap X|\leq 1$, contradicting $\{a_1,x_i\}\subseteq N_G(t_i)\cap X$.
      So both $t_i$ and $t_{i+1}$ have neighbors in $Y$.
      \end{proof}
      \begin{Subcla}
         \label{subcla-4}
         $|M_G^X\cap E_G[X,Y]|=1$.
      \end{Subcla}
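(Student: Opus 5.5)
The plan is to argue by contradiction. Equation (\ref{eq3}) already gives $|M_G^X\cap E_G[X,Y]|\leq 1$, so it suffices to rule out $M_G^X\cap E_G[X,Y]=\emptyset$. In that case $e_i=x_it_i$ for $i=1,2,3$, and we work in the plane graph $G_p^1$ with its three $M_G^X$-alternating facial cycles $F_1,F_2,F_3$.

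\textbf{Step 1: locate $Y$ in the embedding.} By Ineq. (\ref{neq5}) the vertex $s$ has neighbours in $Y$. Moreover $G[Y]$ is connected and avoids $V(G_p^1)$. Hence all of $Y$ lies inside one face of $G_p^1$, and that face must be incident with $s$; relabel so that it is the face bounded by $F_i$ and $s\in V(P_i)$. By planarity the vertex $t_{i+2}$, which is not on $F_i$, has no neighbour in $Y$. In particular $t_{i+2}$ is not an end of the path $P_t$ of Claim \ref{subcla-1}, since both ends of $P_t$ are joined to internal vertices of $P_t$ lying in $Y$.

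\textbf{Step 2: the other two segments of $C$ are single edges.} The vertex $s$ lies on $P_i$, so it is not an internal vertex of $P_{i+1}$ or of $P_{i+2}$. If $|E(P_{i+1})|>1$ or $|E(P_{i+2})|>1$, then Claim \ref{subcla-3}, applied to that segment, says that $t_{i+2}$ has a neighbour in $Y$, contradicting Step 1. Hence $P_{i+1}=x_{i+1}x_{i+2}$ and $P_{i+2}=x_{i+2}x_i$. Also, since $t_{i+2}\notin V(P_t)$, Claim \ref{subcla-1} gives $N_G(t_{i+2})\cap X=\{x_{i+2}\}$.

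\textbf{Step 3: produce two disjoint even cycles leaving a perfect matching.} This contradicts PM-compactness. The first cycle is the $4$-cycle $D_1=x_{i+1}x_{i+2}t_{i+2}t_{i+1}x_{i+1}$. For the second, two options:
\begin{itemize}
\item If $P_i$ has internal vertices and $s$ is not internal to $P_i$, Claim \ref{subcla-3} gives $a_1t_i\in E(G)$. Then $D_2=x_ia_1t_ix_i$ is odd, so I would instead use the even cycle $t_ia_1a_2\dots a_{2q}x_{i+1}\dots$; more cleanly, I would pair $D_1$ with $P_t+t_it_{i+1}$ after shifting.
\item The cleanest route is to take $P_t+t_it_{i+1}$ (its ends must be $t_i,t_{i+1}$ by Step 1) together with an even cycle inside $X\cup\{t_{i+2}\}$. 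For example, with $x_i,x_{i+1},x_{i+2}$ consecutive on $C$, the cycle $x_{i+2}t_{i+2}$ plus an even path of $C$ through $x_{i+2}$ that uses a second neighbour of $t_{i+2}$ in $X$. That second neighbour does not exist, so I would instead use $D_1'=x_ix_{i+2}t_{i+2}\dots$.
\end{itemize}
Concretely, I would delete $V(P_t)$ and $V(C)\cup\{t_{i+2}\}$ minus a suitable even set. The remainder is matched by $M_Y\setminus E(P_t)$ together with an alternating matching of $C$ and the edge $x_{i+2}t_{i+2}$, exactly as in the end of Claim \ref{NC-2}, where $C_{x_3}$ was used.

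\textbf{Main obstacle.} The hard step is Step 3: choosing the second even cycle so that it is vertex-disjoint from $P_t+t_it_{i+1}$ and the leftover still has a perfect matching. My first attempt would mimic the final paragraph of Claim \ref{NC-2}. Take the even cycle $P_t+t_it_{i+1}$ and an even cycle through $x_{i+2}$, namely $C-x'$ for a suitable $x'$. This works only if $G[X]-x'$ is Hamiltonian, which holds when $\overline{x}$ is the hub; we know this from Claim \ref{NC-2}, since $G/\overline{X}$ is a wheel. Then match $x'$ to $t_{i+2}$ via $e_{i+2}$, taking $x'=x_{i+2}$, and match the rest of $Y$ by $M_Y\setminus E(P_t)$. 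If $P_t$ has length $1$, so that no vertex of $Y$ is used, I would instead use the fallback: $G[Y]$ has a perfect matching, and a second perfect matching of $G[Y]$ would give an even cycle in $Y$ disjoint from $D_1$. The unique-perfect-matching case would then be handled by Theorem \ref{BE} and the 2-connectivity of $G[\{s\}\cup Y]$.
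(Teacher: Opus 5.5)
Your Steps 1 and 2 are essentially correct. In Step 1 you must also exclude the face of $G_p^1$ bounded by $C$ itself, which is incident with $s$ as well. Ineq.~(\ref{neq6}) does this, and combined with Step 1 it shows that $t_i$ and $t_{i+1}$ both have neighbours in $Y$. The genuine gap is Step 3: it never exhibits the two disjoint even cycles, and each concrete candidate you propose fails.
\begin{itemize}
\item $D_1$ and $P_t+t_it_{i+1}$ share the vertex $t_{i+1}$.
\item Since $N_G(t_{i+2})\cap X=\{x_{i+2}\}$, the graph $G[X\cup\{t_{i+2}\}]$ is the odd cycle $C$ plus a pendant edge, so it contains no even cycle.
\item Your assertion that $G[X]-x'$ is Hamiltonian because $\overline{x}$ is the hub is backwards. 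By Claim~\ref{NC-2}, $G[X]=C$, so $G[X]-x'$ is a path. The cycles $C_{x_1},C_{x_3}$ in the proof of Claim~\ref{NC-2} existed only under the refuted hypothesis that $\overline{x}$ is a rim vertex, when $G[X]$ contains the hub.
\item More generally, $s$ is the only vertex of $X$ with neighbours in $Y$, and $t_{i+2}$ has none. So any pairing with $P_t+t_it_{i+1}$ works only if $G[Y]-V(P_t)$ has two perfect matchings, and nothing guarantees that.
\end{itemize}

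What is missing is the paper's construction. It takes a facial cycle on the far side of $P_i$ as the first cycle, and obtains the second from Theorem~\ref{BE} applied to an \emph{even} $2$-connected graph. Write $P_i=a_0a_1\cdots a_{2q+1}$ with $a_0=x_i$, $a_{2q+1}=x_{i+1}$, and $s=a_j$.

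Suppose first that $j$ is even, and choose $y\in N_G(t_i)\cap Y$. By Eq.~(\ref{eq3}), $G[\{s\}\cup Y]\cong G'-t$ is $2$-connected. The path $y\,t_i\,a_0a_1\cdots a_j$ is an ear joining its distinct vertices $y$ and $s$, so $H=G[Y\cup\{t_i,a_0,\dots,a_j\}]$ is $2$-connected. Moreover, $M_G^X\cap E(H)$ is a perfect matching of $H$: it consists of $t_ia_0$, the edges $a_1a_2,\dots,a_{j-1}a_j$, and the $M_G^X$-edges inside $Y$. Theorem~\ref{BE} then gives a second perfect matching of $H$, hence an $M_G^X$-alternating cycle inside $H$. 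This cycle is vertex-disjoint from the $M_G^X$-alternating facial cycle $F_{i+1}$ (your $D_1$), contradicting Prop.~\ref{PM-P2}.

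If $j$ is odd, argue symmetrically from $t_{i+1}$ along $a_{2q+1}a_{2q}\cdots a_j$, using $F_{i+2}$. Here $D_1=F_{i+1}$ cannot serve, since it contains $x_{i+1}$, which may be $s$.

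Your fallback cites Theorem~\ref{BE} and the $2$-connectivity of $G[\{s\}\cup Y]$. But that graph has odd order, so Kotzig's theorem says nothing about it. The ear through $t_i$ along $P_i$, together with the parity-dependent choice of facial cycle, is exactly what your proposal lacks. With this argument your Step 2 becomes unnecessary.
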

      \begin{proof}
      Suppose $|M_G^X\cap E_G[X,Y]|\neq 1$. By Eq. (\ref{eq3}), $|M_G^X\cap E_G[X,Y]|\leq 1$, and thus $|M_G^X\cap E_G[X,Y]|=0$.
      Then $M_G^X\cap \partial_G(X)=M_G^X\cap E_G[X,V(T)]=\{e_1,e_2,e_3\}$.
      Recall that $G_p^1=G[E(C)\cup E(T)\cup \{e_1,e_2,e_3\}]$, and
      $F_i=P_i+x_it_it_{i+1}x_{i+1}$ ($i=1,2,3$) is a facial cycle of $G_p^1$, where the subscripts are taken modulo 3.
      Since $G'$ is 3-connected, $G'-t$ is 2-connected and $G'-x-t$ is connected. Note that $G[Y]=G'-x-t$. Since $\displaystyle\sum_{i=1}^3|E(P_i)|=|X|\geq 5$, we may cyclically relabel the indices so that $|E(P_3)|\geq 3$. Let $P_3=x_3a_1a_2\dots a_{2q}x_1$ with $a_0=x_3$, $a_{2q+1}=x_1$ and $q\geq 1$.
      \begin{figure}[h]
         \centering
         \includegraphics[scale=0.3]{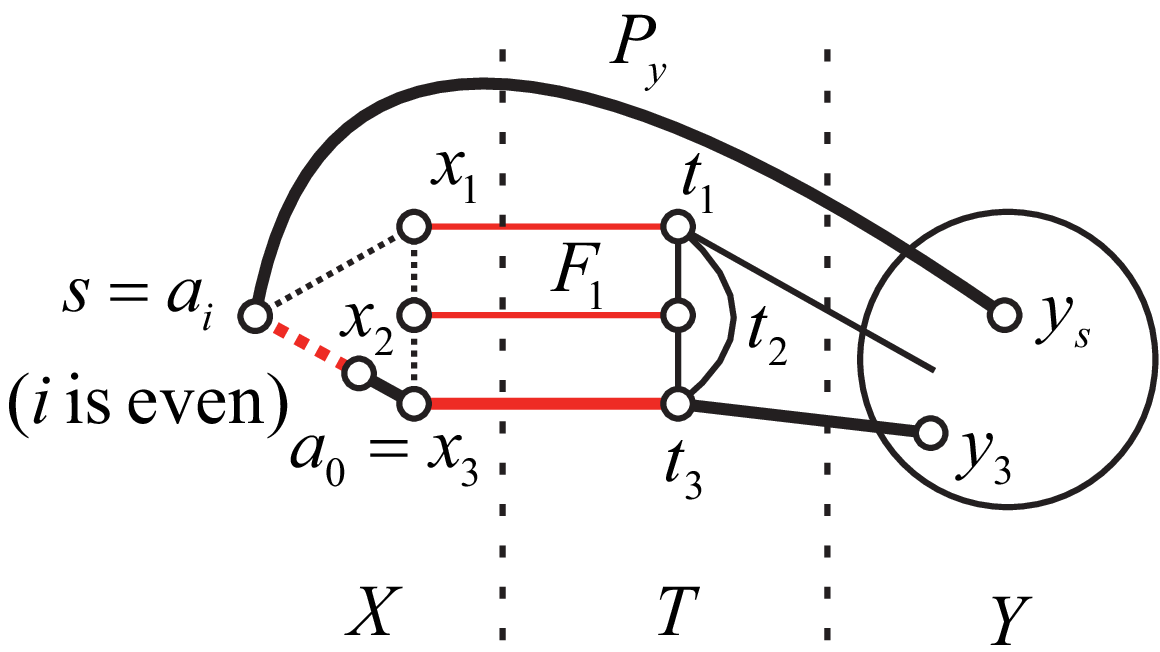}
         \caption{\label{f4-1}Illustration for the proof of Claim \ref{subcla-4}.}
      \end{figure}

      If both $t_1$ and $t_3$ have neighbors in $Y$,
      then by planarity, $G[Y]$ is embedded in the face bounded by $F_3$ of $G_p^1$ (see Fig. \ref{f4-1}), and thus
      $N_G(Y)\subseteq V(P_3)\cup \{t_1,t_3\}$.
      Since $|V(P_3)|$ is even, we can assume that
       $s=a_i$ for some even integer $0\leq i\leq 2q+1$ (by symmetry).
      Let $y_3\in N_G(t_3)\cap Y$.
      Since $|N_G(s)\cap Y|\geq 2$ (by Ineq. (\ref{neq5})), $s\in V(P_3)$ and $s$ has a neighbor $y_s$ in $Y$ distinct from $y_3$. Let $P_y=y_3t_3x_3a_1a_2\dots a_iy_s$. Then $P_y$ is an odd $M_G^X$-alternating path with end vertices $y_3$ and $y_s$.
      Note that all internal vertices of $P_y$ belong to $\overline{Y}$ and $P_y$ is vertex-disjoint from $F_1$.
      Since $E_G[X,Y]=E_G[\{s\},Y]$ (by Eq. (\ref{eq3})),
      $G[\{s\}\cup Y]=G'[\{x\}\cup Y]$.
      Since $G'-t$ is isomorphic to $G[\{s\}\cup Y]$ and it is 2-connected, $G[\{s\}\cup Y]$ is 2-connected.
      Since the end vertices of $P_y$ are $y_3$ and $y_s$ and $y_3\neq y_s$, $G[\{s\}\cup Y]+P_y$ is 2-connected. Since $s\in V(P_y)$, $G[\{s\}\cup Y]+P_y$ is a spanning subgraph of the graph $G[V(P_y)\cup Y]$.
      So $G[V(P_y)\cup Y]$ is 2-connected. Since $M_G^X\cap E[V(P_y)\cup Y]$ is a perfect matching of $G[V(P_y)\cup Y]$,
      the graph $G[V(P_y)\cup Y]$ contains at least two perfect matchings by Theorem \ref{BE}.
      Since $F_1$ is an $M_G^X$-alternating cycle vertex-disjoint from the graph $G[V(P_y)\cup Y]$,
      $G-V(F_1)$ contains at least two perfect matchings, a contradiction to the PM-compactness of $G$.
      So either $t_1$ or $t_3$ has no neighbors in $Y$.
      By Claim \ref{subcla-3}, $s$ is an internal vertex of $P_3$, i.e., $s\in V(P_3)\setminus \{x_1,x_3\}$.

       By Ineq. (\ref{neq6}), at least two vertices in $T$ have neighbors in $Y$. By the preceding discussion, $t_2$ must have a neighbor in $Y$. By planarity, $G[Y]$ is embedded in the face bounded by $F_1$ or $F_2$ of $G_p^1$, and thus each internal vertex of $P_3$ has no neighbors in $Y$. This contradicts $s\in V(P_3)\setminus \{x_1,x_3\}$ and $N_G(s)\cap Y\neq \emptyset$. So $|M_G^X\cap E_G[X,Y]|=1$.
      \end{proof}

      \begin{Subcla}
         \label{subcla-5}
         There is an integer $i\in \{1,2,3\}$ such that $s=x_i$, $|E(P_i)|=|E(P_{i-1})|=1$ and $t_i\in V(P_t)$,
         where the subscripts are taken modulo 3.
      \end{Subcla}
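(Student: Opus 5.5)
We work in the setting of Claim \ref{cla8}: $G/T$ is assumed not to be a brick, Eq. (\ref{eq3}) holds, $G[X]=C$, and $\overline{x}$ is the hub by Claim \ref{NC-2}. By Claim \ref{subcla-4}, $M_G^X$ contains exactly one edge $sy_s$ of $E_G[X,Y]$, so we may use the graph $G_p^2$ with its facial $M_G^X$-alternating cycles $F_1,F_2,F_3$. Since $sy_s\in\{e_1,e_2,e_3\}$, $s$ is an endpoint $x_i$ of the paths $P_j$; fix $i$ with $s=x_i$, so $e_i=x_iy_s$. The odd path $P_{st}$ of $M_G^X\oplus M_Y$ runs from $s$ through $Y$ to some $t_j$. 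The remaining claims are $|E(P_i)|=|E(P_{i-1})|=1$ and $t_i\in V(P_t)$.

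\emph{Step 1: $P_i$ and $P_{i-1}$ are single edges.} Suppose, say, $|E(P_i)|\ge 3$. Then $s=x_i$ is not an internal vertex of $P_i$, so Claim \ref{subcla-3} applies. It gives $a_1t_i\in E(G)$, and it shows that $t_i$ has a neighbour in $Y$. Since $e_i$ is matched into $Y$, $t_i$ is matched by $M_G^X$ only within $\overline{X}$, so it is covered through $P_{st}$ or $M_G^X\cap E[V(T),Y]$. I would then locate two disjoint even cycles. The first is the face of $G_p^2+a_1t_i$ through $x_i,a_1,t_i$, which is $M_G^X$-alternating by the same face-parity argument as in the proof of Claim \ref{subcla-3}. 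The second is the alternating facial cycle on the opposite side of $P_{st}$ (one of $F_{i+1}$, $F_{i+2}$). Prop. \ref{PM-P2} then gives a contradiction. The argument for $P_{i-1}$ is symmetric, using the edge $a_{2q}t_i$ at the other end.

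\emph{Step 2: $t_i\in V(P_t)$.} Suppose instead that $t_i$ is the vertex $t_k$ of $T$ outside $P_t$. Claim \ref{subcla-1} gives $|N_G(t_i)\cap X|\le 1$. Let $t_{i-1},t_{i+1}$ be the ends of $P_t$. I would then imitate the final step of the proof of Claim \ref{NC-2}. Delete the even cycle $P_t+t_{i-1}t_{i+1}$ together with an even cycle in $G[X]$. That second cycle is obtained from $C$ using that $P_i,P_{i-1}$ are single edges: it is $C$ minus the vertex $x_{i}$, or a path of $C$ closed through the hub $\overline{x}$-side edges. The remainder should then be matched by $(M_Y\setminus E(P_t))$ together with one edge at $t_i$, contradicting PM-compactness. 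If instead the planar position forces $P_t$ to separate $t_i$ from $X$, I would argue as in Claim \ref{NC-2} with the two boundary arcs of $F_{i-1}\cup F_i$.

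\emph{Main obstacle.} I expect the hardest part to be the planarity bookkeeping in Step 1: determining on which faces of $G_p^2$ the graph $G[Y]$ and the path $P_{st}$ lie, and hence which facial cycle is disjoint from the new alternating cycle. I would first handle the case $j=i$ for the end of $P_{st}$, and then reduce $j\neq i$ to it by the symmetry $F_i\leftrightarrow F_{i-1}$.
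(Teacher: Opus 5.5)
Your Step 1 breaks down at its first cycle. There is no case $j\neq i$ to handle. By Claim \ref{subcla-4} and the labelling of $T$, $e_{i\pm1}=x_{i\pm1}t_{i\pm1}$, so $t_i$ is the vertex of $T$ matched into $Y$. Hence $P_{st}$ runs from $x_i$ to $t_i$, and $F_i=P_i+x_{i+1}t_{i+1}t_i+P_{st}$. The chord $a_1t_i$ splits $F_i$ into $x_ia_1t_i+P_{st}$, of length $2+|E(P_{st})|$, and $a_1P_ix_{i+1}t_{i+1}t_ia_1$, of length $|E(P_i)|+2$. Both lengths are odd. More directly, $a_1a_2\in M_G^X$, so $x_ia_1$ and $a_1t_i$ are consecutive non-matching edges. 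The face-parity trick of Claim \ref{subcla-3} works only for the crossed chord $a_1t_{i+1}$, which that claim excludes. The chord $a_1t_i$ it actually produces yields no alternating cycle, so Prop. \ref{PM-P2} cannot be applied. The missing idea is a planarity confinement that uses the \emph{second} conclusion of Claim \ref{subcla-3}. Take the prism subdivision $G_p=G[E(C)\cup E(T)\cup\{e_{i-1},e_{i+1},a_1t_i\}]$. The only face of $G_p$ containing both $x_i$ and $t_i$ is bounded by $P_{i-1}+x_{i-1}t_{i-1}t_ia_1x_i$. So $P_{st}$, and with it the connected graph $G[Y]$, lies in that face. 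This forces $N_G(t_{i+1})\cap Y=\emptyset$, contradicting Claim \ref{subcla-3}. The case of $P_{i-1}$ is symmetric.

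The main route of your Step 2 also fails. By Claim \ref{NC-2}, $\overline{x}$ is the hub, so $G[X]=C$ is a single odd cycle and contains no even cycle; $C-x_i$ is just a path. The deletion argument at the end of Claim \ref{NC-2} worked only because there $\overline{x}$ was not the hub and $G[X]-x_3$ was Hamiltonian. A replacement cycle would have to leave $X$. You give no way to do this that avoids $P_t$, respects $|N_G(t_i)\cap X|\le 1$, and leaves a remainder matched by $M_Y\setminus E(P_t)$ plus one edge at $t_i$. Your fallback is the right argument, and it needs no hypothesis. If $t_i\notin V(P_t)$, then $P_t$ joins $t_{i-1}$ to $t_{i+1}$ through $Y$, and $G[Y]$ lies in the closure of $F_{i-1}\cup F_i$. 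So $P_t+t_{i-1}t_{i+1}$ always separates $t_i$ from $X$. The contradiction then follows verbatim as in the $x_2y_2\in M_G^X$ case of Claim \ref{NC-2}, which is exactly how the paper finishes.
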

      \begin{proof}
      Since $|M_G^X\cap E_G[X,Y]|=1$ (by Claim \ref{subcla-4}) and $E_G[X,Y]=E_G[\{s\},Y]$ (by Eq. (\ref{eq3})), $|M_G^X\cap E_G[\{s\},Y]|=1$. Let $M_G^X\cap E_G[\{s\},Y]=\{sy_s\}$ with $y_s\in Y$. Recalling that $M_G^X\cap \partial_G(X)=\{e_1,e_2,e_3\}$, $sy_s\in \{e_1,e_2,e_3\}$.
      After relabelling the indices, assume that $sy_s=e_2$. Then $s=x_2$ and both $P_1$ and $P_2$ contain the vertex $s$. We next show that $|E(P_1)|=|E(P_2)|=1$.

      Suppose that $|E(P_2)|>1$.
      Let $P_2=x_2a_1a_2\dots a_{2q}x_3$ with $q\geq 1$.
       Since $s$ is not an internal vertex of $P_2$, $a_1t_2\in E(G)$ and both $t_2$ and $t_3$ have neighbors in $Y$ by Claim \ref{subcla-3}.
      Let $G_p=G[E(C)\cup E(T)\cup \{e_1,e_3,a_1t_2\}]$. Then $G_p$ is a subdivision of a prism. By Corollary \ref{Bi3}, $G_p$ has a unique plane embedding.
      Recall that $P_{st}$ is a path with end vertices $s$ and $t_2$.
       The internal vertices of $P_{st}$ lie in the face bounded by $P_1+x_1t_1t_2a_1x_2$ of $G_p$. Since $G[Y]$ is connected, it is embedded in that face (see Fig. \ref{fn11} ($a$)). By planarity, $N_G(Y)\subseteq V(P_1)\cup \{t_1,t_2,a_1\}$. So $N_G(t_3)\cap Y=\emptyset$, a contradiction to Claim \ref{subcla-3}. Thus $|E(P_2)|=1$. Similarly, $|E(P_1)|=1$.

      \begin{figure}[h]
         \centering
         \includegraphics[scale=0.3]{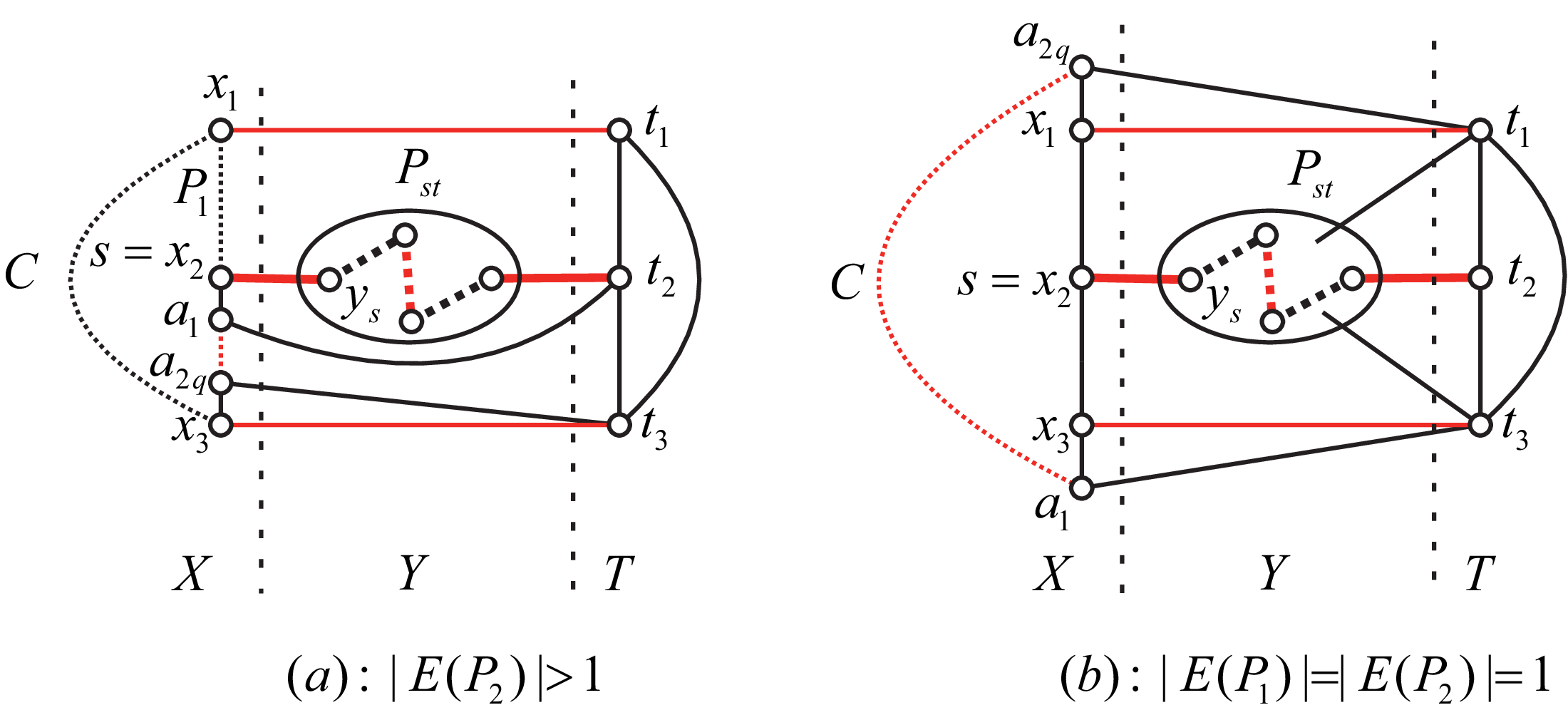}
         \caption{\label{fn11}Illustration for the proof of Claim \ref{subcla-5}.}
      \end{figure}
      Here $s=x_2$, $e_2=sy_s$, and $e_i=x_it_i$ for $i=1,3$. Thus the $G_p^2$ separation argument in the $x_2y_2\in M_G^X$ case of the proof of Claim \ref{NC-2} (Fig. \ref{f4-2}(c)) applies and gives $t_2\in V(P_t)$.
      \end{proof}

      By Claim \ref{subcla-5}, we may relabel the vertices and paths so that
      $s=x_2$, $|E(P_1)|=|E(P_2)|=1$ and the end vertices of $P_t$ are $t_2$ and $t_3$.
      Then $|E(P_3)|>1$, $t_1\notin V(P_t)$ and $e_1=x_1t_1$.
       By Claim \ref{subcla-3}, $P_3$ has an internal vertex adjacent to $t_1$.
      Then $|N_G(t_1)\cap X|\geq 2$, a contradiction to Claim \ref{subcla-1}.
      From the preceding discussion, $G/T$ is a brick.

   \subsection{Proof of Claim \ref{cla7}}
Recall that $Y=V(G)\setminus (X\cup V(T))$ and $T=t_1t_2t_3t_1$ is a triangle of $G/X\rightarrow x$. Since $x\in V(T)$, relabel its vertices so that $t_3=x$ in $G/X$.
Then $N_G(t_i)\cap X\neq \emptyset$ for $i=1,2$.

\begin{Subcla}
   \label{subcas2-1}
   If $|N_G(\{t_1,t_2\})\cap X|=1$, then $G/T'\rightarrow t'$ is a brick,
   where $T'=x't_1t_2x'$ and $\{x'\}=N_G(\{t_1,t_2\})\cap X$.
\end{Subcla}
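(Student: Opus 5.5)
Since $N_G(\{t_1,t_2\})\cap X=\{x'\}$ and both $t_1,t_2$ have a neighbour in $X$, the vertex $x'$ is adjacent to both $t_1$ and $t_2$, so $T'=x't_1t_2x'$ is a triangle of $G$. The plan is to view $G/T'$ as a splicing of two bricks and then apply Theorem \ref{2.1}.

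First I identify the two sides. The cut $\partial_{G/T'}(X\setminus\{x'\})$ separates $X\setminus\{x'\}$ from $Y\cup\{t'\}$. Every edge of $G$ between $X$ and $\overline X$ that leaves $X\setminus\{x'\}$ goes to $Y$, because no vertex of $X\setminus\{x'\}$ sees $t_1$ or $t_2$. In $G/T'$ the vertex $t'$ absorbs $x'$ together with the edges from $x'$ to $Y$.

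Contracting $Y\cup\{t'\}$ in $G/T'$ gives exactly $G/\overline X$: the vertex $\overline x$ of $G/\overline X$ receives precisely the edges of $\partial_G(X)$, and those at $x'$ are still incident with $x'$. Since $x'\in X$, however, the shore sets must be set up differently, so I use the cut $\partial_{G/T'}(X')$ with $X'=(X\setminus\{x'\})\cup\{t'\}$, and write $G/T'$ as a splicing of $(G/T')/\overline{X'}$ and $(G/T')/X'$.

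\textbf{Cleaner route.} I expect the cleanest choice is the cut $\partial_{G/T'}(Y)$. Then $(G/T')/Y$ is $G/(Y\cup V(T))$ with $T'$ contracted. Since $E_G[X,\{t_1,t_2\}]$ consists only of edges at $x'$, contracting $\{x',t_1,t_2\}$ then $Y$ gives the same graph as $G/\overline X$, up to multiple spokes at $x'$. Hence it is an odd wheel brick.

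The other side, $(G/T')/\overline Y$, has all of $X\cup\{t_1,t_2\}$ contracted to a single vertex. This is precisely $G/X$ with the triangle $T=xt_1t_2$ contracted, i.e. $G'=(G/X)/T$, which is a brick by the induction hypothesis. So $G/T'\in G'(t)\odot (G/\overline X)(\overline x)$, and by Proposition \ref{2.4} and Lemma \ref{SM} it is matching covered and bicritical.

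It remains to prove 3-connectivity via Theorem \ref{2.1}. Suppose $G/T'$ is not a brick. Then there are $a\in Y$ and $b\in X\cup\{t'\}$ (with $t'$ in the role of $x'$) covering $E_{G/T'}[Y,\overline Y]$. Because $|Y|\ge2$ and $|X|\ge5$, the set $\{a,b\}$ is a 2-vertex cut of $G/T'$. Since $G$ is 3-connected, $t'\in\{a,b\}$, so $b=t'$. This means $E_G[X\setminus\{x'\},Y]$ is covered by the single vertex $a\in Y$, and that $E_G[\{t_1,t_2,x'\},\cdot]$ accounts for the rest. I then derive a contradiction exactly as in the proof of Claim \ref{cla8}:
\begin{enumerate}
\item use a perfect matching $M_G^X$ with $|M_G^X\cap\partial_G(X)|\ge3$, which can contain at most one edge at $a$ and one at $x'$;
\item use the odd Hamilton cycle of $G[X]$ and an even subdivision of a prism to produce two disjoint $M_G^X$-alternating facial cycles, contradicting Prop. \ref{PM-P2};
\item alternatively, use Theorem \ref{BE} on a 2-connected piece to contradict PM-compactness.
\end{enumerate}

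The main obstacle is this last step. The covering pair now sits on the $Y$ side rather than the $X$ side, so the analogues of Claims \ref{subcla-1}--\ref{subcla-5} must be redone with the roles of $X$ and $Y$ swapped. I would first try to show that $\overline x$ must be the hub, using the same Hamilton-cycle/Theorem \ref{BE} argument, and then use planarity to force two disjoint alternating cycles.
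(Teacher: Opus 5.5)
Your setup is the same as the paper's. You splice along $\partial_{G/T'}(Y)$, identify $(G/T')/\overline{Y}$ with the brick $G'$, and identify $(G/T')/Y$ with a graph whose underlying odd wheel is that of $G/\overline{X}$. Theorem \ref{2.1} then reduces the claim to excluding a pair $a\in Y$, $b\in (X\setminus\{x'\})\cup\{t'\}$ covering the cut, and your use of 3-connectivity of $G$ to force $b=t'$ is fine.

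\textbf{The gap.} You never finish the last step. You call it the main obstacle, propose to redo analogues of Claims \ref{subcla-1}--\ref{subcla-5} with $X$ and $Y$ swapped, and stop at a plan. None of that is needed, and your own item 1 already completes the argument:
\begin{itemize}
\item Once $b=t'$, every edge of $\partial_G(X)$ with an end in $X\setminus\{x'\}$ goes to $Y$, since no vertex of $X\setminus\{x'\}$ is adjacent to $t_1$ or $t_2$. Such an edge is therefore incident with $a$.
\item Every other edge of $\partial_G(X)$ is incident with $x'$.
\item Hence $\{a,x'\}$ covers $\partial_G(X)$, so $|M_G^X\cap\partial_G(X)|\le 2$, contradicting Eq.~(\ref{eq2}). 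Equivalently, $\{a,x'\}$ would be a 2-vertex cut of $G$.
\end{itemize}

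The paper makes the same point directly. It takes the at least three edges of $M_G^X\cap\partial_G(X)$ and, if $x't_i$ is among them, replaces it by the $M_G^X$-edge from the other vertex of $\{t_1,t_2\}$ into $Y$. This gives a matching of size three across $\partial_{G/T'}(Y)$, which no two vertices can cover.

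The analogy with Claim \ref{cla8} misled you at exactly this point. There, up to three edges of $M_G^X$ could join $X$ to $V(T)$. Here, all edges between $X$ and $\{t_1,t_2\}$ pass through the single vertex $x'$, so no planarity or PM-compactness argument is required.

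\textbf{Two smaller inaccuracies.}
\begin{itemize}
\item $G/T'$ is a splicing of $G'$ with $(G/T')/Y$, not with $G/\overline{X}$ itself. The multiplicity of $t'y$ may differ from that of $x'\overline{x}$, and $x'\overline{x}$ need not be a spoke. This is harmless, because parallel edges do not affect being a brick.
\item For the underlying graphs to agree, you need $t'y$ to be an edge. This holds because $t_1$ has a neighbour in $Y$; otherwise $\{x,t_2\}$ would separate $t_1$ in $G/X$.
\end{itemize}
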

   \begin{proof}
      Since $V(T')\subseteq \overline{Y}$, $G/T'$ is a splicing of $(G/T')/Y$ and $G/\overline{Y}$.
      Since $x\in V(T)$, $G/\overline{Y}=(G/X)/T=G'$. Since $G'$ is a brick, so is $G/\overline{Y}$.
      By Theorem \ref{2.1}, it suffices to show  $(G/T')/Y$ is also a brick and $\partial_{G/T'}(Y)$ cannot be covered by a vertex in $Y$ and a vertex in $(X\setminus \{x'\})\cup \{t'\}$.

      To show that $(G/T')/Y$ is a brick, we will show that $(G/T')/Y$ and $G/\overline{X}$ have the same underlying graph, i.e., an odd wheel. Since $G/X$ is a brick, $N_{G}(\{t_1,t_2\})\cap Y\neq \emptyset$.
      It follows that $(G/T')/Y$ can be obtained from $G[X]$ by adding the vertex $y$, and edge set $E_G[X\setminus\{x'\},Y]\cup \{ x'y\}$ (multiple edges allowed).
      Since $E_G[X, \{t_1,t_2\}]=E_G[\{x'\}, \{t_1,t_2\}]$, $E_G[X\setminus \{x'\}, \{t_1,t_2\}]=\emptyset$. So
      $E_G[X\setminus \{x'\}, \overline{X}]=E_G[X\setminus \{x'\}, \{t_1,t_2\}\cup Y]=E_G[X\setminus \{x'\},Y].$
      Therefore, the underlying graphs of $(G/T')/Y$ and $G/\overline{X}$ are the same.

      \begin{figure}[h]
      \centering
      \includegraphics[scale=0.35]{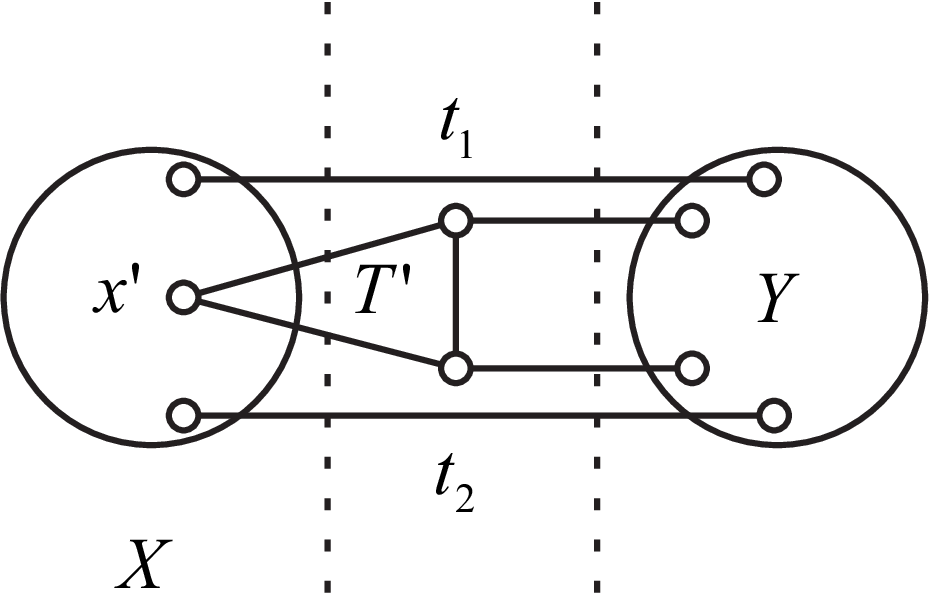}
      \caption{\label{f3-4}Illustration for the proof of Claim \ref{subcas2-1}, where $x=t_3$.}
      \end{figure}
       The three edges of $M_G^X\cap\partial_G(X)$ give a matching of size three across $\partial_{G/T'}(Y)$: if one is $x't_i$ for $i\in\{1,2\}$, replace it by the $M_G^X$-edge from the other vertex of $\{t_1,t_2\}$ to $Y$; otherwise retain all three. Thus no two vertices cover the cut.
      By Theorem \ref{2.1}, $G/T'$ is a brick.
   \end{proof}
   Next we only consider the case that
$|N_G(\{t_1,t_2\})\cap X|\geq 2$.
      \begin{Subcla}
         \label{subcas2-2}
         If $|N_G(\{t_1,t_2\})\cap X|\geq 2$, then $G/Y\rightarrow y$ is a PM-compact brick.
      \end{Subcla}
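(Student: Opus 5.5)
We follow the template of Claim \ref{subcas2-1}. Here $x=t_3$, so $\overline{Y}=X\cup\{t_1,t_2\}$ and $G/\overline{Y}=(G/X)/T=G'$ is a brick. Thus $G$ is a splicing of $G/Y\rightarrow y$ and $G'$. We aim to show three things: that $G/Y$ is a brick, that $\partial_G(Y)$ is a separating cut, and that $G/Y$ is PM-compact.

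\textbf{Structure of $G/Y$.} We first describe $G/Y$. By Claim \ref{NC-2}-type reasoning, or directly from Theorem \ref{MKCo} and the robustness of $\partial(X)$, $G[X]$ is the rim cycle $C$ of the wheel $G/\overline{X}$. The cut $\partial_G(X)$ consists of edges into $\{t_1,t_2\}\cup Y$.

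So $G/Y$ is obtained from the odd cycle $C$ by adding two adjacent vertices $t_1,t_2$ and a vertex $y$. Every vertex of $X$ sends edges only to $\{t_1,t_2,y\}$. Both $t_1$ and $t_2$ have neighbours in $X$ and in $Y$ (the latter because $G/X$ is 3-connected and $T$ is a triangle there).

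\textbf{Brick property.} To prove $G/Y$ is a brick, we use Edmonds et al.\ and check 3-connectivity and bicriticality. There are two routes.

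\emph{Route via Theorem \ref{2.1}.} Apply Theorem \ref{2.1} in reverse: $G/Y$ is itself a splicing of $G/\overline{X}$ (an odd wheel, hub $\overline{x}$) with the graph $(G/Y)/X$. The latter has vertex set $\{t_1,t_2,y,\overline{x}\}$ with edges $t_1t_2$, $t_iy$, $t_i\overline{x}$ and $y\overline{x}$. Its underlying graph is $K_4$ whenever $E_G[X,Y]\neq\emptyset$, and a $K_4$ with possible multiple edges is a brick.

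By Theorem \ref{2.1}, it then suffices to show that $\partial_{G/Y}(X)$ is not covered by one vertex of $X$ together with one vertex of $\{t_1,t_2,y\}$. Here we use the hypothesis $|N_G(\{t_1,t_2\})\cap X|\ge2$, together with the matching $M_G^X$ of Eq.\ (\ref{eq2}). Its three edges in $\partial_G(X)$ have distinct ends in $X$ and distinct ends in $\overline{X}$. After contracting $Y$, at most one of them might collapse to a pair sharing $y$.

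\emph{Case with no edge from $X$ to $Y$.} If $E_G[X,Y]=\emptyset$, then $G/Y$ is instead the graph $G[X\cup\{t_1,t_2\}]$ plus $y$ joined to $t_1,t_2$. This is impossible, since $\{t_1,t_2\}$ would then be a 2-cut of $G$ separating $Y$, which contradicts 3-connectivity. So $E_G[X,Y]\neq\emptyset$.

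\textbf{Separating cut and PM-compactness.} Once $G/Y$ is a brick, the cut $\partial_G(Y)$ has both contractions matching covered ($G'$ and $G/Y$), so it is separating. Lemma \ref{SC} then gives PM-compactness of $G/Y$ immediately.

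\textbf{Main obstacle.} The main obstacle is the covering condition. A cover by $\{a,b\}$ with $a\in X$ and $b\in\{t_1,t_2,y\}$ can be ruled out as follows.
\begin{itemize}
\item If $b=y$, then all edges from $X$ to $\{t_1,t_2\}$ go to $a$. This contradicts $|N_G(\{t_1,t_2\})\cap X|\ge2$.
\item If $b=t_1$ (symmetrically $t_2$), then $E_G[X,\{t_2\}\cup Y]$ is covered by $a$. Combined with $N_G(t_2)\cap X\neq\emptyset$, this gives $\{a,t_1\}$ as a 2-vertex cut of $G$ separating $X\setminus\{a\}$ from the rest. That contradicts 3-connectivity of $G$, since $|X|\ge5$.
\end{itemize}
The planarity and alternating-cycle arguments used earlier should not be needed here. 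If the 3-connectivity argument fails, we would fall back on the PM-compactness/facial-cycle argument of Claim \ref{subcla-1}.
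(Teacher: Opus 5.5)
Your proof is correct and follows essentially the paper's route. $G/Y$ is a splicing of the brick $G/\overline{X}$ with $(G/X)/Y$ (underlying graph $K_4$). A covering pair $\{a,b\}$ with $b\in\{t_1,t_2\}$ would be a 2-vertex cut of $G$. A covering pair $\{a,y\}$ would force $N_G(\{t_1,t_2\})\cap X\subseteq\{a\}$; here you use the hypothesis directly, which is cleaner than the paper's detour through 2-connectivity of $G[X\cup\{t_1,t_2\}]$. Then $\partial_G(Y)$ is separating and Lemma \ref{SC} gives PM-compactness.

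You should, however, delete the side claims, which are false or unjustified though unused. First, $\overline{x}$ need not be the hub of $G/\overline{X}$ in this setting, so $G[X]$ need not be a cycle; the paper's Case 2 is exactly this situation. Second, nothing prevents several edges of $M_G^X\cap\partial_G(X)$ from ending in $Y$. Neither fact matters, since Theorem \ref{2.1} only needs $G/\overline{X}$ to be a brick.
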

      \begin{proof}
      Since $X\cap Y=\emptyset$ and $Y\subseteq \overline{X}$, $G/Y$ is a splicing of $G/\overline{X}$ and $(G/Y)/X=(G/X)/Y$, whose underlying graphs are an odd wheel and $K_4$, respectively.
      If $\partial_{G/Y}(X)$ is covered by a vertex in $X$ and a vertex in $\{y,t_1,t_2\}$,
      then such two vertices form a 2-vertex cut of $G/Y$.
      Since $G$ is 3-connected, each 2-vertex cut of $G/Y$ contains the vertex $y$.
      It implies that $G[X\cup \{t_1,t_2\}]$ is not 2-connected.
      Since $N_G(t_i)\cap X\neq \emptyset$ for $i=1,2$ and $|N_G(\{t_1,t_2\})\cap X|\geq 2$, $E_G[\{t_1,t_2\},X]$ contains a matching of size two. Since $G[X]$ is 2-connected, so is $G[X\cup \{t_1,t_2\}]$, a contradiction.
      By Theorem \ref{2.1}, $G/Y$ is a brick.
      Since $G/\overline{Y}$ is also a brick, $\partial_G(Y)$ is a separating cut of $G$.
      By Lemma \ref{SC}, $G/Y$ is PM-compact.
      \end{proof}
We will consider two cases depending on whether $\overline{x}$ is the hub of the underlying wheel of $G/\overline{X}$.
\begin{Cas}
   \label{ca4-1}

   $\overline{x}$ is the hub of the underlying wheel of $G/\overline{X}$.
\end{Cas}

By Lemma \ref{WkPath}, $G/Y$ contains a spanning subgraph $H$ such that $H\cong G^{\triangle}$ (see Fig. \ref{f2-0}). It implies that $G[X]$ is a simple odd cycle with length five, say $x_1x_2x_3x_4x_5x_1$, and $yt_1t_2y$ is a triangle of $H$, which contains exactly two vertices with degree four in $H$.
By symmetry, interchange $t_1$ and $t_2$ if necessary so that $d_H(t_1)=4$.
According to the structure of $G^{\triangle}$, $t_1$, together with two vertices from $X$, forms a triangle $T_1$ of $H$.
Since $X\cup \{t_1\}\subseteq V(G)$, $T_1$ is also a triangle of $G$.
Relabel the vertices of $X$ so that $T_1=t_1x_1x_2t_1$.
      \begin{Subcla}
         \label{cla7.2}
         If $d_H(t_2)=4$, then $G[Y]$ is a simple triangle such that $G/Y$ is a brick. If $d_H(y)=4$, then $G/T_1$ is a brick.
      \end{Subcla}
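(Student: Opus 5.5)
The two assertions of Claim~\ref{cla7.2} concern the triangle $yt_1t_2y$ of $H\cong G^{\triangle}$, and I would treat both the same way: each triangle contraction is written as a splicing of two bricks, and then Theorem~\ref{2.1} is used to show the splicing is a brick. The common setup is as follows. In $G^{\triangle}$ the triangle $yt_1t_2$ has exactly two vertices of degree four. $T_1=t_1x_1x_2t_1$ is a triangle of $G$. The rim $x_1x_2x_3x_4x_5x_1$ meets the three sets $N_H(y)\cap X$, $N_H(t_1)\cap X$, $N_H(t_2)\cap X$ in paths of sizes $1,2,2$ in some order. Because $d_H(t_1)=4$, we have $|N_H(t_1)\cap X|=2$. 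Hence either $|N_H(t_2)\cap X|=2$ and $|N_H(y)\cap X|=1$ (the case $d_H(t_2)=4$), or the reverse (the case $d_H(y)=4$).

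\emph{Case $d_H(t_2)=4$.} The first task is to show that $G[Y]$ is a triangle. In $G/Y$ the vertex $y$ has exactly one neighbour in $X$ inside $H$, say $x_4$. I expect PM-compactness to force $y$ to have only the three distinct neighbours $t_1,t_2,x_4$ in $G/Y$. Suppose instead there were a further edge from $y$ to $x_3$ or $x_5$. Then one could build two disjoint alternating cycles, say $yx_5x_4y$ together with a $4$-cycle through $t_1,t_2$ and the rim, and this contradicts Prop.~\ref{PM-P2}, in the style of Lemma~\ref{WkPath}. It follows that $G'=(G/X)/Y$ has underlying graph $K_4$. Then $|Y|=|V(G')|-1=3$, and since $G'$ is a brick, $G[Y]=G'-x-t$. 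The main point is to prove this is a simple triangle. I would do this by using bicriticality of $G'$ together with $|Y|=3$: every pair in $Y$ must be joined, since otherwise $G'$ minus two suitable vertices has no perfect matching. Simplicity would come from Lemma~\ref{MF}: a multiple edge inside $Y$ would be forcing in $G$, but it lies in two perfect matchings, obtained by matching the rest through $X$ in two ways. Finally, $G/Y$ is a brick by Claim~\ref{subcas2-2}. Here I need that $|N_G(\{t_1,t_2\})\cap X|\ge 2$, which holds because $t_1,t_2$ each see two rim vertices.

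\emph{Case $d_H(y)=4$.} Here $t_2$ has a single rim neighbour in $H$, and $t_1$ has two, namely $x_1$ and $x_2$. The contraction $G/T_1$ is a splicing of $G/V(T_1)$ with a $K_4$-like graph. More usefully, I would view $\partial_G(X\cup\{t_1\})$ and the cut $\partial_G(Y)$, and write $G/T_1$ as a splicing of $(G/T_1)/\overline{X'}$ and $(G/T_1)/X'$ for $X'=(X\setminus\{x_1,x_2\})\cup\{t'\}$. Alternatively, and more directly, I would use the wheel side: $(G/T_1)/(Y\cup\{t_2\})$ should have an odd-wheel underlying graph on the new rim $t'x_3x_4x_5$ plus a hub. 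I would check this via planarity and the edge count $E[X,\overline X]$, exactly as in the proof of Claim~\ref{subcas2-1}. The other side is $G'$-like and is a brick because $G/\overline{Y}=G'$ is. For the covering condition of Theorem~\ref{2.1}, I would exhibit three disjoint cut edges coming from $H$: the edges of $H$ from $y$, $t_2$ and $t'$ to the rim. Since they form a matching of size three, no two vertices can cover the cut.

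The main obstacle I anticipate is in the first case: excluding extra edges of $G$ (not in $H$) from $Y$ to the rim. Without that, $y$ may not be a degree-three vertex, and $G[Y]$ need not be a triangle. I would first try the two-disjoint-alternating-cycles argument, using the unique embedding of the prism subdivision as in Lemma~\ref{WkPath}.
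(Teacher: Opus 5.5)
Your proposal has a genuine gap in each of the two cases.

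\emph{Case $d_H(t_2)=4$.} The step ``It follows that $G'$ has underlying graph $K_4$'' is not supported by anything you establish. (Two labelling slips: $G'=(G/X)/T=G/\overline{Y}$, not $(G/X)/Y$; and with $T_1=t_1x_1x_2t_1$ the unique rim neighbour of $y$ in $H$ is $x_3$ or $x_5$, never $x_4$.) Knowing which distinct vertices $y$ sees in $G/Y$ gives no bound on $|Y|$. Those vertices may send several edges into $Y$, and even with $d_{G'}(\overline{y})=3$ the brick $G'$ could be large.

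The missing idea is to count perfect matchings of $G'$ through forcing edges:
\begin{itemize}
\item The edges $x_3y$, $t_1y$, $t_2y$ each lie in two perfect matchings of $H$, so they are not forcing in $G/Y$.
\item Since $G/Y$ is a PM-compact brick (Claim \ref{subcas2-2}), Lemma \ref{MF} shows these edges are not multiple.
\item Since $G$ is PM-compact, Lemma \ref{GF} makes the corresponding edges at $\overline{y}$ forcing in $G'$.
\item Planarity rules out edges from $x_1$ or $x_5$ to $Y$. An extra edge $x_2y$ or $x_4y$ is handled the same way, and the two cannot coexist.
\item Hence $|\mathcal{M}(G')|=d_{G'}(\overline{y})\le 4$, and Lemmas \ref{MG3} and \ref{MG4} give underlying graph $K_4$.
\end{itemize}

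The obstacle you single out, excluding every extra edge at $y$, is neither needed nor reachable your way. Your cycle $yx_5x_4y$ is a triangle, not an even cycle. Worse, with the correct labelling, $H+x_2y$ is still PM-compact: its eight vertices cannot be split into two disjoint $4$-cycles. So no argument confined to $G/Y$ can exclude $x_2y$. The paper only needs the bound $d_{G'}(\overline{y})\le 4$.

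\emph{Case $d_H(y)=4$.} The cut you propose to splice along, $\partial_{G/T_1}(Y\cup\{t_2\})$ (equivalently $X'=\{t',x_3,x_4,x_5\}$), has shores of even size, because $|Y|=|V(G')|-1$ is odd. Theorem \ref{2.1} concerns splicings, i.e.\ odd cuts. Indeed $(G/T_1)/(Y\cup\{t_2\})$ has five vertices and its ``rim'' $t'x_3x_4x_5$ is a $4$-cycle. It is therefore not an odd wheel, and it is not even matching covered.

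The paper instead contracts three triangles in succession: $G\to G/T_1\to G_2^*\to G'$, where $G_2^*=(G/T_1)/\{t_1^*,t_2,x_3\}$ and $G'=G_2^*/\{t_2^*,x_4,x_5\}$. It then applies Theorem \ref{2.1} backwards from the brick $G'$; each step is a splicing with a graph whose underlying graph is $K_4$.

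If you prefer a single splicing, use the odd cut $\partial_{G/T_1}(Y)$, whose contractions are $G'$ and $(G/Y)/T_1$. You must then prove separately that $(G/Y)/T_1$ is a brick, for instance via its spanning subgraph $H/T_1$. You must also show that no two vertices cover this cut. Your idea of exhibiting three disjoint cut edges is the right tool for that covering condition, but only once the cut is odd.
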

      \begin{proof}
      If $d_H(t_2)=4$, then $d_H(y)=3$. Relabel the remaining vertices of $X$ so that $x_3y\in E(G/Y)$ and $t_2x_4x_5t_2$ is a triangle of $G/Y$ (see the left graph in Fig. \ref{f3-5}).
      Next we consider those edges incident with $y$ in $G/Y$.
      Since none of $x_3y$, $t_1y$ and $t_2y$ is a forcing edge of $G/Y$ and $G/Y$ is PM-compact (by Claim \ref{subcas2-2}), none of them is a multiple edge of $G/Y$ by Lemma \ref{MF}, and such three edges must be forcing edges of $G/\overline{Y}$ by Lemma \ref{GF}.
      It implies that each of $x_3$, $t_1$ and $t_2$ has only one neighbor in $Y$ in the graph $G$.
      By the planarity of $G$, neither $x_1$ nor $x_5$ has neighbors in $Y$.
      If $x_2y\in E(G/Y)$ (resp. $x_4y\in E(G/Y)$),
      then it is not a forcing edge of $G/Y$ and thus a forcing edge of $G/\overline{Y}$ by Lemma \ref{GF}.
      By Lemma \ref{MF}, $x_2y$ (resp. $x_4y$) is not a multiple edge of $G/Y$ and thus $x_2$ (resp. $x_4$) has exactly one neighbor in $Y$ for the graph $G$.
       Note that $\{x_2y,x_4y\}$ is not an edge subset of $G/Y$; otherwise, $yx_2x_3x_4y$ and $t_1x_1x_5t_2t_1$ are two vertex-disjoint even cycles containing all the vertices of $G/Y$, contradicting the PM-compactness of $G/Y$.
      By the preceding discussion, $d_{G'}(\overline{y})\leq 4$. Every edge at $\overline{y}$ is forcing, so $|\mathcal{M}(G')|=d_{G'}(\overline{y})$. Since $G'$ is a brick, Lemmas \ref{MG3} and \ref{MG4} imply that its underlying graph is $K_4$ (the triangular prism is cubic). The forcing property also excludes parallel edges in $G[Y]$. Hence $G[Y]$ is a simple triangle.
      Since $|N_G(\{t_1,t_2\})\cap X|\geq 2$, $G/Y$ is a brick by Claim \ref{subcas2-2}.

      \begin{figure}[h]
         \centering
         \includegraphics[scale=0.3]{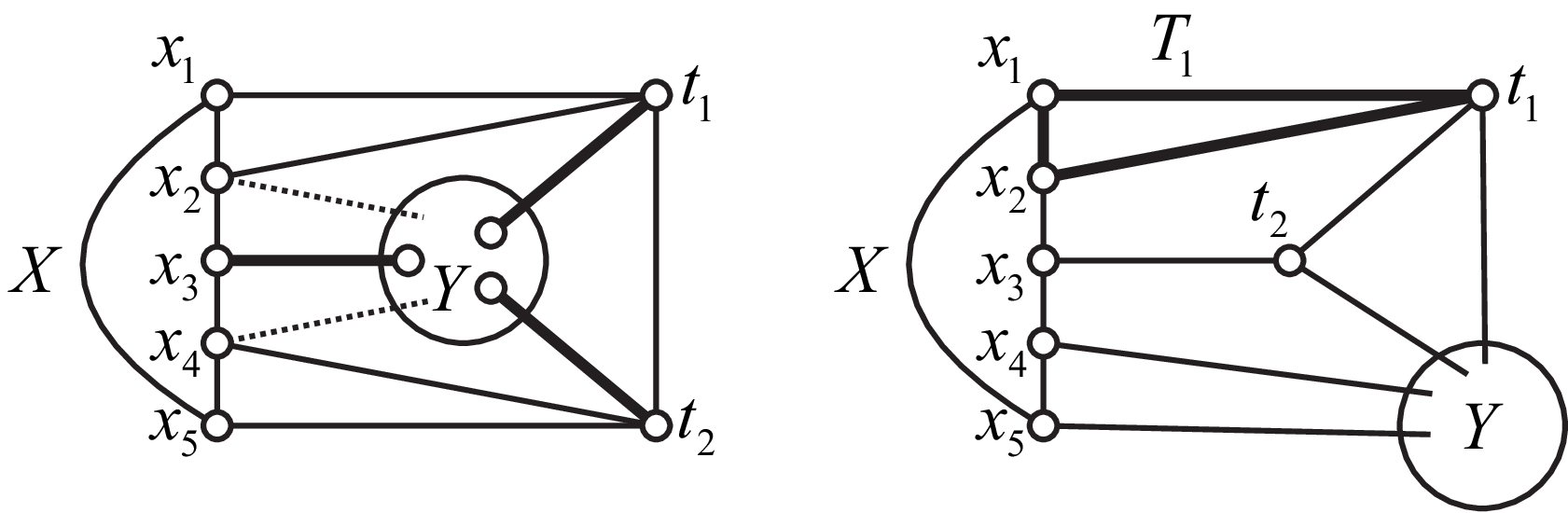}
         \caption{\label{f3-5}Illustration for the proof of Claim \ref{cla7.2}.}
      \end{figure}

      If $d_H(y)=4$, then $d_H(t_2)=3$. Relabel the remaining vertices of $X$ so that $x_3t_2\in E(G/Y)$ and $yx_4x_5y$ is a triangle of $G/Y$ (see the right graph in Fig. \ref{f3-5}).
      We can contract triangles three times from $G$ to obtain $G/\overline{Y}=G'$: let $G_1^*=G/T_1\rightarrow t_1^*$ and $G_2^*=G_1^*/\{t_1^*,t_2,x_3\}\rightarrow t_2^*$. Then $G'=G_2^*/\{t_2^*,x_4,x_5\}\rightarrow x$.
      Since $G'$ is a brick, it can be checked that both $G_2^*$ and $G/T_1$ are also bricks by Theorem \ref{2.1}.
   \end{proof}
      \begin{Cas}
         \label{ca4-2}

         $\overline{x}$ is not the hub of the underlying wheel of $G/\overline{X}$.
      \end{Cas}
      In this case, $\partial_G(X)$ is covered by exactly three vertices $x_1$, $x_2$ and $x_3$ in $X$, and
      we may relabel the vertices so that $x_2$ is the hub of the underlying wheel of $G/\overline{X}$.
      Since $G/Y$ is a brick (by Claim \ref{subcas2-2}) and $\partial_{G/Y}(X)$ is nontrivial, $\partial_{G/Y}(X)$ is not a tight cut of $G/Y$.
      Since $\partial_{G/Y}(X)=\partial_G(X)$, $G/Y$ contains a perfect matching $M_{G/Y}^X$ such that $|M_{G/Y}^X\cap \partial_{G/Y}(X)|=3.$
      Recall that $M_{G/X}^T$ is a perfect matching of $G/X$ such that $|M_{G/X}^T\cap \partial_{G/X}(V(T))|=3$ and $x=t_3$.
      Let $M_{G/X}^T\cap \partial_{G/X}(V(T))=\{xy_0, t_1y_1, t_2y_2\}$, where $y_i\in Y$ for $i=0,1,2$.
      Since $xy_0\in E(G/X)$ and $\partial_G(X)$ is covered by $\{x_1,x_2,x_3\}$,
       the edge $xy_0$ corresponds to $x_1y_0$, $x_2y_0$, or $x_3y_0$ in $G$.

      \begin{Subcla}
         \label{Subcla2.4}
         For the hub $x_2$ of the underlying wheel of $G/\overline{X}$, we have $x_2y\in M_{G/Y}^X$.
      \end{Subcla}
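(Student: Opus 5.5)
The approach is to pin down the shape of $M_{G/Y}^X\cap\partial_{G/Y}(X)$ first, and then rule out every alternative to $x_2y$ using the PM-compactness of $G/Y$ (Claim \ref{subcas2-2}).

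\textbf{Step 1: the three cut edges form a bijection.} In $G/Y$ the vertex set $\overline{X}$ is exactly $\{y,t_1,t_2\}$. The three edges of $M_{G/Y}^X\cap\partial_{G/Y}(X)$ are pairwise disjoint, and $\partial_G(X)$ is covered by $\{x_1,x_2,x_3\}$. Hence these three edges match $\{x_1,x_2,x_3\}$ bijectively onto $\{y,t_1,t_2\}$. Because $|X|$ is odd and $G/\overline{X}$ has underlying graph an odd wheel with hub $x_2$ and $\overline{x}$ on the rim, $G[X]$ consists of the hub $x_2$ together with the rim path $x_1a_1a_2\dots a_{2m}x_3$, where $2m=k-3\geq 2$. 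The rest of $M_{G/Y}^X$ therefore matches $a_1a_2,\dots,a_{2m-1}a_{2m}$.

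\textbf{Step 2: the contrary assumption and a prism subdivision.} Suppose that $x_2y\notin M_{G/Y}^X$. By the symmetry between $t_1$ and $t_2$, we may assume $x_2t_1\in M_{G/Y}^X$. Then $y$ is matched to $x_1$ or to $x_3$, and by the reflection of the rim we may assume $x_1y,x_3t_2\in M_{G/Y}^X$. Now $x_1a_1\dots a_{2m}x_3x_2x_1$ is an odd cycle, $yt_1t_2y$ is a triangle of $(G/Y)/X$, and they are joined by three matching edges. So, exactly as in Claim \ref{nc1} and in the graphs $G_p^j$, we obtain a subdivision of the triangular prism inside $G/Y$. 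By Corollary \ref{Bi3} it has a unique plane embedding, and it has three $M_{G/Y}^X$-alternating facial cycles.

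\textbf{Step 3: two disjoint alternating cycles.} One alternating cycle is
\[
Q=x_3a_{2m}\dots a_1x_1\,y\,t_2\,x_3 ,
\]
which avoids $x_2$ and $t_1$. The goal is an $M_{G/Y}^X$-alternating cycle through $x_2t_1$ that is disjoint from $Q$; this would contradict Prop. \ref{PM-P2}. Such a cycle would have to leave $t_1$ along a non-matching edge towards $x_2$. I would obtain it from the extra structure available here. First, the hub $x_2$ is adjacent to every $a_i$. Second, the edges at $y$ are governed by Lemma \ref{GF}: the rim edges $\overline{x}x_1$ and $\overline{x}x_3$ are not forcing in $G/\overline{X}$ when $k\geq 5$, so the corresponding edges $xy$ must be forcing in $(G/X)/Y$. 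Together with Lemma \ref{MF}, this restricts the multiplicities and neighbourhoods of $y$, $t_1$ and $t_2$. If this route stalls, I would instead re-route $Q$ through the hub, replacing it by $x_2a_ja_{j+1}\dots$, and pair it with a cycle through $y$, $t_1$ and $t_2$. I would also use the perfect matching $M_{G/X}^T$ with $t_3=x$ and $t_1y_1,t_2y_2\in M_{G/X}^T$ to find an alternating path in $G$ from $t_1$ back to $X$ through $Y$, as was done with $P_t$ in Claim \ref{subcla-1}.

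\textbf{Step 4: the remaining case.} If instead $x_2t_2$ and $x_3y$ are in the matching, the same argument applies after interchanging the roles of $x_1$ and $x_3$ in the embedding. Every assignment other than $x_2y\in M_{G/Y}^X$ is then excluded.

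\textbf{Main obstacle.} I expect Step 3 to be the hard part. With only three cut edges, the three facial alternating cycles of the prism subdivision pairwise intersect, so, unlike in Claim \ref{nc1}, disjointness does not come for free. The second cycle has to be extracted from the hub's spokes, or from $Y$ via $M_{G/X}^T$, and I would try the planarity-plus-$P_t$ route first.
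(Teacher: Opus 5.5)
Steps 1, 2 and 4 are fine and agree with the paper, which also reduces to $\{x_1y,x_2t_1,x_3t_2\}\subseteq M_{G/Y}^X$. Step 3, which carries all the content, fails as planned. Your cycle $Q$ covers every vertex of $G/Y$ except $x_2$ and $t_1$, so an alternating cycle disjoint from $Q$ inside $G/Y$ could only be a digon on $x_2t_1$.

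More decisively, no argument confined to $G/Y$ can give a contradiction. This covers prism subdivisions and their facial cycles, Lemmas \ref{GF} and \ref{MF} applied to the splicing $G/Y$, and rerouting through the hub. The reason is that the configuration occurs in a planar PM-compact brick. Splice $W_5$ with hub $x_2$ and rim $\overline{x}x_1a_1a_2x_3$ at $\overline{x}$ with $K_4$ on $\{x,y,t_1,t_2\}$ at $x$, so that the cut edges are $x_1y$, $x_2t_1$, $x_3t_2$. This 8-vertex graph is planar. It is a brick by Theorem \ref{2.1}, since the cut edges form a matching of size three. It is PM-compact, since two disjoint even cycles in it would be two 4-cycles, and every 4-cycle passes through $x_2$. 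Yet $\{x_1y,x_2t_1,x_3t_2,a_1a_2\}$ is a perfect matching meeting the cut in three edges, and $x_2y$ is not even an edge. So the contradiction must come from the PM-compactness of $G$ itself, using the structure inside $Y$.

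That is what the paper does. Your last fallback, alternating paths through $Y$ built from $M_{G/X}^T$, points that way, but the actual steps are missing. Write $M_{G/X}^T\cap\partial_{G/X}(V(T))=\{xy_0,t_1y_1,t_2y_2\}$.

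First, the paper shows that $xy_0$ comes from $x_2y_0$. We have $x_3y_0\notin E(G)$, because $x_3\overline{x}$ is a rim edge and hence not multiple, so $t_2$ is the only neighbour of $x_3$ outside $X$. We also have $x_1y_0\notin E(G)$. Otherwise take a perfect matching $M_0$ of $G[Y]-y_0$; this exists because $G[Y]$ is factor-critical, as $G/\overline{Y}$ is bicritical. The $t_1$--$t_2$ path $P_t$ of $M_{G/X}^T\oplus M_0$ gives the even cycle $P_t+t_1t_2$. Together with a Hamilton cycle of $G[X]-x_1$, it leaves the matchable remainder $\{x_1y_0\}\cup(M_0\setminus E(P_t))$, a contradiction.

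Then $x_1$ has a unique neighbour $y_3$ outside $X$, with $y_3\in Y$ and $y_3\neq y_0$. Take a perfect matching $M_3$ of $G[Y]-y_3$. Apart from cycles, $M_{G/X}^T\oplus M_3$ consists of two disjoint paths in $G[Y]$ pairing up $y_0,y_1,y_2,y_3$. These four vertices lie in cyclic order on the boundary of the face of $G[Y]$ containing $G[\overline{Y}]$, which forces the non-crossing pairings $\{y_0,y_3\},\{y_1,y_2\}$ or $\{y_2,y_3\},\{y_0,y_1\}$. In each case, close the two paths through $X$ and $T$, using the spanning path of $G[X]$ from $x_1$ to $x_2$ or of $G[X]-x_2$ from $x_1$ to $x_3$. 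This yields two disjoint even cycles of $G$ whose removal leaves a perfect matching.

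None of these ingredients appears in your plan: the vertex $y_0$, the factor-criticality of $G[Y]$, and the planar pairing of the path ends.
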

      \begin{proof}
         Suppose that $x_2y\notin M_{G/Y}^X$. By symmetry, after relabelling, $\{x_1y,x_2t_1,x_3t_2\}\subseteq M_{G/Y}^X$.
         First we will show that $x_2y_0$ (in $G$) corresponds to the edge $xy_0$ (in $G/X$).
         Since $x_3\overline{x}$ is not a spoke of the underlying wheel of $G/\overline{X}$,
         it is not a multiple edge of $G/\overline{X}$.
         Since $x_3t_2\in E(G)$, $N_G(x_3)\cap \overline{X}=\{t_2\}$ and thus $x_3y_0\notin E(G)$.
         Since $G/\overline{Y}=(G/X)/T$, $G/\overline{Y}$ is bicritical and thus $G[Y]$ is factor-critical (the deletion of each vertex results in a matchable graph).
         Let $M_0$ be the perfect matching of $G[Y]-y_0$.
         Then $M_{G/X}^T\oplus M_0$ contains an alternating path $P_t$ with end vertices $t_1$ and $t_2$.
         Since the edges of $P_t$ incident with its end vertices belong to $M_{G/X}^T$, $|E(P_t)|$ is odd and thus $P_t+t_1t_2$ is an even cycle.
         Since $\overline{x}$ and $x_1$ are two adjacent vertices on the rim of $G/\overline{X}$, $G[X]-x_1$ has a Hamilton cycle, say $C$.
         If $x_1y_0\in E(G)$ (see the left graph in Fig. \ref{f3-6}), then $C$ and $P_t+t_1t_2$ are two vertex-disjoint even cycles whose removal leaves a subgraph that has a perfect matching $\{x_1y_0\}\cup (M_0\setminus E(P_t))$, a contradiction to the PM-compactness of $G$.
         So $x_1y_0\notin E(G)$ and thus $x_2y_0$ corresponds to the edge $xy_0$.
         \begin{figure}[h]
            \centering
            \includegraphics[scale=0.3]{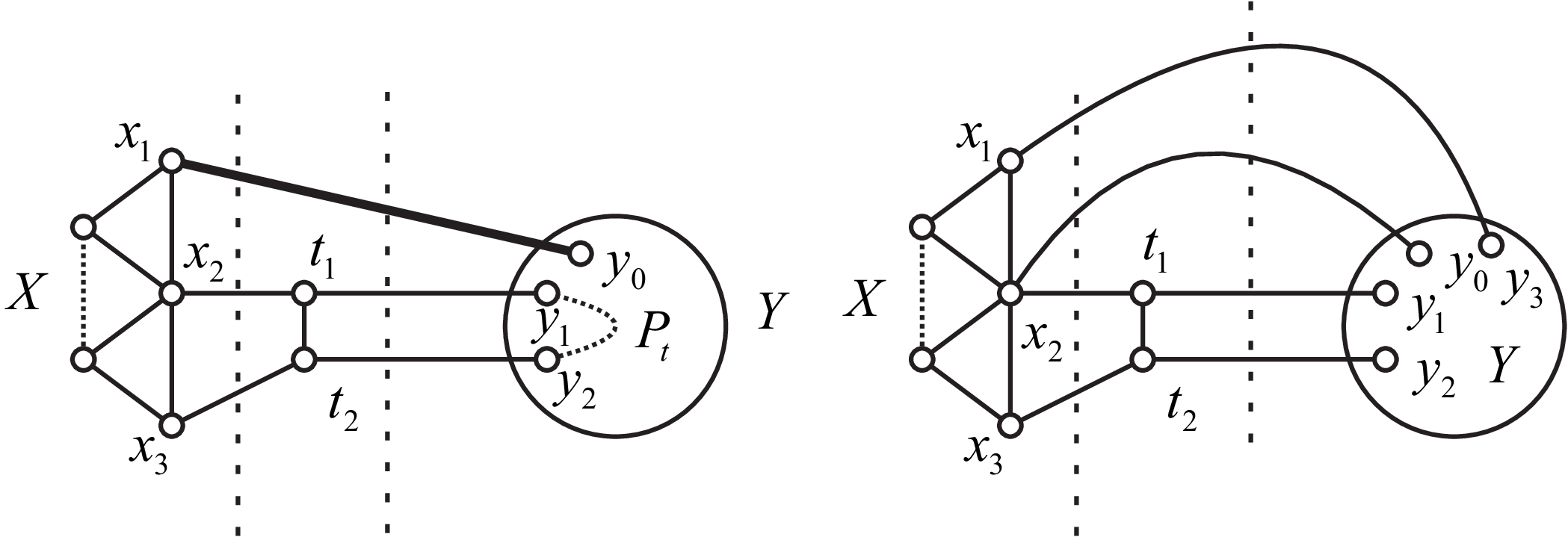}
            \caption{\label{f3-6}Illustration for the proof of Claim \ref{Subcla2.4}.}
         \end{figure}

         As $x_1\overline{x}$ is not a spoke of the underlying wheel of $G/\overline{X}$, it is not a multiple edge of $G/\overline{X}$.
         Since $x_1y\in E(G/Y)$, the unique neighbor of $x_1$ belongs to $Y$, say $y_3$ (see the right graph in Fig. \ref{f3-6}).
       Since $G/\overline{Y}$ is a brick, it is 3-connected and thus $G[Y]$ is 2-connected. Since $G$ is planar, the boundary of every face of $G[Y]$ is a cycle. The subgraph $G[\overline{Y}]$ is embedded in a face of $G[Y]$; let $F$ be the boundary cycle of this face.
       We may assume that $y_0$, $y_1$, $y_2$ and $y_3$ appear on $F$ in anticlockwise order (allowing $y_3=y_2$).
      Since $x_1y_0\notin E(G)$ and $x_1y_3\in E(G)$, $y_3\neq y_0$.
      Let $M_3$ be a perfect matching of $G[Y]-y_3$.
       Then $M_{G/X}^T\oplus M_3$ contains two vertex-disjoint alternating paths $P_1$ and $P_2$ in $G[Y]$,
      whose end vertices belong to $\{y_0,y_1,y_2,y_3\}$.
      Since the alternating path containing $y_3$ has the edges incident with its end vertices belonging to $M_{G/X}^T$ and $M_3$ respectively, say $P_1$, the number of its edges is even.
      Since $P_1$ and $P_2$ are disjoint and $G[Y]$ is planar, another end vertex of $P_1$ is either $y_0$ or $y_2$ (if $y_3=y_2$, then $P_1$ is a single vertex $y_2$).
      Since $y_3\notin V(P_2)$, both end vertices of $P_2$ belong to $M_3$ and thus $|E(P_2)|$ is odd.
      Let $P_x'$ be the rim path of the underlying wheel of $G/\overline{X}$ from $x_1$ to $x_3$ avoiding $\overline{x}$, and set $P_x=P_x'+x_3x_2$. Then $P_x'$ spans $G[X]-x_2$ and has an odd number of edges, whereas $P_x$ spans $G[X]$, runs from $x_1$ to $x_2$, and has an even number of edges.
      If the end vertices of $P_1$ are $y_0$ and $y_3$, then those of $P_2$ are $y_1$ and $y_2$. The cycles $C_1=P_x+x_1y_3+P_1+y_0x_2$ and $C_2=P_2+y_1t_1t_2y_2$ are vertex-disjoint and even. Deleting them leaves a subgraph with the perfect matching $M_3\setminus(E(P_1)\cup E(P_2))$.
      If the end vertices of $P_1$ are $y_2$ and $y_3$, then the end vertices of $P_2$ are $y_0$ and $y_1$.
       The cycles $P_x'+x_1y_3+P_1+y_2t_2x_3$ and $P_2+y_1t_1x_2y_0$ are vertex-disjoint and even. Deleting them leaves a subgraph with the perfect matching $M_3\setminus(E(P_1)\cup E(P_2))$.
      This contradicts the PM-compactness of $G$.
      \end{proof}

      \begin{Subcla}
         \label{cla7.3}
         $G[Y]$ is a simple triangle such that $G/Y$ is a brick.
      \end{Subcla}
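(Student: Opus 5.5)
The plan is to follow the template of the first half of Claim \ref{cla7.2}. Write $\bar y$ for the contraction vertex of $G'=G/\overline{Y}$. Since we are in the case $|N_G(\{t_1,t_2\})\cap X|\geq 2$, Claim \ref{subcas2-2} already gives that $G/Y$ is a PM-compact brick. So the real content is that $G[Y]$ is a simple triangle, i.e.\ that $G'$ is exactly $K_4$. For this I would show three things: every edge of $G'$ at $\bar y$ is forcing in $G'$; there are exactly three such edges; and then Lemma \ref{MG3} applies.

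\textbf{Step 1: the cut $\partial_G(Y)$.} By Claim \ref{Subcla2.4}, $x_2y\in M_{G/Y}^X$. Hence the other two edges of $M_{G/Y}^X\cap\partial_G(X)$ join $x_1$ and $x_3$ to $t_a$ and $t_b$, where $\{a,b\}=\{1,2\}$. The edges $x_1\overline{x}$ and $x_3\overline{x}$ are rim edges of the underlying wheel of $G/\overline{X}$, so by Lemma \ref{MF} they are not multiple. Therefore $N_G(x_1)\cap\overline{X}=\{t_a\}$ and $N_G(x_3)\cap\overline{X}=\{t_b\}$. Since $\partial_G(X)$ is covered by $\{x_1,x_2,x_3\}$, every edge of $\partial_G(Y)$ has its end outside $Y$ in $\{x_2,t_1,t_2\}$. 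Consequently every edge of $G/Y$ at $y$ is of the form $x_2y$, $t_1y$ or $t_2y$, possibly with multiplicity.

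\textbf{Step 2: none of $x_2y,t_1y,t_2y$ is forcing in $G/Y$.} For each $u\in\{x_2,t_1,t_2\}$ I would exhibit two perfect matchings of $(G/Y)-u-y$.
\begin{itemize}
\item For $u=t_1$: match $t_2$ to its unique $X$-neighbour among $x_1,x_3$, say $x_3$. What remains is $G[X]-x_3$, which consists of the hub $x_2$ together with an even rim path of length at least $4$ (because $k\geq 5$). Every vertex of that path is adjacent to $x_2$, so this graph has at least two perfect matchings.
\item For $u=t_2$: the same argument applies with $t_1$ in place of $t_2$.
\item For $u=x_2$: the remaining graph is $G[X\setminus\{x_2\}]$ together with $t_1,t_2$. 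Here $G[X\setminus\{x_2\}]$ is a rim path, and the vertices $t_1,t_2$ are attached to $x_1$, $x_3$ and to each other. A perfect matching can be found using the edge $t_1t_2$ together with a perfect matching of the path, and another one using $x_1t_a$, $x_3t_b$. I expect the parity bookkeeping here to need care, and this is the main obstacle. If it fails, I would fall back on Proposition \ref{PM-P2}, producing two disjoint alternating cycles in the style of Claim \ref{Subcla2.4}.
\end{itemize}

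\textbf{Step 3: conclude.} By Lemma \ref{MF}, a non-forcing edge is not a multiple edge, so $x_2y$, $t_1y$ and $t_2y$ are simple edges of $G/Y$. By Lemma \ref{GF}, applied to $G\in (G/Y)(y)\odot G'(\bar y)$, the corresponding edges at $\bar y$ are forcing in $G'$. Simplicity means each of $x_2,t_1,t_2$ has exactly one neighbour in $Y$. Hence $d_{G'}(\bar y)=3$ and every edge at $\bar y$ is forcing, so $|\mathcal{M}(G')|=3$. Since $G'$ is a brick, $\delta(G')\geq3$ and $|V(G')|\geq 4$, so Lemma \ref{MG3} yields $G'\cong K_4$. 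It follows that $G[Y]=G'-\bar y$ is a simple triangle and that $G/Y$ is a brick.
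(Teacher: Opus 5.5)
Your proposal is correct and is essentially the paper's proof. Claim~\ref{Subcla2.4} and the non-multiplicity of the rim edges $x_1\overline{x}$ and $x_3\overline{x}$ confine the edges at $y$ to $x_2y,t_1y,t_2y$. Lemma~\ref{MF} then makes these simple, Lemma~\ref{GF} makes the edges at $\overline{y}$ forcing in $G'$, and Lemma~\ref{MG3} yields $G'\cong K_4$.

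Your Step 2 adds an explicit check of the non-forcing property, which the paper only asserts. The parity there works, so no fallback is needed. The rim path $G[X]-x_2$ has $k-1$ vertices, and deleting $x_1,x_3$ leaves $k-3$; both are even. In the $t_1$ case the path has length $k-3\geq 2$, not necessarily $4$, which still gives two perfect matchings.
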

      \begin{proof}
         By Claim \ref{Subcla2.4}, $x_2y\in M_{G/Y}^X$.
         By symmetry, interchange $t_1$ and $t_2$ if necessary so that $M_{G/Y}^X\cap \partial_{G/Y}(X)=\{x_1t_1,x_3t_2,x_2y\}$.
         For the graph $G/Y$, we consider all edges incident with $y$. Note that $N_{G/Y}(y)\subseteq \{x_1,x_2,x_3,t_1,t_2\}$.
         By the discussion in Claim \ref{Subcla2.4}, each of $x_1$ and $x_3$ has a unique neighbor in $\overline{X}$, respectively, i.e., $t_1$ and $t_2$.
         Thus $x_1y\notin E(G/Y)$ and $x_3y\notin E(G/Y)$.
         Since $\{t_1y_1, t_2y_2\}\subseteq M_{G/X}^T$ and $x_2y\in M_{G/Y}^X$,
         $\{t_1y,t_2y,x_2y\}\subseteq E(G/Y)$.
         Since none of the edges in $\{t_1y,t_2y,x_2y\}$ is a forcing edge of $G/Y$ and $G/Y$ is PM-compact (by Claim \ref{subcas2-2}),
         none of them is a multiple edge of $G/Y$ by Lemma \ref{MF}, i.e., each vertex in $\{t_1,t_2,x_2\}$ has a unique neighbor in $Y$ for the graph $G$.
          By Lemma \ref{GF}, the three corresponding edges incident with the contraction vertex $\overline{y}$ in $G/\overline{Y}$ are forcing. They are all its incident edges, and every perfect matching contains exactly one of them. Hence $G/\overline{Y}$ has exactly three perfect matchings.
         Since $G/\overline{Y}$ is a brick, $G/\overline{Y}\cong K_4$ by Lemma \ref{MG3}, and thus $G[Y]$ is a simple triangle.
         Since $|N_G(\{t_1,t_2\})\cap X|\geq 2$, $G/Y$ is a brick by Claim \ref{subcas2-2}.
      \end{proof}

       If $|N_G(\{t_1,t_2\})\cap X|=1$, then $G$ contains a triangle $T'$ such that $G/T'$ is a brick by Claim \ref{subcas2-1}.
     When $|N_G(\{t_1,t_2\})\cap X|\geq 2$, if the contraction vertex $\overline{x}$ is the hub of the underlying wheel of $G/\overline{X}$,
      then either $G[Y]$ is a simple triangle such that $G/Y$ is a brick, or $G$ contains a triangle $T_1$ such that $G/T_1$ is a brick by Claim \ref{cla7.2};
      otherwise, $G[Y]$ is a simple triangle such that $G/Y$ is a brick by Claim \ref{cla7.3}. Therefore, the result follows.
\qed

\section*{Acknowledgements}
This work was supported by
the National Natural Science Foundation of China (Nos. 12271235, 12671409 and 12601685),
the Startup Research Fund for Introduced Talents of Sichuan Normal University (No. kyqd20260308),
the Natural Science Foundation of Fujian Province (No. 2026J002034),
the Institute of Meteorological Big Data-Digital Fujian,
and Sichuan Science and Technology Program (No. 2026NSFSC2032).

\par\medskip
\noindent\begin{minipage}{\linewidth}
\textbf{Conflict of interest}\par\smallskip
The authors declare that they have no conflict of interest.
\end{minipage}

\par\medskip
\noindent\begin{minipage}{\linewidth}
\textbf{Data availability}\par\smallskip
No datasets were generated or analysed during the current study.
\end{minipage}

\par\medskip
\noindent\begin{minipage}{\linewidth}
\textbf{Declaration on the use of AI tools}\par\smallskip
No AI tools have been used in the generation of the mathematical ideas of this paper. AI tools assisted with manuscript preparation and review. The authors take full responsibility for the mathematical content.
\end{minipage}

\end{document}